\documentclass[sn-basic]{sn-jnl}
% class needs these packages
\usepackage{manyfoot}%gobble
\usepackage{xcolor}%artcatboxgraycmyk0.0,0.0,0.0,0.30
\usepackage{amsmath,amssymb,amsfonts}%

\usepackage[utf8]{inputenc}

\usepackage{graphicx}
\usepackage{mathtools}
\usepackage{amsfonts}
\usepackage[shortlabels]{enumitem}
\usepackage{color}
\usepackage{comment}
\usepackage{dsfont}
\usepackage[normalem]{ulem}
\usepackage{lmodern}
\usepackage{url}

\usepackage{ulem}

\usepackage{amsthm}
\newtheorem{theorem}{Theorem}[section]
\newtheorem{proposition}[theorem]{Proposition}
\newtheorem{lemma}[theorem]{Lemma}
\newtheorem{corollary}[theorem]{Corollary}
\newtheorem{remark}[theorem]{Remark}
\newtheorem{definition}[theorem]{Definition}
\newtheorem{assumption}[theorem]{Assumption}

\newcommand{\Ad}{\operatorname{Ad}}
\newcommand{\grad}{\operatorname{grad}}

\newcommand{\msf}{\mathsf}
\newcommand{\mf}{\mathfrak}
\newcommand{\mc}{\mathcal}
\newcommand{\mbb}{\mathbb}
\newcommand{\mbf}{\mathbf}
\newcommand{\mds}{\mathds}

\newcommand{\SU}{\mathrm{SU}}
\newcommand{\U}{\mathrm{U}}
\newcommand{\tr}{\operatorname{tr}}
\newcommand{\id}{\mds{1}}
\newcommand{\T}{{\sf T}}

\begin{document}

\title[Article Title]{Randomized Gradient Descents on Riemannian Manifolds: Almost Sure Convergence to Global Minima in and beyond Quantum Optimization}

\author[1,2]{\fnm{Emanuel} \sur{Malvetti}}\email{emanuel.malvetti@tum.de}

\author[3]{\fnm{Christian} \sur{Arenz}}\email{christian.arenz@asu.edu}

\author[4]{\fnm{Gunther} \sur{Dirr}}\email{dirr@mathematik.uni-wuerzburg.de}

\author[1,2]{\fnm{Thomas} \sur{Schulte-Herbrüggen}}\email{tosh@tum.de}

\affil[1]{\orgdiv{School of Natural Sciences}, \orgname{Technische Universit\"at M\"unchen}, \orgaddress{\city{Garching}, \postcode{85748}, \country{Germany}}}

\affil[2]{\orgname{Munich Center for Quantum Science and Technology (MCQST) \& Munich Quantum Valley (MQV)}, \orgaddress{\city{M\"unchen}, \postcode{80799}, \country{Germany}}}

\affil[3]{\orgdiv{School of Electrical, Computer and Energy Engineering}, \orgname{Arizona State University}, \orgaddress{\city{Tempe}, \postcode{85287}, \state{Arizona}, \country{USA}}}

\affil[4]{\orgdiv{Institute of Mathematics}, \orgname{University of W\"urzburg}, \orgaddress{\city{W\"urzburg}, \postcode{97074}, \country{Germany}}}

\abstract{
We analyze convergence of gradient-descent methods on Riemannian manifolds.  
In particular, we study randomization of Riemannian gradient algorithms for minimizing smooth cost functions (of Morse--Bott type). 
We prove that randomized gradient descent methods, where the Riemannian gradient is replaced by a random projection of it,
converge to a single local optimum almost surely despite the existence of saddle points. 
We consider both uniformly distributed and discrete random projections.
We also discuss the time required to pass a saddle point.
As a major application, we consider ground-state preparation through quantum optimization over the unitary group. 
In mathematical terms our randomized algorithm applied to the trace function $U \to \tr(AU\rho U^*)$ almost surely converges to its {\em global minimum}. 
The minimum corresponds to the smallest eigenvalue (ground state) of the selfadjoint operator $A$ (Hamiltonian) if $\rho$
is a rank-one projector (pure state). 
In this setting, one can efficiently replace the uniform random projections by implementing so-called discrete unitary 2-designs.}

\keywords{Riemannian gradient system, randomized gradient descent, Morse--Bott systems, saddle-point escape,  quantum optimization} 
\pacs[MSC Classification]{
65K10, % Optimization and variational techniques
53B21, % Methods of Riemannian geometry
37H10 % Generation, random and stochastic difference and differential equations
}

\maketitle

\section{Introduction} \label{sec:intro}

Both gradient systems and numerical linear algebra come with a long history, as well as a vibrant research activity. 
For instance, gradient systems have been advanced lately from Hilbert manifolds~\cite{NeubergerLNM2010} to general metric spaces (e.g., in the context of optimal mass transport~\cite{ambrosio2021lectures, mielke2023introduction}). 
In contrast, numerical linear algebra has faced factorization problems of large tensor structures~\cite{batselier2018computing} (tensor SVD) using ``randomization strategies'' for their classical algorithms in order to handle these huge amounts of data.
A few decades ago, a systematic study of their interplay started by analyzing the QR-algorithm and interior point methods from a (Riemannian) geometric point of view, cf.\ the collection of~\cite{Bloch94}.
In this development, the seminal work of~\cite{Bro88+91, Bro89} on the so-called ``double-bracket flow'' on orbits of the orthogonal group (and likewise of the unitary group in~\cite{Bro93}), followed by independent similar ideas of~\cite{ChuDriessel90}, has sparked a plethora of applications.
For an early overview on optimization techniques on Riemannian manifolds (including higher-order methods) see~\cite{Smith94} and \cite{Bloch94}.
A detailed mathematical account can be found in the monograph by~\cite{HM94}, including discretization schemes, suitable step sizes, and proofs of convergence. 
For the latter, the authors heavily exploited the Morse--Bott structure of the respective cost functions (i.e.~the particular structure of the Hessian at possibly non-isolated critical points, cf.\ Subsec.~\ref{subsec:Morse-Bott} here and~\cite{Duistermaat83}) to obtain convergence to a single critical point.
Based on these developments, higher-order methods with applications to a variety of cost functions on classical matrix manifolds are treated in~\cite{AMS08}
or, more recently, by~\cite{Boumal23}.
In~\cite{SGDH08}, the above ideas have been extended to flows and their discretization schemes on more general homogeneous spaces. 
There, the authors also established connections to applications in quantum dynamics as well as to applications in numerical (multi-)linear algebra, such as higher-rank tensor approximations --- as elaborated by~\cite{CDH2012} to complement standard methods (such as higher-order powers of~\cite{LMV-II} or quasi-Newton methods of~\cite{SL10}). 

Independently, in the physics community, \cite{Weg94} (re)devised gradient flows of Brockett type, e.g., to (band-)diagonalize Hamiltonians, which the monograph by~\cite{Kehr06} elaborated to address further quantum many-body applications. 

Often, gradient-flow approaches for deriving optimization schemes on Riemannian manifolds hinge on the computability of the Riemannian exponential, ensuring to take the gradient from the tangent space back to the manifold. 
To be precise, this is crucial whenever the manifold cannot be identified with its tangent spaces, such as for the unitary group and its orbits.
But, in particular in this case, analytical expressions and
efficient numerical approximations for the Riemannian exponential are well known.

With these stipulations, Riemannian optimization constitutes a versatile toolbox, since  cost functions can readily be tailored to the particular application such as finding extremal eigen- or singular values.
These instances also connect to other branches of mathematics such as numerical and $C$-numerical ranges and their extremal values, the numerical and $C$-numerical radii (see~\cite{GR-97, Li94}). 

\smallskip

{Over the last decade, techniques from Riemannian optimization have also been adopted in quantum information science.
For example, in quantum computing, hybrid quantum-classical algorithms \cite{bharti2022noisy}, such as variational quantum
algorithms (VQAs) \cite{cerezo2021variational, tilly2022variational}, have been developed to solve ground-state problems
\cite{peruzzo2014variational},
combinatorial optimization problems \cite{farhi2014quantum}, and quantum machine learning problems \cite{biamonte2017quantum}. 
\cite{wiersema2023optimizing} proposed projecting the Riemannian gradient into smaller dimensional subspaces
to obtain scalable\footnote{Sec.~\ref{sec:gs-opt} gives a precise meaning of \/`scalable quantum computers\/'} quantum-computer implementations. However, the 
``dimension reduction'' strategy 
comes
at a cost:
{While 
the full gradient flow converges almost surely to 
some local minimum
in spite of strict saddle points\footnote{i.e., saddle points at which the
Hessian has at least one negative eigenvalue} \cite{Lee19}, for the projected versions no such guarantees can be made.
Indeed, numerical simulations see convergence
to artificial local minima induced by the projection
into a {\em fixed} subspace.} 
Inspired by the success of classical randomized gradient descent \cite{ruder2016overview}, the 
problem {of artificial minima} has recently been {circumvented} by some of the authors in \cite{magann2023randomized} via ``random projections''
of the Riemannian gradient.
(Details on different
optimization algorithms and their efficient implementations on {\em quantum} computers are postponed to Section \ref{sec:gs-opt}.)}

\smallskip

In this paper we consider the randomized gradient algorithm presented in~\cite{magann2023randomized} in the more general setting of {the large class\footnote{{Indeed, on a compact manifold, the set of Morse functions, which is a subset of the Morse--Bott functions, forms an open dense subset of the space of smooth functions~\cite{Audin14}. In this sense, Morse--Bott functions are generic.}} of Morse--Bott cost functions} on Riemannian manifolds.
For both continuous and discrete probability distributions of the random gradient directions, we prove that in fact the algorithm almost surely converges to a local minimum.
{This is a significant improvement on} the state-of-the-art, where thus far (see~\cite{gutman}) only convergence to the {\em set} of critical points has been proven without the Morse--Bott assumption.
{Related papers have established convergence to local optima for perturbed and stochastic gradient descent~\cite{JinStoch21}, also generalized to Riemannian manifolds~\cite{criscitiello2019} and~\cite{Sun19}.}
Finally, we show how our result can be applied to quantum optimization tasks, such as ground state preparation: 
{It is by construction that} in the specific quantum setting, there are no local optima, whereby we prove that our randomized gradient algorithm then converges almost surely to {a single {\em global} minimum}.  

\begin{figure}[htb]
\centering
\includegraphics[width=0.55\textwidth]{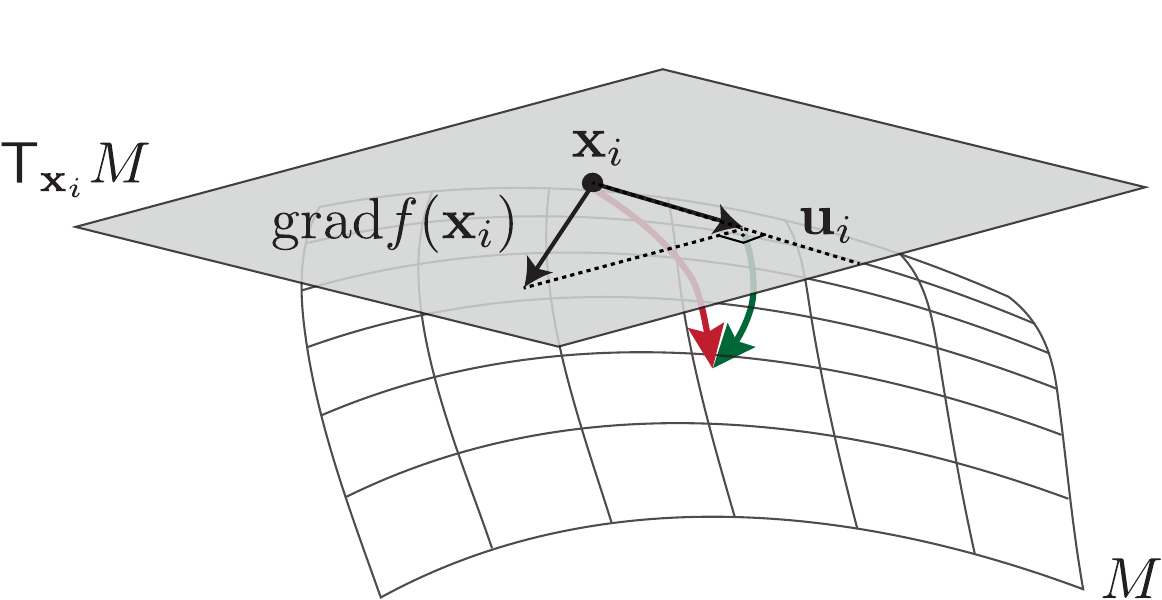}
\caption{Schematic representation of a randomized gradient descent algorithm on a Riemannian manifold $M$. 
At a point $\mbf x_i$ on $M$, the Riemannian gradient $\operatorname{grad} f(\mbf x_i)$ of a cost function $f:M\to\mathbb R$ lies in the tangent space $\T_{\mbf x_i} M$ depicted as the plane spanned by $\mbf u_i$ and its orthocomplement.
Instead of iteratively following the full gradient in order to optimize $f$, here in each step we consider a projection of $\text{grad}f(\mbf x_i)$ onto a randomly chosen tangent-space direction $\mbf u_i$. 
This is key for {\em efficient} implementations of gradient flows in large-scale quantum optimization problems.
In the case of the special unitary group the randomized directions are given by (traceless) skew-Hermitian  operators $\mbf u_{i}=i H_{i}$. 
The corresponding quantum circuit consists of a sequence of unitary transformations as shown in Fig.~\ref{fig:intro2}, where the cost function is then denoted by $J$. 
}
\label{fig:intro1}
\end{figure}

\subsubsection*{Structure and Main Results}

In Sec.~\ref{sec:Riem-Grad-Flows} we introduce the fundamental notions from Riemannian geometry to describe gradient flows (Sec.~\ref{subsec:basic-concepts}), recollecting some well-known results on the convergence of such gradient flows (Secs.~\ref{sec:gradient-flows} and~\ref{subsec:Morse-Bott}), and their algorithmic implementation as gradient descents (Sec. \ref{sec:num-grad-descent}).
In Sec.~\ref{sec:algorithm-convergence} we describe a gradient descent algorithm where, in each step, the full gradient of a high-dimensional problem is projected on just a randomly chosen direction of the tangent space {as illustrated in Fig.\ref{fig:intro1}}.
{We consider both uniformly distributed\footnote{with respect to the Haar measure} and discrete random directions.}

The main results are obtained in Secs.~\ref{sec:escape-saddles} and \ref{sec:almost-sure-convergence}, where we prove that a randomly projected gradient descent algorithm {of} a smooth Morse--Bott cost function (with compact sublevel sets) on a Riemannian manifold comes with two vitally beneficial properties:
{\em (i) it almost surely escapes saddle points} --- as is shown in Lem.~\ref{lemma:saddle-as-gen}, and as a consequence
{\em (ii) it almost surely converges to a {single} local minimum} --- as shown in Thm.~\ref{thm:almost-sure-convergence}.

In Sec.~\ref{sec:low-dims}, the case of a two-dimensional saddle point is {studied
using analytical methods and 
numerical simulation}. We obtain good approximations for the time necessary to pass the saddle point.

Finally, Sec.~\ref{sec:gs-opt} shows how the previous results can be applied in quantum optimization: {by construction, the important} problem {class} of ground-state preparation
{comes with the particular advantage that our algorithm} achieves almost sure convergence to a single
{\em global} minimum. {Moreover, in these cases the algorithm can be efficiently implemented on quantum computers}. 

\section{Riemannian Gradient Flow and Descent}
\label{sec:Riem-Grad-Flows}

\subsection{Basic Concepts}
\label{subsec:basic-concepts}

First, we recall some basic notions and notations from Riemannian geometry.
Let $M$ denote a finite dimensional smooth manifold of dimension $N$ with tangent and cotangent bundles $\T M$ and $\T^*M$, respectively. 
Moreover, let $M$ be endowed with a Riemannian metric $\msf g$, i.e.\ a smoothly varying scalar product $\msf g(\mbf x) = \langle \,\cdot, \cdot\,\rangle_{\mbf x}$ on each tangent space $\T_{\mbf x} M$, $\mbf x\in M$. 
The pair $(M,\msf g)$ is called a \emph{Riemannian manifold}.
{For ease of notation we will usually omit the subscript in the scalar product and write $\langle \,\cdot, \cdot\,\rangle$ and similarly $\|\cdot\|$ for the resulting norm.}

\medskip
Every Riemannian manifold is equipped with a Riemannian density $\mu$ induced by the metric $\msf g$, see~\cite[Prop.~2.44]{Lee13}. 
This density can be used to integrate functions, to induce a measure on $M$ and to define sets of measure zero. 
Equivalently (but without introduction of a Riemannian density), one could say that $S \subset M$ has measure zero if there is an atlas
such that $S \cap {\rm dom} \,\sigma$ has Lebesgue measure zero in every chart $\sigma$. 
In particular, any submanifold of dimension strictly smaller than $N$ has measure zero.

\subsection{Gradient Flows and Asymptotic Behaviour (I)} \label{sec:gradient-flows}

In the following general description, we depart from the above notation of \/`the\/' cost function $J$ borrowed from quantum
optimization and introduce $f : M \to \mathbb{R}$ for an arbitrary smooth (cost) function on $M$ with differential $df:M\to\T^* M$.
Then the gradient of $f$ at $\mbf x\in M$, denoted by $\grad f(\mbf x)$, is the unique vector in $\T_{\mbf x}M$ determined by the identity
\begin{equation}
\label{eq:gradient}
df(\mbf x)\xi = \langle\grad f(\mbf x),\xi \rangle
\end{equation} 
for all $\xi \in \T_{\mbf x} M$.
Equation \eqref{eq:gradient} naturally defines a vector field on $M$ via
$\grad f:M\to\T M$, 
$\mbf x\to\grad f(\mbf x)$
called gradient vector field of $f$. The corresponding ordinary differential equation
\begin{equation}
\label{eq:gradient-flow}
\dot{\mbf x}=-\grad f(\mbf x) =: F(\mbf x)
\end{equation}
and its flow are referred to as the \emph{gradient system} and the \emph{gradient flow} of $f$, respectively. 

In local coordinates, the above reads as follows:
Let $\sigma:U\to\mathbb R^N$ be a chart about $\mbf x\in M$, denote $\mbf y=\sigma(\mbf x)$, and let $\tilde f:=f\circ\sigma^{-1}$ be the chart representation of $f$. 
Moreover, let $\tilde{\msf g}:=\sigma_*\msf g$ be the chart representation (where $\sigma_*$ denotes the pushforward) of the metric $\msf g$ on $\sigma(U)$, i.e.\ $\tilde {\msf g}(\mbf y) = \sum_{i,j=1}^n\tilde{\msf g}_{ij}(\mbf y)\,dy^idy^j$, where $\tilde{G}(\mbf y) :=\big(\tilde{\msf g}_{ij}(\mbf y)\big)$ is a positive definite matrix varying smoothly in $\mbf y\in \sigma(U)$. 
Finally, let $\tilde{F} := \sigma_*F$ be the chart representation of the gradient vector field $-\grad f$. 
Then it holds that
$$
\tilde F^j(\mbf y) 
=-\sum_{i=1}^n \tilde{\msf g}^{ij}(\mbf y) \frac{\partial \tilde{f}}{\partial y_i}(\mbf y),
$$
where $\tilde{\msf g}^{ij}(\mbf y)$ are the entries of the inverse of $\tilde{G}(\mbf y)$ and $\frac{\partial}{\partial y_i}$ denotes the $i$-th partial derivative.

\medskip
Any convergence analysis of gradient systems is based on the following two observations: (i) the critical points of $f$ are the equilibria of \eqref{eq:gradient-flow}, (ii) $f$ constitutes a Lyapunov function for
\eqref{eq:gradient-flow}, i.e.~$f$ is monotonically decreasing along solutions.
Despite their simplicity, they immediately allow several non-trivial statements about the asymptotic behavior of~\eqref{eq:gradient-flow}, which can be characterized by its $\omega$-limit sets
$$
\omega(\mbf x_0) := \bigcap_{s > 0}
\overline{\big\{\varphi(t,\mbf x_0) \;|\; t>s\big\}}\,,
$$
where $ t \mapsto \varphi(t,\mbf x_0)$ denotes the unique solution of
\eqref{eq:gradient-flow} with initial value $\varphi(0,\mbf x_0)=\mbf x_0$.

\medskip
\begin{proposition}[\cite{Lee13}]
\label{prop:gradient-flow}
If f has compact sublevel sets, i.e.\ if the sets 
$\{\mbf x \in M : f(\mbf x) \leq c\}$ are compact for all $c \in  \mathbb R$, then every trajectory of~\eqref{eq:gradient-flow} converges to the set of critical points.
More precisely, every trajectory of~\eqref{eq:gradient-flow} exists for all $t\geq0$ and its $\omega$-limit set is a non-empty, compact and connected subset of the set of critical
points of $f$.
\end{proposition}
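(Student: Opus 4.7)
The plan is to use standard dynamical systems arguments, leveraging the two observations flagged just before the proposition: the critical points of $f$ are exactly the equilibria of \eqref{eq:gradient-flow}, and $f$ is a Lyapunov function, i.e.\ $\frac{d}{dt}f(\varphi(t,\mathbf x_0)) = -\|\grad f(\varphi(t,\mathbf x_0))\|^2 \leq 0$. So the trajectory stays in the sublevel set $S_0 := \{\mathbf x \in M : f(\mathbf x) \leq f(\mathbf x_0)\}$, which is compact by hypothesis.

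First I would establish global existence for $t \geq 0$. The gradient vector field $F = -\grad f$ is smooth, so local existence and uniqueness follow from standard ODE theory. The escape lemma for ODEs on manifolds says that a maximal solution defined on $[0, T_{\max})$ with $T_{\max} < \infty$ must eventually leave every compact subset of $M$. Since the trajectory is trapped in the compact set $S_0$, this forces $T_{\max} = \infty$.

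Next I would collect the topological properties of $\omega(\mathbf x_0)$. Non-emptiness follows because the trajectory lives in the compact set $S_0$: any sequence $t_n \to \infty$ yields a convergent subsequence of $\varphi(t_n,\mathbf x_0)$ whose limit lies in $\omega(\mathbf x_0)$. Compactness follows by writing $\omega(\mathbf x_0)$ as an intersection of closed sets inside $S_0$. Connectedness is the standard fact that the $\omega$-limit set of a precompact forward orbit of a continuous flow is connected: if $\omega(\mathbf x_0)$ split into two disjoint closed sets $A_1, A_2$ with disjoint open neighborhoods $U_1, U_2$, the continuous curve $t\mapsto \varphi(t,\mathbf x_0)$ would have to pass through the complement of $U_1 \cup U_2$ for arbitrarily large $t$, producing a further accumulation point outside $A_1 \cup A_2$, a contradiction.

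The final and most substantive step is to show $\omega(\mathbf x_0)$ consists of critical points, which is essentially LaSalle's invariance principle. Since $f$ is monotonically decreasing along the orbit and bounded below on the compact set $S_0$, the limit $c_\infty := \lim_{t\to\infty} f(\varphi(t,\mathbf x_0))$ exists, and by continuity $f \equiv c_\infty$ on $\omega(\mathbf x_0)$. The $\omega$-limit set is invariant under the flow, so for any $\mathbf y \in \omega(\mathbf x_0)$ the entire forward orbit $\varphi(t,\mathbf y)$ lies in $\omega(\mathbf x_0)$, whence $f(\varphi(t,\mathbf y)) = c_\infty$ for all $t \geq 0$. Differentiating gives $\|\grad f(\varphi(t,\mathbf y))\|^2 = 0$, and evaluating at $t=0$ shows $\grad f(\mathbf y) = 0$, i.e.\ $\mathbf y$ is critical. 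The main obstacle, or at least the point requiring the most care on a manifold rather than on $\mathbb R^n$, is the invariance of $\omega(\mathbf x_0)$ under the flow and the connectedness argument, both of which rest on the completeness of the flow established in the first step.
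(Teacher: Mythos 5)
Your proof is correct and is the standard LaSalle-type argument; the paper itself does not supply a proof of this proposition but simply cites it to Lee's textbook, and your reasoning (compact sublevel set traps the forward orbit, escape lemma gives global existence, precompact orbit gives non-empty compact connected $\omega$-limit set, flow-invariance of $\omega(\mathbf x_0)$ plus monotonicity of $f$ forces $\grad f$ to vanish there) is exactly what one finds in such references. One small remark: the flow-invariance of the $\omega$-limit set, which you correctly flag as the crux of the LaSalle step, requires only forward completeness of the flow restricted to the positively invariant compact set $S_0$, which you already have, so there is no gap.
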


\medskip
Although Prop.~\ref{prop:gradient-flow} shows that $\omega(\mbf x_0)$ is contained in the set of critical points of $f$, it does not guarantee convergence to a single critical point and, indeed, there are smooth gradient systems whose trajectories exhibit a non-trivial convergence behavior to the set of critical points (cf.~``Mexican hat counter-example'' by~\cite{Curry44}).
Yet for isolated critical points, one has the following trivial consequence of Prop.~\ref{prop:gradient-flow}.

\medskip
\begin{corollary}
\label{cor:iso-critical-point}
If f has compact sublevel sets and all critical points are isolated, then any solution of~\eqref{eq:gradient-flow} converges (for $t \to \infty$) to a single critical point of $f$. Moreover, if additionally all saddle points are strict,\footnote{see Subsec.~\ref{subsec:Morse-Bott} below} then for almost all initial points 
(in the sense of Sec.~\ref{subsec:basic-concepts}),
the flow converges to a local minimum. 
\end{corollary}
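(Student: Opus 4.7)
The plan is to deduce part (i) directly from Proposition~\ref{prop:gradient-flow} via a connectedness argument, and to establish part (ii) by combining the (center-)stable manifold theorem with the countable additivity of measure-zero sets.

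For part (i), fix $\mbf{x}_0 \in M$. By Proposition~\ref{prop:gradient-flow} the $\omega$-limit set $\omega(\mbf{x}_0)$ is a non-empty, compact, connected subset of the critical set $\operatorname{Crit}(f)$. Since all critical points are isolated, $\operatorname{Crit}(f)$ carries the discrete subspace topology, so its only non-empty connected subsets are singletons. Hence $\omega(\mbf{x}_0) = \{\mbf{x}^*\}$ for a single critical point $\mbf{x}^*$, and a standard application of the Lyapunov property of $f$ together with the definition of the $\omega$-limit then yields $\varphi(t,\mbf{x}_0) \to \mbf{x}^*$ as $t \to \infty$.

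For part (ii) I would classify each isolated critical point as either a local minimum, a local maximum, or a strict saddle. Passing to a local chart around a strict saddle $\mbf{x}^*$, the linearization of \eqref{eq:gradient-flow} reads $\dot{\xi} = -\mathrm{Hess}\, f(\mbf{x}^*)\,\xi$, whose spectrum is the negation of the Hessian spectrum. By the strict-saddle hypothesis at least one eigenvalue of the linearization is strictly positive, i.e.\ a non-trivial unstable direction exists. The center-stable manifold theorem then furnishes a local invariant $C^1$-submanifold $W^{cs}_{\mathrm{loc}}(\mbf{x}^*)$ of dimension $n - k < n$, where $k \geq 1$ is the number of negative Hessian eigenvalues, containing every forward orbit that remains in a small neighborhood of $\mbf{x}^*$. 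For a local maximum the corresponding ``stable manifold'' degenerates to the singleton $\{\mbf{x}^*\}$. The global basin $W^s(\mbf{x}^*) := \{\mbf{x}_0 \in M : \varphi(t,\mbf{x}_0) \to \mbf{x}^*\}$ is then contained in the countable union $\bigcup_{T \in \mathbb{N}} \varphi(-T,\, W^{cs}_{\mathrm{loc}}(\mbf{x}^*))$ of diffeomorphic images of a submanifold of dimension strictly less than $n$, hence has Riemannian measure zero in the sense of Sec.~\ref{subsec:basic-concepts}. Exhausting $M$ by the compact sublevel sets $\{f \leq k\}$, $k \in \mathbb{N}$, each containing only finitely many critical points, one obtains at most countably many non-minimum critical points in total; their basins form a countable union of measure-zero sets and so are jointly negligible. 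Off this exceptional set the unique limit from part~(i) is forced to be a local minimum.

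The principal obstacle is the (center-)stable manifold step: because the corollary does not require the Hessian at a strict saddle to be non-degenerate, one cannot rely on Hartman--Grobman alone but must invoke the center manifold theorem to absorb the possible zero-eigenvalue directions of the Hessian, yielding a center-stable (rather than purely stable) local manifold. A secondary technical point is to assemble the global basin $W^s(\mbf{x}^*)$ from the local piece $W^{cs}_{\mathrm{loc}}(\mbf{x}^*)$ as a countable union of backward-flow images; this relies on forward completeness of the flow (guaranteed by compact sublevel sets) and on the fact that the flow is a diffeomorphism at each finite time, so that preimages of measure-zero sets remain measure zero.
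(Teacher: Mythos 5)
The paper provides no explicit proof of this corollary, presenting it only as a ``trivial consequence'' of Proposition~\ref{prop:gradient-flow}; your reconstruction is correct and captures what is left implicit. Part~(i) is exactly the intended trivial consequence: connectedness of the $\omega$-limit set plus discreteness of isolated critical points forces a singleton limit. Part~(ii) is genuinely non-trivial, and you correctly identify the right tool: the center-stable manifold theorem (needed, as you note, because the strict-saddle hypothesis allows zero Hessian eigenvalues, so Hartman--Grobman alone does not suffice); this is precisely the mechanism behind the discrete-time analogue later cited from Lee et~al.~(Thm.~\ref{thm2:gradient-descent}). Your treatment of the technical points is sound: the basin of a strict saddle is covered by the countable union of time-$T$ preimages of $W^{cs}_{\mathrm{loc}}$, these preimages have measure zero because each time-$T$ map $\varphi(T,\cdot)$ is an injective local diffeomorphism, and the exhaustion of $M$ by compact sublevel sets shows there are only countably many critical points in total, so the exceptional set is a countable union of null sets. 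One cosmetic remark: under the paper's definition a strict local maximum is itself a ``strict saddle'' (all Hessian eigenvalues negative), so the separate treatment of local maxima is harmless but unnecessary; the uniform dimension bound $\dim W^{cs}_{\mathrm{loc}} = n - n_- \le n-1$ already covers every non-minimum.
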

\medskip
\noindent
Continua of critical points are much more subtle to handle. 
Some enhanced conditions guaranteeing convergence to a single critical point will be briefly discussed in the next subsection.

\subsection{The Hessian, Morse--Bott Functions and Asymptotic Behaviour (II)}
\label{subsec:Morse-Bott}

As in the Euclidian case, linearizing the vector field $F$ at its equilibria sheds light on its local stability. 
Clearly, since $F = -\grad f$ constitutes a gradient vector field,
its linearization at an equilibrium $\mbf x\in M$ is given by the Hessian $\operatorname{H}_f(\mbf x)$ of $f$ at $\mbf x\in M$.
In general, if $M$ is non-Euclidian, the computation of $\operatorname{H}_f(\mbf x)$ can be rather involved.
Yet, at critical points $\mbf x_0 \in M$ of $f$, the \emph{Hessian} is given by the symmetric bilinear form
\begin{equation}
\label{eq:hessian-1}
\operatorname{H}_f(\mbf x_0)(\xi,\eta) 
:= 
\sum_{i,j=1}^n\operatorname{\tilde{H}}(f\circ\sigma^{-1})
\big(\sigma(\mbf x_0)\big)_{ij}
(D\sigma(\mbf x_0)\xi)_i
(D\sigma(\mbf x_0)\eta)_j\,,
\end{equation}
where $\sigma:U\to\mbb R^n$ is any chart around $\mbf x_0\in M$ and $\operatorname{\tilde{H}}(f\circ\sigma^{-1})$ denotes the ordinary Hesse
matrix of the chart representation
$f\circ\sigma^{-1}$. 
It is straightforward to show that \eqref{eq:hessian-1} is independent of $\sigma$.
Moreover, we will call $\mbf x_0$ a \emph{strict saddle point} if $\operatorname{H}_f(\mbf x_0)$
(or more precisely the associated symmetric operator) has at least one negative eigenvalue. 

The above concepts allow a trivial generalization of a well-known result from elementary calculus which follows straightforwardly in local coordinates.

\medskip
\begin{proposition}
\label{prop:hessian-minimum}
Let $M$ be a Riemannian manifold and let $\mbf x$ be a critical point of the smooth function $f:M\to\mathbb R$. 
If $\operatorname{H}_f(\mbf x)$ is positive definite, then $\mbf x$ is a strict local minimum of $f$.
\end{proposition}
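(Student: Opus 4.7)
The plan is to reduce the problem to the classical Euclidean case via any chart around $\mbf x$, exploiting the fact that the definition \eqref{eq:hessian-1} of the Hessian at a critical point is already stated in terms of the ordinary Hesse matrix of the chart representation.

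First I would fix an arbitrary chart $\sigma : U \to \mathbb{R}^n$ with $\mbf x \in U$, set $\tilde f := f \circ \sigma^{-1}$, and observe two things. On the one hand, since $D\sigma(\mbf x) : \T_{\mbf x}M \to \mathbb{R}^n$ is a linear isomorphism, formula \eqref{eq:hessian-1} says precisely that the bilinear form $\operatorname{H}_f(\mbf x)$ is the pullback of the ordinary Hesse matrix $\operatorname{\tilde{H}}(\tilde f)(\sigma(\mbf x))$ under $D\sigma(\mbf x)$. Hence positive definiteness of $\operatorname{H}_f(\mbf x)$ on $\T_{\mbf x}M$ is equivalent to positive definiteness of $\operatorname{\tilde{H}}(\tilde f)(\sigma(\mbf x))$ on $\mathbb{R}^n$. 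On the other hand, since $\mbf x$ is a critical point of $f$ and $\sigma$ is a diffeomorphism, $\sigma(\mbf x)$ is a critical point of $\tilde f$ in the ordinary Euclidean sense.

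Next I would invoke the standard result from multivariable calculus: a smooth function on an open subset of $\mathbb{R}^n$ with a critical point at which the Hesse matrix is positive definite attains a strict local minimum there. The proof is a one-line application of Taylor's theorem with second-order remainder, writing
\begin{equation*}
\tilde f(\mbf y) - \tilde f(\sigma(\mbf x)) = \tfrac{1}{2}\bigl(\mbf y - \sigma(\mbf x)\bigr)^{\!\T} \operatorname{\tilde{H}}(\tilde f)(\sigma(\mbf x)) \bigl(\mbf y - \sigma(\mbf x)\bigr) + o\bigl(\|\mbf y - \sigma(\mbf x)\|^{2}\bigr),
\end{equation*}
and using that the quadratic form is bounded below by $\lambda_{\min}\|\mbf y - \sigma(\mbf x)\|^{2}$ with $\lambda_{\min} > 0$.

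Finally, since $\sigma$ is a diffeomorphism between neighborhoods of $\mbf x$ and $\sigma(\mbf x)$, a strict local minimum of $\tilde f$ at $\sigma(\mbf x)$ transfers back to a strict local minimum of $f = \tilde f \circ \sigma$ at $\mbf x$, completing the argument. There is no genuine obstacle here: the only point worth being careful about is the chart-independence built into \eqref{eq:hessian-1}, which guarantees that the Euclidean reduction is legitimate and does not depend on an arbitrary choice.
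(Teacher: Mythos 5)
Your proof is correct and is precisely the argument the paper intends: the text simply notes that the proposition "follows straightforwardly in local coordinates," which is exactly the reduction-to-Euclidean-via-chart argument you carry out, using that formula \eqref{eq:hessian-1} exhibits $\operatorname{H}_f(\mbf x)$ as the pullback of the ordinary Hesse matrix by the chart differential.
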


\medskip
\noindent
Now the question arises whether the (asymptotic) stability of an equilibrium $\mbf x\in M$ of \eqref{eq:gradient-flow} may dependent on the Riemannian metric $\msf g$ --- and the answer is surprisingly ``yes'', cf.~\cite{Takens71}.
However, certain properties such as being a strict local minimum or an isolated critical point are obviously not up to the choice of the metric, and thus the (asymptotic) stability of those equilibria is  independent of the Riemannian metric as the next theorem shows.

\medskip
\begin{theorem} \label{thmI:pointconvergence}
Let $M$ be a Riemannian manifold and $f:M\to\mathbb R$ a smooth function. Then the following hold:
\begin{enumerate}[(i)]
\item
Every strict local minimum of $f$ is a stable equilibrium of \eqref{eq:gradient-flow}.
\item
Every strict local minimum of $f$ which is additionally an isolated critical point is an asymptotically stable equilibrium
of \eqref{eq:gradient-flow}.
\end{enumerate}
\end{theorem}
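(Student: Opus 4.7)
The plan is to employ $V(\mbf x) := f(\mbf x) - f(\mbf x_0)$ as a Lyapunov function for \eqref{eq:gradient-flow}. The key observation, valid for any Riemannian metric $g$, is that along any solution $t\mapsto\mbf x(t)$ of \eqref{eq:gradient-flow}
\begin{equation*}
\tfrac{d}{dt} V(\mbf x(t)) = df(\mbf x(t))\,\dot{\mbf x}(t) = -\langle \grad f(\mbf x(t)),\grad f(\mbf x(t))\rangle_{\mbf x(t)} \leq 0,
\end{equation*}
with equality precisely at critical points of $f$. Since being a strict local minimum is a property of $f$ alone and is unaffected by the choice of $g$, while the monotonicity of $V$ along the flow is intrinsic to the gradient construction, both conclusions are automatically metric-independent.

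For part (a) I would fix a relatively compact coordinate neighborhood $U$ of $\mbf x_0$ on which $\mbf x_0$ is the unique minimizer of $f$, and a distance function $d$ coming from $g$. Given $\epsilon>0$ small enough that the closed ball $\overline{B(\mbf x_0,\epsilon)}\subset U$, compactness of the sphere of radius $\epsilon$ together with strict local minimality yields $m := \min\{V(\mbf x) : d(\mbf x,\mbf x_0)=\epsilon\} > 0$. Continuity of $V$ at $\mbf x_0$ provides $\delta\in(0,\epsilon)$ with $V(\mbf x) < m$ whenever $d(\mbf x,\mbf x_0) < \delta$. By monotonicity, any solution starting in $B(\mbf x_0,\delta)$ must satisfy $V(\mbf x(t)) < m$ for all $t\geq0$, so it cannot cross the sphere of radius $\epsilon$; the trajectory hence stays inside $B(\mbf x_0,\epsilon)$ for all $t\geq0$, giving Lyapunov stability.

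For part (b) I would additionally shrink $U$, using isolation of $\mbf x_0$ as a critical point, so that $\mbf x_0$ is the only critical point of $f$ in $\overline{U}$. With $\delta,\epsilon$ chosen as in (a), trajectories starting in $B(\mbf x_0,\delta)$ remain in the compact set $\overline{B(\mbf x_0,\epsilon)}$, and therefore their $\omega$-limit sets are non-empty, compact, and connected. A LaSalle-type argument, or the local version of Proposition~\ref{prop:gradient-flow}, shows that $\omega(\mbf x(0))$ consists solely of critical points of $f$ in $\overline{B(\mbf x_0,\epsilon)}$. By isolation this forces $\omega(\mbf x(0)) = \{\mbf x_0\}$, and hence $\mbf x(t)\to\mbf x_0$.

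The main subtlety is the localization of Proposition~\ref{prop:gradient-flow}, which was phrased under the global hypothesis of compact sublevel sets. I would handle this by noting that the proof of that proposition only uses (i) monotonicity of $f$ along the flow and (ii) relative compactness of the forward orbit, both of which are furnished here by the Lyapunov-stability step of part (a). Everything else is bookkeeping, and the independence from the Riemannian metric $g$ falls out automatically because neither the strict-minimum condition on $f$ nor the equilibrium set of \eqref{eq:gradient-flow} depend on $g$.
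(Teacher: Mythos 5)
Your proposal is correct and takes the same route the paper points to: use $f$ (shifted by $f(\mbf x_0)$) as a Lyapunov function and invoke classical Lyapunov/LaSalle stability theory, which is exactly what the authors do by citing \cite{HM94,Irwin} without further detail. You simply spell out the standard $\epsilon$--$\delta$ argument for part (a) and the localized $\omega$-limit-set argument for part (b), and you correctly flag (and resolve) the one non-cosmetic point, namely that Proposition~\ref{prop:gradient-flow} needs forward precompactness of the orbit rather than global compactness of sublevel sets, which your part (a) supplies.
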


\medskip
\noindent
Both assertions follow immediately from classical stability theory
by taking $f$ as Lyapunov function, cf.~\cite{HM94,Irwin}.
Handling non-isolated critical points is much more subtle. 
A first hint on how to approach this issue is obtained by Cor.~\ref{cor:iso-critical-point} which could be restated (in a slightly weaker version) as follows: 

\medskip
\noindent
{\it For Morse functions with 
compact sublevel sets every solution of the corresponding gradient system converges (for $t \to \infty$) to a single critical point.} 

\medskip
\noindent
This suggests to work with Morse--Bott functions when it comes to non-isolated critical points. 
Recall that a smooth function $f:M\to\mathbb R$ is a called \emph{Morse--Bott function}, if the set $C$ of critical points is a closed submanifold of $M$, such that its tangent space $\T_{\mbf x} C$ coincides with the kernel of the Hessian operator for all $\mbf x\in C$. 
Note that $C$ is allowed to have several
connected components with possibly different dimensions,~\cite[p.~366]{HM94},
\cite[Def.~2.41]{Nicolaescu11} or 
\cite{Hurtubise2010}.
Thus Morse functions are particular Morse--Bott functions, where $C$ consists only of $0$-dimensional mainfolds (i.e.\ isolated points).

Now, the above concept allows a generalization of the Morse--Palais Lemma to Morse--Bott functions which is often called Morse--Bott Lemma, see
\cite{Hurtubise2004a}. 
It yields
a local normal form of Morse--Bott functions near their critical points, which reads in local coordinates as follows
\begin{equation*}
\tilde{f}(\mbf x,\mbf y,\mbf z) = \|\mbf x\|^2 - \|\mbf y\|^2 \,,
\end{equation*}
with $\mbf x\in\mathbb R^{n_+}$, $\mbf y\in\mathbb R^{n_-}$, and $\mbf z\in\mathbb R^{n_0}$, where $n_+$, $n_-$ and $n_0$ are the numbers of positive, negative and zero eigenvalues of the Hessian operator respectively.
Here $\|\cdot\|^2$ denotes the usual Euclidean norm.
This finally allows to prove that solutions of the respective gradient system converge to a single critical point.

\medskip
\begin{theorem}[Thm. 2.3 in \cite{aulbach1984}]
\label{thm:Morse--Bott}
Let $f:M \to \mathbb R$ be a Morse--Bott function on a Riemannian manifold $M$ with compact sublevel sets. Then every solution of the  gradient flow \eqref{eq:gradient-flow} converges to a single critical point. Moreover, for almost every initial point, the flow converges to a local minimum.
\end{theorem}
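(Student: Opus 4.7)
The plan is to split the theorem into its two assertions—convergence of every trajectory to a single critical point, and full-measure convergence to a local minimum—and to base both parts on the local normal form supplied by the Morse--Bott lemma quoted just above the theorem.

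For the first assertion, Proposition~\ref{prop:gradient-flow} already guarantees that $\omega(\mathbf{x}_0)$ is a non-empty, compact, connected subset of the critical set $C$. It therefore suffices to prove that this $\omega$-limit set reduces to a single point. I would pick any $\mathbf{p}\in\omega(\mathbf{x}_0)$, lying in some connected critical submanifold $N\subseteq C$, and pass to Morse--Bott coordinates $(\mathbf{x},\mathbf{y},\mathbf{z})$ on a neighborhood $U$ of $\mathbf{p}$ in which $\tilde f - f(\mathbf{p}) = \|\mathbf{x}\|^2 - \|\mathbf{y}\|^2$ and $N\cap U = \{\mathbf{x}=\mathbf{y}=\mathbf{0}\}$. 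In these coordinates the Euclidean gradient satisfies $\|\nabla\tilde f\|^2 = 4(\|\mathbf{x}\|^2+\|\mathbf{y}\|^2) \geq 4\,|\tilde f - f(\mathbf{p})|$, and since the chart representation $\tilde G$ of $g$ is smooth and positive definite, a norm-equivalence estimate transports this to a Łojasiewicz inequality $\|\grad f\|_g \geq c\sqrt{|f-f(\mathbf{p})|}$ on a smaller neighborhood $V\subset U$. Because $f$ is monotonically decreasing along the flow and $\mathbf{p}\in\omega(\mathbf{x}_0)$, one has $f(\varphi(t,\mathbf{x}_0))\to f(\mathbf{p})$ from above; the usual Łojasiewicz argument---differentiate $(f-f(\mathbf{p}))^{1/2}$ along the flow and use the inequality---then yields a uniform bound on $\int_{t_1}^{t_2}\|\dot\varphi(t,\mathbf{x}_0)\|_g\,dt$ for any orbit piece remaining in $V$. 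Since the trajectory re-enters every neighborhood of $\mathbf{p}$, this finite-length estimate forces it to stay inside a sufficiently small $V$ from some time onwards, so $\varphi(t,\mathbf{x}_0)\to\mathbf{p}$ and $\omega(\mathbf{x}_0)=\{\mathbf{p}\}$.

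For the second assertion, I would partition the components of $C$ into those consisting of local minima and the remaining \emph{saddle components} $N$, on which the Hessian has at least one negative eigenvalue (the index $n_-(N)\geq 1$ is locally constant on a connected Morse--Bott critical submanifold). Each saddle component is a normally hyperbolic invariant submanifold of the gradient vector field, so the invariant stable manifold theorem furnishes a local stable manifold $W^s_{\mathrm{loc}}(N)$ of dimension $\dim N + n_+(N)$; its codimension is $n_-(N)\geq 1$, hence $W^s_{\mathrm{loc}}(N)$ has measure zero in $M$. The full stable set $W^s(N) = \bigcup_{k\in\mathbb{N}}\varphi(-k,W^s_{\mathrm{loc}}(N))$ is then also null, as each $\varphi(-k,\cdot)$ is a diffeomorphism. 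Compactness of sublevel sets together with second countability of $M$ forces $C$ to have at most countably many components, so the union of $W^s(N)$ over all saddle components $N$ remains a null set; its complement is, by the first part of the theorem, precisely the set of initial conditions whose trajectories converge to a local minimum.

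The main technical obstacle is the first assertion: one must carefully match the Euclidean estimate in the Morse--Bott chart to the Riemannian metric (this is where smoothness and positive definiteness of $\tilde G$ enter quantitatively) and rule out a slow drift of the trajectory along $N$—which is exactly what the finite-length bound in $g$ achieves, provided the Morse--Bott neighborhood is chosen small enough that the metric-equivalence constants are under control. The stable-manifold step of the second assertion is comparatively routine once $N$ is recognized as normally hyperbolic and the appropriate invariant stable manifold theorem for positive-dimensional critical submanifolds is invoked.
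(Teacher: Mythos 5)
The paper itself does not supply a proof of this theorem; the text preceding it just cites the Morse--Bott lemma as the enabling tool and defers to the literature. Measured against the paper's implied strategy, your proposal is essentially the intended one, and both halves of your argument are sound.

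For the first assertion, the route you take---combine Proposition~\ref{prop:gradient-flow} with the Morse--Bott normal form $\tilde f = \|\mathbf{x}\|^2 - \|\mathbf{y}\|^2$ to extract a \L{}ojasiewicz-type estimate $\|\grad f\|_g \geq c\,|f-f(\mathbf{p})|^{1/2}$, then run the standard finite-length trapping argument---is exactly how \cite{HM94} and its descendants handle the Morse--Bott case, and it has the virtue of making the link to the analytic \L{}ojasiewicz result (Theorem~\ref{thm:Lojasiewicz}) explicit. Your transfer of the Euclidean bound $\|\nabla\tilde f\|^2 = 4(\|\mathbf{x}\|^2+\|\mathbf{y}\|^2) \geq 4|\tilde f - f(\mathbf{p})|$ to the metric $g$ via $\|\grad f\|_g^2 = (\nabla\tilde f)^\top \tilde G^{-1}\nabla\tilde f \geq \lambda_{\min}(\tilde G^{-1})\|\nabla\tilde f\|^2$ is correct, and positivity of $\lambda_{\min}(\tilde G^{-1})$ on a compact chart closure gives the needed uniform constant. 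You are a bit terse on the trapping step---one should record that since $f(\varphi(t))\downarrow f(\mathbf{p})$ one can choose $t_1$ with $\varphi(t_1)$ in an $\epsilon$-ball about $\mathbf{p}$ and with $(2/c)\sqrt{f(\varphi(t_1))-f(\mathbf{p})}<\epsilon$, after which a first-exit-time contradiction keeps the orbit in the $2\epsilon$-ball forever and the remaining arclength is finite---but the idea is clearly in place.

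For the second assertion, your use of normal hyperbolicity is valid and standard: the Morse--Bott condition says exactly that the Hessian is nondegenerate on the normal bundle of each critical component $N$, so $N$ is a normally hyperbolic invariant manifold of fixed points for the flow, and Hirsch--Pugh--Shub yields a local stable manifold of codimension $n_-(N)\geq 1$. Two small points deserve a sentence each in a careful write-up: (i) why $\bigcup_{t\geq 0}\varphi(-t, W^s_{\mathrm{loc}}(N))$ reduces to a countable union (this follows from forward invariance of $W^s_{\mathrm{loc}}(N)$, so one may restrict to integer times), and (ii) why each connected component of $C$ is compact (each lies in a single level set, which is compact by the compact-sublevel-set assumption). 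With those two remarks added, the null-set argument is complete, and combining with the first part yields full-measure convergence to local minima. Overall, the proposal is correct and follows essentially the same route the paper points at.
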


\medskip
Finally, we recall another very powerful result for analyzing the convergence of gradient systems which is based on \L{}ojasiewicz's celebrated gradient estimate~\cite{Loja84}.
Let $f:M \to \mathbb R$ be real analytic, ${\mbf x}_0 \in M$ a critical point of $f$, and assume without loss of generality that $f({\mbf x}_0) = 0$. 
Then, near ${\mbf x}_0$, one has the estimate
\begin{equation}
\label{eq:Lojasievicz}
\|\grad (\mu \circ f)({\mbf x})\| \geq c\,,
\end{equation}
where $\mu: \mathbb R^+ \to \mathbb R^+$ is a strictly increasing $C^1$-function and $c > 0$ some positive constant, cf.~\cite[Cor.~1.1.25]{Diss-Lageman}. In the literature, one usually finds $\mu(r) := r^{1-\theta}$ with $\theta \in (0,1)$.
Eq.~\eqref{eq:Lojasievicz} 
easily allows to bound the 
length of any trajectory of \eqref{eq:gradient-flow}
whose $\omega$-limit set is 
non-empty. Hence one gets the following result.

\medskip
\begin{theorem}[\cite{Loja84}] 
\label{thm:Lojasiewicz}
Let $(M,\mathrm g)$ and $f:M \to \mathbb R$ be are real analytic. Then every non-empty
$\omega$-limit set of \eqref{eq:gradient-flow} consists only of a single critical point.
\end{theorem}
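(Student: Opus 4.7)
The strategy is to apply the \L{}ojasiewicz gradient estimate~\eqref{eq:Lojasievicz} to show that the arc length of a gradient trajectory is finite in a neighbourhood of any of its accumulation points, and to upgrade ``finite length'' to genuine convergence. Let $\varphi(t,\mbf x_0)$ denote the solution of~\eqref{eq:gradient-flow} whose $\omega$-limit set is under consideration, and pick any $\mbf x^* \in \omega(\mbf x_0)$. After subtracting a constant, I normalize $f(\mbf x^*)=0$. Since $t \mapsto f(\varphi(t,\mbf x_0))$ is non-increasing (Lyapunov property) and some subsequence of the trajectory converges to $\mbf x^*$, one obtains $f(\varphi(t,\mbf x_0)) \searrow 0$; in particular $\mbf x^*$ is automatically a critical point. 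The degenerate case $f(\varphi(t_0,\mbf x_0))=0$ for some finite $t_0$ forces $\grad f = 0$ along the trajectory from $t_0$ onwards, making $\varphi$ constant and the claim trivial, so we may assume $f > 0$ strictly along the entire trajectory.

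Next I would fix an open neighbourhood $U$ of $\mbf x^*$ on which the \L{}ojasiewicz inequality $\|\grad(\mu\circ f)\| \geq c$ holds with $\mu(r)=r^{1-\theta}$. Writing $g := \mu \circ f$ and using $\grad g = \mu'(f)\grad f$, a routine chain-rule computation along the flow yields
\begin{equation*}
-\frac{d}{dt}\, g(\varphi(t,\mbf x_0)) \;=\; \|\grad g\|\cdot\|\grad f\| \;\geq\; c\,\|\grad f(\varphi(t,\mbf x_0))\|,
\end{equation*}
i.e.\ the speed $\|\dot\varphi\| = \|\grad f\|$ is pointwise dominated, up to a universal constant, by the decrease of the Lyapunov function $g$. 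Since $g \geq 0$ is non-increasing along the entire flow, the telescoping sum $\sum_k [g(\varphi(a_k,\mbf x_0))-g(\varphi(b_k,\mbf x_0))]$ over any disjoint family of intervals is bounded by $g(\varphi(0,\mbf x_0))$.

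The core difficulty is that the \L{}ojasiewicz bound only holds inside $U$, whereas a priori the trajectory could keep leaving and re-entering $U$. To rule this out, I would pick $\epsilon > 0$ with $\overline{B(\mbf x^*,\epsilon)} \subset U$ and assume, for contradiction, that $\varphi(t,\mbf x_0)$ does not eventually lie in $B(\mbf x^*,\epsilon/2)$. Combined with a convergent subsequence $\varphi(t_n,\mbf x_0)\to \mbf x^*$, this produces infinitely many disjoint excursion intervals $[a_k,b_k]$ on which the trajectory travels from $\partial B(\mbf x^*,\epsilon/2)$ to $\partial B(\mbf x^*,\epsilon)$ while staying in $U$. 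Each excursion has geodesic length at least $\epsilon/2$, so their total length is infinite. On the other hand, the pointwise speed bound above combined with the telescoping property of $g$ forces the sum of excursion lengths to be at most $g(\varphi(0,\mbf x_0))/c < \infty$, which is the desired contradiction.

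Consequently, for every sufficiently small $\epsilon$, the trajectory is eventually trapped in $B(\mbf x^*,\epsilon)\subset U$, and applying the same estimate to this tail shows that $\varphi(\cdot,\mbf x_0)$ has finite length there, hence is a Cauchy curve as $t \to \infty$. Its limit is an accumulation point of the trajectory, and can therefore only be $\mbf x^*$. This forces $\omega(\mbf x_0)=\{\mbf x^*\}$. The hard part, other than the \L{}ojasiewicz inequality itself (which is supplied by the real-analytic hypothesis via \cite{Loja84, Diss-Lageman}), is precisely the excursion bookkeeping: converting a purely local gradient estimate into the global finite-length statement needed to pin down a single limit point.
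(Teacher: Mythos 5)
Your proof is correct and follows exactly the approach the paper sketches in the sentence preceding the theorem: use the \L{}ojasiewicz gradient estimate to bound the arc length of the trajectory and thereby pin down a unique limit point. The paper itself supplies no details (it simply cites \cite{Loja84} and \cite{Diss-Lageman}), and your excursion bookkeeping is the standard and correct way to turn the local estimate into the global finite-length conclusion.
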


\subsection{The Exponential Map and Numerical Gradient Descent}
\label{sec:num-grad-descent}

Finally, we approach the problem of discretization of \eqref{eq:gradient-flow} resulting in a convergent gradient descent method (cp.\ Fig.\ref{fig:intro1}). The ideas presented here can be traced back to \cite{Bro88+91, Bro89}
and \cite{Diss-Smith,Smith94}.
Let
\begin{equation} \label{eq:exponential-map}
\exp_{\mbf x}:\T_{\mbf x}M\to M
\end{equation}

\medskip
\noindent
denote the \emph{Riemannian exponential map} at
${\mbf x}\in M$, i.e. $t \to \exp_{\mbf x} (t\xi)$ is the unique geodesic with initial value ${\mbf x}\in M$ and initial velocity $\xi\in\T_{\mbf x} M$. 
Here, we assume for simplicity that $(M,\msf g)$ is (geodesically) complete, i.e.~\eqref{eq:exponential-map} is well-defined for the entire tangent space $\T_{\mbf x}M$. 
Probably the simplest discretization scheme
given by
\begin{equation}
\label{eq:gradient_descent}
{\mbf x}_{k+1} := \exp_{{\mbf x}_k} (-\eta_k \grad f ({\mbf x}_k)),
\end{equation}
can be seen as a ``natural'' generalization of the explicit Euler method. 
Here $\eta_k>0$ denotes an appropriate ``step size''\footnote{Note that the ``actual'' step size results form the modulus of $\eta \grad f({\mbf x}_k)$.} which may depend on $k\in\mathbb N$.

\medskip
\begin{remark}
It should be mentioned that gradient descent algorithms are usually studied as exact algorithms, not as numerical algorithms where real numbers are represented as floating point values and arithmetic is not exact. Numerical effects can be important, for instance cancellation effects from computing gradients in a naive way~\cite{muller18}. Nevertheless, this paper will ignore these numerical issues.
\end{remark}

\medskip
In order to guarantee convergence of \eqref{eq:gradient_descent} to the set of critical points, it is sufficient to apply the Armijo rule, see \cite{LueYe08}. An alternative to Armijo’s rule provides the step-size selection suggested by \cite{Bro93}, see also~\cite{HM94}. 
Moreover, for compact Riemannian
manifolds even a sufficiently small constant step size $\eta>0$ guarantees convergence:

\medskip

\begin{theorem}
If $f$ has compact sublevel sets, every trajetory of the discretized gradient descent~\eqref{eq:gradient_descent} (with constant but small enough step size) converges to the set of critical points.
Moreover, if additionally
\begin{enumerate}[(i)]
\item all critical points are isolated, then any trajectory of~\eqref{eq:gradient_descent} converges to a single critical point of $f$.
\item all saddle points are strict, then the set of initial points converging to strict saddle points has measure zero (in the sense of Sec.~\ref{subsec:basic-concepts}).
\end{enumerate}
\end{theorem}

\medskip
\noindent
Part (ii) of the above statement can be found in \cite[Cor.~6]{Lee19}.%
\footnote{There the authors consider Riemannian manifolds embedded in the Euclidean space and retractions instead of an intrinsic Riemannian exponential function, but this does not restrict the generality of the result, cf. the Nash embedding theorem~\cite[Thm.~46]{Berger03}.}
Thus, for a function with compact sublevel sets,
gradient descent \eqref{eq:gradient_descent} 
behaves similarly to its continuous counter-part, cf.~Prop.~\ref{prop:gradient-flow} and Cor.~\ref{cor:iso-critical-point}, i.e.\ it converges almost surely to a local minimum if the step size is chosen small enough.

\medskip
Deeper results, which yield convergence to a single critical point, are more subtle to derive. Here we present one result in this direction which is again based on the analyticity of the cost function $f$
and on \L{}ojasiewicz's inequaliy.

\medskip
\begin{theorem}\cite{Diss-Lageman}
If $(M,\msf g)$ and $f$ are real analytic, and the step sizes are chosen according to a version of the first Wolfe--Powell condition for Riemannian manifolds, then pointwise convergence holds. 
\end{theorem}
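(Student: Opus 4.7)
The plan is to combine the \L{}ojasiewicz gradient inequality~\eqref{eq:Lojasievicz} with the descent guarantee produced by the Wolfe--Powell line search in order to show that the total geodesic length of the iterates is finite, hence the sequence is Cauchy in $(M,g)$ and therefore converges. Compactness of the sublevel sets of $f$ (which enters implicitly as in Thm.~\ref{thm2:gradient-descent}) confines the iterates to a compact set and ensures existence of accumulation points. The Armijo part of the first Wolfe--Powell condition delivers the sufficient-decrease inequality
\begin{equation*}
f(\mbf x_k) - f(\mbf x_{k+1}) \;\geq\; c_1\, \eta_k\, \|\grad f(\mbf x_k)\|_{\mbf x_k}^2\,,
\end{equation*}
while the companion clause of the condition forces the step sizes $\eta_k$ to remain bounded away from zero whenever the gradient does. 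Standard arguments then give $\grad f(\mbf x_k)\to 0$, so every accumulation point $\mbf x^*$ of the iterates is a critical point.

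The core estimate uses the auxiliary Lyapunov function $\Phi(\mbf x):=(f(\mbf x)-f(\mbf x^*))^{\theta}$ with $\theta\in(0,1)$ chosen as in \L{}ojasiewicz's inequality. On a neighborhood $U$ of $\mbf x^*$ the inequality is equivalent to $\|\grad \Phi(\mbf x)\|_{\mbf x}\geq c>0$, i.e.\ $f(\mbf x)-f(\mbf x^*) \leq C\, \|\grad f(\mbf x)\|_{\mbf x}^{1/(1-\theta)}$. Combining this with the sufficient-decrease inequality and the fact that the geodesic generated by the exponential map has length exactly $\eta_k\|\grad f(\mbf x_k)\|_{\mbf x_k}$, a short chain-rule computation yields, as long as $\mbf x_k,\mbf x_{k+1}\in U$ and $f(\mbf x_k)>f(\mbf x^*)$,
\begin{equation*}
\Phi(\mbf x_k) - \Phi(\mbf x_{k+1}) \;\geq\; \tilde c\, \eta_k\, \|\grad f(\mbf x_k)\|_{\mbf x_k} \;=\; \tilde c\, d(\mbf x_k,\mbf x_{k+1})
\end{equation*}
for a constant $\tilde c>0$ depending only on $c$, $c_1$ and the lower bound on $\eta_k$. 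Telescoping this inequality gives $\sum_k d(\mbf x_k,\mbf x_{k+1}) \leq \tilde c^{-1}\Phi(\mbf x_{k_0})<\infty$, which shows that $(\mbf x_k)$ is Cauchy and therefore converges to $\mbf x^*$.

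The main obstacle is the bootstrapping (or \emph{trapping}) argument needed to make the telescoping work: the \L{}ojasiewicz inequality holds only locally around $\mbf x^*$, so one must guarantee that once an iterate $\mbf x_{k_0}$ enters a small geodesic ball $B_r(\mbf x^*)\subset U$, no subsequent iterate ever escapes. This is done by choosing $r$ so small that, for any starting point inside $B_{r/2}(\mbf x^*)$, the length bound $\sum_{k\geq k_0} d(\mbf x_k,\mbf x_{k+1})\leq \tilde c^{-1}\Phi(\mbf x_{k_0})$ itself is less than $r/2$; because $f(\mbf x_{k_0})\to f(\mbf x^*)$ along the subsequence accumulating at $\mbf x^*$, this is achievable. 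Care must be taken that the exponential map is well behaved on $B_r(\mbf x^*)$ (which follows from completeness and reducing $r$ below the injectivity radius) and that the edge case $f(\mbf x_k)=f(\mbf x^*)$ in finitely many steps is handled separately (there the sequence is eventually stationary). Assembling these ingredients in the Riemannian setting reproduces the Euclidean argument of Absil, Mahony and Andrews adapted to manifolds, which is precisely the content established in~\cite{Diss-Lageman}.
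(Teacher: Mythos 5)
The paper does not prove Theorem~\ref{thm1:gradient-descent}; it simply cites \cite{Diss-Lageman} and remarks beforehand only that the result ``is again based on the analyticity of the cost function $f$ and on \L{}ojasiewicz's inequality.'' Your proposal reconstructs precisely the standard argument of that type: the Wolfe--Powell condition furnishes a sufficient-decrease estimate, the \L{}ojasiewicz inequality is reformulated via the concave desingularizing function $\Phi=(f-f^*)^\theta$ to convert function-value decrease into a bound on geodesic step length, the bound is telescoped to obtain finite total trajectory length, and a trapping/bootstrap argument keeps the iterates inside the neighborhood where the \L{}ojasiewicz estimate is valid. This is indeed the approach taken in Lageman's thesis (and its Euclidean antecedent of Absil--Mahony--Andrews), so your reconstruction is in line with the reference the paper relies upon.

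Two small points worth tightening if this were written out in full. First, the identity ``length of the geodesic step $= \eta_k\|\grad f(\mbf x_k)\|_{\mbf x_k}$'' requires that the step stay inside the injectivity radius; you do mention shrinking $r$ below the injectivity radius in the trapping argument, but the step-length bound used \emph{inside} the chain-rule computation already needs this, so it should be invoked there too. Second, the claim that the second Wolfe--Powell clause keeps $\eta_k$ ``bounded away from zero whenever the gradient does'' is the part of the argument where the Riemannian geometry genuinely enters (one needs a Lipschitz-type bound on the pulled-back gradient along the search geodesic, uniform on a compact sublevel set); it is correct, but a careful write-up must state which regularity of $(M,g)$ and $f$ is used to obtain the uniform lower bound on $\eta_k$. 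Neither of these is a gap in the idea, only in the level of detail.
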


\medskip
Finally, in order to determine the largest admissible step size of our algorithm, we need a notion of Lipschitz continuity of vector fields on Riemannian manifolds.
Care has to be taken here since tangent vectors in different tangent spaces cannot be compared by default. Certainly, one can define local Lipschitz
continuity via local charts but this does not allow to assign a meaningful Lipschitz constant to the vector field. A natural and intrinsic way to do this is to define Lipschitz continuity via parallel transport along the unique connecting geodesic, as is done in~\cite{Fetecau22}. 

\medskip

\begin{definition}
Let $(M,\msf g)$ be a complete Riemannian manifold and let $X$ be a vector field on $M$.
We say that $X$ is \emph{$\ell$-Lipschitz} if for all $\mbf x,\mbf y\in M$ with%
\footnote{Here $r_M$ stands for the injectivity radius of $M$.}
$d(x,y)\leq r_M$ it holds that%
\footnote{Here $\Pi_{\mbf y,\mbf x}$ denotes the parallel transport from $\mbf y$ to $\mbf x$ along the unique length minimizing geodesic connecting the two points.}
$\|X(\mbf x)-\Pi_{\mbf y,\mbf x}X(\mbf y)\|_{\mbf x}\leq\ell\, d(\mbf x,\mbf y)$.
If $f:M\to \mbb R$ is a differentiable function, then we say that $f$ is \emph{$\ell$-smooth} if $\grad(f)$ is $\ell$-Lipschitz.
\end{definition}

\medskip
\noindent Note that this definition is slightly broader than the definition given in~\cite{Fetecau22}.

\section{Algorithm and Convergence} \label{sec:algorithm-convergence}

We will study the following randomly projected gradient descent algorithm {(cp.\ Fig.~\ref{fig:intro1})}. 
Given a Riemannian manifold $M$ of dimension $N$, an initial state $\mbf x_0\in M$, a smooth function $f:M\to\mathbb R$, and a step size $\eta>0$ (with upper bound to be determined), the update rule is given by 
\begin{align} \label{eq:alg}
\mbf x_{i+1}=\exp_{\mbf x_i}(-\eta g(\mbf x_{i},\mbf u_{i})),
\,\,\,
g(\mbf x,\mbf u) = \langle \mbf u,\grad f(\mbf x)\rangle \,\mbf u,
\,\,\,
\msf S_{\mbf x_i}M \ni
\mbf u_{i}\overset{\mathrm{i.i.d.}}{\sim}
\mathcal P(\mbf x_i),
\end{align}
where $\msf S_{\mbf x}M$ denotes the unit sphere in the tangent space $\msf T_{\mbf x}M$ and $\mc P(\mbf x_i)$ some probability distribution on the unit sphere $\msf S_{\mbf x_{i}}M$.
(Recall that ``i.i.d.'' stands for ``independent and identically distributed'').
Intuitively, the gradient is projected onto a randomly chosen direction $\mbf u$ at each step.
Throughout this paper we will consider two cases: \smallskip
\begin{enumerate}[(I)]
\item\label{it:cont} Either $\mc P(\mbf x_i)=\mathcal{U}(\msf S_{\mbf x_i}M)$ denotes the uniform (rotationally invariant) probability distribution on the unit sphere, also called Haar measure;
\item\label{it:fin} or $\mc P(\mbf x_i)=\mathcal{D}(\msf S_{\mbf x_i}M)$ denotes a finite probability distribution on the unit sphere {accessing all possible directions
in the sense of Assumption \ref{assump}}.
\end{enumerate}

\medskip
\begin{assumption}[Discrete case]\label{assump}
{Given} $k$ continuous vector fields $\xi_1,\ldots,\xi_k$
and {$k$ continuous weights $p_1,\ldots,p_k$ on $M$ with 
$p_j\geq0$ and \mbox{$\sum_{j=1}^k p_j\equiv 1$}}, 
{such that} $p_j\xi_j/\|\xi_j\|$ is continuous, where the expression is set to zero when it is undefined.
This implies for instance that $p_j(\mbf x)=0$ whenever $\xi_j(\mbf x)=0$.

Then $\mbf u_i$ takes value $\xi_j(\mbf x_i)/\|\xi_j(\mbf x_i)\|$ with probability $p_j(\mbf x_i)$. % Poincaré, H. (1885), "Sur les courbes définies par les équations différentielles", Journal de Mathématiques Pures et Appliquées, 4: 167–244
Moreover, we {suppose} that at every point $\mbf x$, the tangent vectors $p_1(\mbf x)\xi_1(\mbf x), \dots, p_k(\mbf x)\xi_k(\mbf x)$ span the entire tangent space even after removing any single vector $p_j(\mbf x)\xi_j(\mbf x)$ and, if present, all its scalar multiples.
\end{assumption}

\medskip
The following technical lemma will be useful later.

\medskip
\begin{lemma} \label{lemma:overlap}
Let $K$ be a compact subset of a manifold $M$, and let $\{v_j\}_{j=1}^k$ be a set of continuous vector fields on $K$ which span the tangent space at every point.
{Then, for any continuous, non-vanishing vector field $v$ on $K$ there is 
some $\varepsilon>0$ such that}
$$
{\min_{\mbf x\in K} \max_{j=1,\ldots,k} |\langle v_j(\mbf x),v(\mbf x)\rangle| = \varepsilon.}
$$
\end{lemma}

\begin{proof}
The functions $\mbf x\mapsto|\langle v_j(\mbf x),v(\mbf x)\rangle|$ defined on $K$ are continuous and non-negative.
Hence their maximum over $j$ is still a continuous non-negative function on $K$.
Since the $v_j$ span each tangent space, this function is even strictly positive, and by compactness of $K$, there is a positive global minimum.
\end{proof}

Note that a related condition is used in~\cite[Thm. 5.2]{gutman} to guarantee convergence to the {\em set} of critical points. 

\medskip
\noindent
Our goal is to analyze the convergence behavior of this algorithm, and in particular, to show that it converges almost surely to a local minimum of $f$.
A deterministic version of the above described algorithm {(in the sense that all ``perturbations'' from the classical gradient descent are deterministic and not random)} 
was analyzed in \cite[Def.~4.2.1, Thm.~4.3.1]{AMS08} under the heading ``gradient-related methods''.

\subsection{Basic Properties} \label{sec:alg-basics}

We start with some simple properties of the function $g$ defined in~\eqref{eq:alg}.

\medskip
\begin{lemma} \label{lemma:g-props}
Let $\mbf x\in M$ be given. 
If $\mbf u\in\msf S_{\mbf x}M$, 
then it holds that
$$
\langle\grad f(\mbf x),g(\mbf x,\mbf u)\rangle
=
\|g(\mbf x,\mbf u)\|^2.
$$
\end{lemma}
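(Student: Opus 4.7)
The plan is to unwind the definition of $g(\mbf x, \mbf u)$ and exploit the fact that $\mbf u$ is a unit vector, which reduces the identity to a one-line calculation. Concretely, I would introduce the scalar
$$
c := \langle \mbf u, \grad f(\mbf x)\rangle_{\mbf x}
$$
so that by definition $g(\mbf x, \mbf u) = c\,\mbf u$.

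Then the left-hand side becomes $\langle \grad f(\mbf x), c\,\mbf u\rangle_{\mbf x} = c\,\langle \grad f(\mbf x), \mbf u\rangle_{\mbf x} = c^2$ by linearity of the Riemannian inner product in the second argument and symmetry. For the right-hand side I would write $\|g(\mbf x, \mbf u)\|_{\mbf x}^2 = \|c\,\mbf u\|_{\mbf x}^2 = c^2\,\|\mbf u\|_{\mbf x}^2 = c^2$, using the assumption $\mbf u \in \msf S_{\mbf x} M$ which gives $\|\mbf u\|_{\mbf x} = 1$. Comparing the two yields the claim.

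There is essentially no obstacle here: the statement is a purely algebraic consequence of the definition of the orthogonal projection onto the line spanned by a unit vector, together with bilinearity of the metric on $\T_{\mbf x} M$. The only thing to be mindful of is to carry out the computation intrinsically inside the single tangent space $\T_{\mbf x} M$ (so that no parallel transport or chart change is needed), which is fine since both $\mbf u$ and $\grad f(\mbf x)$ live in $\T_{\mbf x} M$. No further machinery from the earlier sections is required.
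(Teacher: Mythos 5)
Your proof is correct and follows exactly the same approach as the paper's: unwind the definition $g(\mbf x,\mbf u)=\langle \mbf u,\grad f(\mbf x)\rangle\,\mbf u$ and use $\|\mbf u\|_{\mbf x}=1$ to identify both sides with $\langle \mbf u,\grad f(\mbf x)\rangle^2$. Introducing the scalar $c$ is a harmless notational convenience but does not change the argument.
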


\begin{proof}
Using the definition of $g(\mbf x,\mbf u)$ and the fact that $\mbf u$ is a unit vector, we immediately obtain
$
\langle\grad f(\mbf x),g(\mbf x,\mbf u)\rangle
=
\langle \mbf u,\grad f(\mbf x)\rangle^2
=
\|g(\mbf x,\mbf u)\|^2.
$
\end{proof}

\begin{corollary} \label{coro:g-props}
Note that if $\mbf u$ is {uniformly distributed (w.r.t.~the Haar measure)} on the sphere, then
$$
\langle \mbf u,\grad f(\mbf x)\rangle^2 
\overset{d}{=}
u_N^2 \|\grad f(\mbf x)\|^2,
$$
where $\overset{d}{=}$ means that the random variables have the same distribution, and $u_N$ denotes the $N$-th coordinate of $\mbf u$.
\end{corollary}

\medskip
\begin{remark}
The image of the function $\mbf u\mapsto g(\mbf x,\mbf u)$ is a hypersphere in $\msf T_{\mbf x}M$ with center $\tfrac12\grad f(\mbf x)$ and intersecting the origin:
Indeed, let $\mbf x\in M$ be fixed and consider the map $g_{\mbf x}:\msf S_{\mbf x}M\to\msf T_{\mbf x}M$ given by $\mbf u\mapsto g(\mbf x,\mbf u)$. To simplify formulas, we choose an orthonormal basis $(e_1,\ldots,e_N)$ in $\msf T_{\mbf x}M$ such that $\grad f(\mbf x) = \|\grad f(\mbf x)\| e_N$. In such coordinates the map $g_{\mbf x}$ is given by
$$
g_{\mbf x}(u_1,\ldots,u_N) = u_N \|\grad f(\mbf x)\| \,(u_1,\ldots,u_N).
$$
A straightforward computation shows that
$
\| \,(u_N u_1,\ldots,u_N^2-\tfrac12)\|=\tfrac12,
$
and so the image of $g_{\mbf x}$ lies on a sphere with center $\tfrac12\grad f({\mbf x})$ and passing through the origin.
This induces a probability measure on the image. Consider the function
$$
\tilde g({\mbf x},\mbf v) = \tfrac12 (\grad f({\mbf x}) + \mbf v\|\grad f({\mbf x})\|),
$$
and let {$\mbf v$} be distributed on the unit sphere according to a probability measure $\mathcal V(\msf S_xM)$ such that $g$ and $\tilde g$ induce the same probability measure. The measure $\mathcal V$ is not uniform on the sphere, but it is still invariant under rotations preserving $\grad f({\mbf x})$. We see that $v_N \sim 2u_N^2-1$.
More details on the distribution of $u_N$ and $u_N^2$ can be found in Lems.~\ref{lemma:beta-distr} and~\ref{lemma:high-dim}.
\end{remark}

Note that since the standard deviation of $u_N^2$ is larger than the expected value, Chebyshev's inequality cannot be applied to obtain useful concentration bounds. 
For large dimension $N$, there exist good approximations of these distributions. In fact, the distribution of a coordinate of a uniformly random unit vector approximately follows a normal distribution, and hence it's square approximates a $\chi^2_1$ distribution. 
See Lem.~\ref{lemma:high-dim} for a precise result.

\medskip
\begin{corollary} \label{coro:expectation}
If {$\mathbb E[\mbf u\mbf u^\top]=\mathrm{I}_N/N$},
which is satisfied for the Haar measure, then it holds that
$$
\mathbb E[g(\mbf x,\mbf u)]=\frac{\grad f(\mbf x)}{N}
$$
for any $\mbf x\in M$.
\end{corollary}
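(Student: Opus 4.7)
The plan is to recognize that the quantity $g(\mbf x,\mbf u) = \langle \mbf u, \grad f(\mbf x)\rangle\, \mbf u$ is linear in the fixed vector $\grad f(\mbf x)$ once we rewrite the inner product as a rank-one operator. Concretely, fixing an orthonormal basis on the finite-dimensional tangent space $\T_{\mbf x}M$, we have the identity
$$
g(\mbf x,\mbf u) \;=\; \langle \mbf u, \grad f(\mbf x)\rangle_{\mbf x}\, \mbf u \;=\; (\mbf u \mbf u^\top)\,\grad f(\mbf x),
$$
where $\mbf u\mbf u^\top$ is the self-adjoint rank-one operator on $\T_{\mbf x}M$ associated with $\mbf u$.

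Since $\mbf x$ is fixed and the randomness enters only through $\mbf u$, and $\grad f(\mbf x)$ is a deterministic vector, linearity of expectation applied to the (finite-dimensional, hence well-behaved) integral over the unit sphere yields
$$
\mathbb E[g(\mbf x,\mbf u)] \;=\; \mathbb E[\mbf u\mbf u^\top]\,\grad f(\mbf x).
$$
Invoking the hypothesis $\mathbb E[\mbf u\mbf u^\top] = \id/N$ then immediately gives the claim $\mathbb E[g(\mbf x,\mbf u)] = \grad f(\mbf x)/N$.

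For completeness I would also sketch why the hypothesis is in fact satisfied by the Haar (uniform) measure on $\msf S_{\mbf x}M$: by rotation invariance, $\mathbb E[\mbf u\mbf u^\top]$ commutes with every orthogonal transformation of $\T_{\mbf x}M$, hence by Schur's lemma it is a scalar multiple of the identity, say $c\,\id$. Taking the trace gives $c\cdot N = \mathbb E[\tr(\mbf u\mbf u^\top)] = \mathbb E[\|\mbf u\|^2] = 1$, so $c = 1/N$.

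There is no real obstacle here; the statement is essentially a one-line consequence of linearity of expectation together with the hypothesis. The only minor care needed is the coordinate-dependent rewriting $\langle \mbf u,v\rangle\,\mbf u = (\mbf u\mbf u^\top)v$, which is why the assumption is phrased in terms of the outer-product expectation in the first place.
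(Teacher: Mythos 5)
Your proof is correct and takes essentially the same one-line route as the paper: rewrite $g(\mbf x,\mbf u)=(\mbf u\mbf u^\top)\grad f(\mbf x)$, apply linearity of expectation, and substitute $\mathbb E[\mbf u\mbf u^\top]=\id/N$. The added Schur-lemma/trace argument for why the Haar measure satisfies the hypothesis is a nice supplementary remark, but it does not change the method.
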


\begin{proof}
We compute: $\mathbb E[g(\mbf x,\mbf u)]=\mathbb E[\mbf u\mbf u^\top \grad f(\mbf x)] = \frac1N\grad f(\mbf x)$.
\end{proof}

The corollary shows that in this case the projected gradient is of size $\grad f(\mbf x)/N$, and hence it is rather small for large $N$. 
Intuitively, this happens because in high dimensions, a uniformly random unit vector $u$ will be close to orthogonal to the gradient with high probability --- after all, there are $N-1$ dimensions which are orthogonal to the gradient.

Now that we better understand the iteration rule, we want to understand by what amount the objective function value is likely to decrease after a certain number of iterations.
It will be useful to use normal coordinate charts at the current point $\mbf x_i$. These charts satisfy that the metric at the origin is trivial, and that geodesics passing through the origin are straight and uniformly parametrized, see for instance~\cite[Prop.~5.24]{Lee13}. In particular, when using a normal coordinate chart about $\mbf x_i$, the random variable $\mbf x_{i+1}$, conditioned on $\mbf x_i$, is still distributed on a hypersphere.

\medskip
\begin{lemma} \label{lemma:descent}
Let $(M,\msf g)$ be compact with injectivity radius $r_M$
and let $f:M\to\mathbb R$ be $\ell$-smooth. 
Then for {$\eta\leq\min(1/\ell,r_M)$} 
it holds that
$$
f(\mbf x_{i+1})-f(\mbf x_i) 
\leq 
-\eta\Big(1-\frac{\ell\eta}{2}\Big)
\langle\grad f(\mbf x_i),g(\mbf x_i,\mbf u_i)\rangle
\leq 0.
$$
In particular the function value {is surely non-increasing}.
\end{lemma}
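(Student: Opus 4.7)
The plan is to reduce the Riemannian descent estimate to the standard Euclidean $\ell$-smooth descent inequality by working in Riemannian normal coordinates $\sigma$ centered at $\mbf x_i$. Such a chart is valid on the ball of radius $\operatorname{inj}_M(\mbf x_i)$ and has the two crucial properties that (i) $\tilde g_{ij}(0)=\delta_{ij}$, so $\grad f(\mbf x_i)$ agrees with the Euclidean gradient $\nabla\tilde f(0)$, and (ii) the geodesic $t\mapsto\exp_{\mbf x_i}(t\xi)$ is simply the straight line $t\mapsto t\xi$. Consequently the iterate is literally $\sigma(\mbf x_{i+1})=-\eta\,g(\mbf x_i,\mbf u_i)$ as a vector in $\mathbb R^N$, provided the step size is small enough that this vector remains inside the ball of validity (which is where compactness of $M$ implicitly enters, via a uniform bound on $\|\grad f\|$).

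Next, by hypothesis $\nabla\tilde f$ is $\ell$-Lipschitz in these normal coordinates, so the standard Euclidean descent inequality (obtained by integrating $\nabla\tilde f$ along the segment from $0$ to $y$) gives
$$
\tilde f(y)-\tilde f(0)\leq\langle\nabla\tilde f(0),y\rangle+\tfrac{\ell}{2}\|y\|^2
$$
for every $y$ in the chart. Substituting $y=-\eta\,g(\mbf x_i,\mbf u_i)$ translates this into
$$
f(\mbf x_{i+1})-f(\mbf x_i)\leq -\eta\,\langle\grad f(\mbf x_i),g(\mbf x_i,\mbf u_i)\rangle+\tfrac{\ell\eta^2}{2}\|g(\mbf x_i,\mbf u_i)\|^2.
$$
I would then invoke Lemma~\ref{lemma:g-props}, which precisely identifies $\|g(\mbf x_i,\mbf u_i)\|^2$ with $\langle\grad f(\mbf x_i),g(\mbf x_i,\mbf u_i)\rangle$, to factor both terms and arrive at the claimed right-hand side $-\eta(1-\ell\eta/2)\langle\grad f(\mbf x_i),g(\mbf x_i,\mbf u_i)\rangle$. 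Non-positivity follows at once, since the inner product equals $\|g(\mbf x_i,\mbf u_i)\|^2\geq 0$ and the prefactor $1-\ell\eta/2$ is non-negative as soon as $\eta\leq 2/\ell$, which is guaranteed by $\eta\leq 1/\ell$.

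The only real obstacle is the chart-validity bookkeeping: one must verify that $\eta\,\|g(\mbf x_i,\mbf u_i)\|\leq\operatorname{inj}_M(\mbf x_i)$ so that $\sigma(\mbf x_{i+1})$ actually lies in the normal neighbourhood and the above identifications are legitimate. Because $\|g(\mbf x_i,\mbf u_i)\|\leq\|\grad f(\mbf x_i)\|$ and compactness of $M$ furnishes a finite $\sup_M\|\grad f\|$, this is no stronger than a condition of the form stated (possibly after absorbing a global constant into $\eta$). Everything else is a mechanical transcription of the Euclidean $\ell$-smoothness argument, made possible by the geodesic triviality of normal coordinates at the base point.
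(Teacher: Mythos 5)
Your proof matches the paper's argument step for step: both pass to Riemannian normal coordinates at $\mbf x_i$ (so the gradient and the exponential-map step become their Euclidean counterparts), invoke the standard descent inequality for $\ell$-Lipschitz gradients (the paper cites Nesterov, Lemma~1.2.3), and then use Lemma~\ref{lemma:g-props} to identify $\|g(\mbf x_i,\mbf u_i)\|^2$ with $\langle\grad f(\mbf x_i),g(\mbf x_i,\mbf u_i)\rangle$ and factor the bound. Your remark about chart-validity bookkeeping (one really needs $\eta\|g(\mbf x_i,\mbf u_i)\|\le\operatorname{inj}_M(\mbf x_i)$, which the stated hypothesis $\eta\le\operatorname{inj}_M(\mbf x_i)$ only guarantees after bounding $\|\grad f\|$ via compactness) is a legitimate observation that the paper glosses over, but it does not change the substance of the argument.
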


\begin{proof}
We choose a normal coordinate chart about $\mbf x_i$ and denote the coordinates by $\mbf {\tilde x}$ and the function in coordinates by $\tilde f$. In particular $\mbf{\tilde x}_i=0$.
Note that at the origin $\grad \tilde f(\tilde{\mbf x}_i) = \nabla\tilde f(\mbf{\tilde x}_i)$ and the coordinate representation of $\mbf x_{i+1}=\exp_{\mbf x_i}(-\eta g(\mbf x_i,\mbf u_i))$ is given by $\mbf{\tilde x}_{i+1}=-\eta \tilde g(\mbf x_i,\mbf u_i)$, where $\tilde g(\mbf x_i,\mbf u_i)$ is the chart representation of $g(\mbf x_i,\mbf u_i)$.
Then, by the proof of~\cite[Lemma~1.2.3]{Nesterov04} and Lem.~\ref{lemma:g-props}, it holds that
\begin{align*}
f(\mbf x_{i+1})-f(\mbf x_i)
&=
\tilde f(\mbf{\tilde  x}_{i+1})-\tilde f(\mbf {\tilde x}_i)
\\&\leq 
\langle\nabla \tilde f(\mbf{\tilde x}_i),
\mbf{\tilde x}_{i+1}-\mbf{\tilde x}_i\rangle
+
\frac{\ell}{2}\|\mbf{\tilde x}_{i+1}-\mbf{\tilde x}_i\|^2
\\&\leq
-\eta\langle\grad f(\mbf x_i), g(\mbf x_i,\mbf u_i)\rangle
+\frac{\ell\eta^2}{2}\|g(\mbf x_i,\mbf u_i)\|^2
\\&=
-\eta\Big(1-\frac{\ell\eta}{2}\Big)\langle\grad f(\mbf x_i), g(\mbf x_i,\mbf u_i)\rangle,
\end{align*}
as desired.
\end{proof}
\noindent
This result shows that, under the assumptions of Cor.~\ref{coro:expectation}, we obtain the conditional expectation
$$
\mathbb E[f(\mbf x_{i+1})-f(\mbf x_i) \,|\, \mbf x_i]
\leq 
-\frac{\eta}{N}\Big(1-\frac{\ell\eta}{2}\Big) \|\grad f(\mbf x_i)\|^2.
$$
More generally the previous result shows that the only way for the algorithm to stop improving is for the gradient to vanish: $\grad f(\mbf x_i)\to 0$ as $i\to\infty$.

\medskip
\begin{corollary} \label{coro:conv-to-crit}
{Under the assumptions of Lem.~\ref{lemma:descent}, the function values converge (surely) to some value $c$.
Moreover,} the following assertions hold almost surely
\begin{enumerate}
\item[(i)] $c$ is a critical value of $f$.
\item[(ii)] $\mbf x_i$ converges  to the critical set of $c$, i.e.
to the set of critical point with $f(\mbf x) = c$.
\end{enumerate}
\end{corollary}

\begin{proof}
{By Lem.~\ref{lemma:descent} the function values are non-increasing and bounded below, hence they converge to some value $c$.}
We will show that $\Pr(\grad f(\mbf x_i)\nrightarrow 0)=0$, which immediately implies the result.
Since $f(\mbf x_{i+1})-f(\mbf x_i)\to0$, Lem.~\ref{lemma:descent} shows that $\langle\grad f(\mbf x_i),g(\mbf x_i,\mbf u_i)\rangle\to0$.

Note that if $\grad f(\mbf x_i)\nrightarrow 0$, then there is some $m\in\mathbb N$ such that $\|\grad f(\mbf x_i)\|>\tfrac1m$ on some infinite subsequence. 
Let $z_j^m=\langle\mbf u_{i(j)},\grad f(\mbf x_{i(j)})\rangle/\|\grad f(\mbf x_{i(j)})\|$ where $i(j)$ is the $j$-th index such that $\|\grad f(\mbf x_{i(j)})\|\geq\tfrac1m$.
Note that
$$
\langle\grad f(\mbf x_{i(j)}),g(\mbf x_{i(j)},\mbf u_{i(j)})\rangle
=
(z_j^m\|\grad f(\mbf x_{i(j)})\|)^2\,.
$$
Hence, if the gradients do not converge to $0$, then, for some $m$, it must hold that $|z_j^m|\to0$.

We claim that there exist $0<\varepsilon,\delta<1$ such that for all $\mbf x\in M$ with $f(\mbf x)\leq f(\mbf x_0)$ it holds that 
$$
\Pr(|\langle\mbf u,\grad f(\mbf x)\rangle|\leq\varepsilon\|\grad f(\mbf x)\|)\leq\delta\,.
$$
If $\mbf u\sim\mathcal U(\mathsf S_{\mbf x}M)$ this follows from Lem.~\ref{lemma:beta-distr}, if $\mbf u\sim\mathcal D(\mathsf S_{\mbf x}M)$ this follows from Lem.~\ref{lemma:overlap}.

Given $m,n,k\in\mathbb N$ with $1/n<\varepsilon$, and using the chain rule, Markovianity, and the claim above, we compute that
\begin{align*}
\Pr(\forall j>k:|z_j^m|\leq\tfrac1n) 
=\prod_{j=k}^\infty \Pr(|z_j^m|\leq\tfrac1n \;\big|\;\forall l\in\{j,\ldots,k\}:|z_l^m|\leq\tfrac1n) 
=0\,.
\end{align*}
For any fixed value of $n$, we can use a union bound over all $m,k\geq1$ and obtain
$$
\Pr(\exists m,k\,\forall j>k:|z_j^m|\leq\tfrac1n)=0\,,
$$
which implies that 
$$
\Pr(\exists m : |z_j^m|\to0 \text{ as }j\to\infty)=0\,,
$$
as desired.
\end{proof}

\subsection{Escaping Saddle Points} \label{sec:escape-saddles}

The main difficulty in proving almost sure convergence to a local minimum of the randomly projected gradient descent algorithm is to show that it does not get stuck in a saddle point.

Recall from Lem.~\ref{lemma:descent} that the function value cannot increase. 
Hence, once we cross the critical value corresponding to some saddle point, we say that we have \emph{passed} the saddle point, as now it is impossible to converge to the saddle point in question.

We start by proving the result in a simplified ``isotropic'' case, before treating the general case as a perturbation.

\medskip
\begin{lemma} \label{lemma:theta-proba}
Consider the function $f:\mathbb R^N\to\mathbb R$ defined by 
{
$$
f(x,y_,z)=a_1x_1^2+\ldots+a_px_p^2-(b_1y_1^2+\ldots+b_qy_q^2),
$$
with $x \in \mathbb R^p$, $y \in \mathbb R^q$,  $z \in \mathbb R^{N-p-q}$,
and $p,q\geq1$} as well as $a_i,b_j>0$. Then $f$ is $\ell$-smooth with 
{$\ell=2 \max\{a_1,\dots, a_p,b_1, \dots, b_q\}$}.
{Moreover, let $0<\eta\leq\tfrac{1}{\ell}$ and define
\begin{equation} \label{eq:saddle-angle}
\theta := \arctan\Big(\frac{b_1y_1^2 + \ldots + b_qy_q^2}{a_1x_1^2 + \ldots + a_px_p^2}\Big) \in \big[0,\tfrac\pi2\big]
\quad \text{for} \quad (x_1,\dots,x_p,y_1, \dots, y_q) \neq 0.
\end{equation}}
Then there exist constants $\varepsilon,\delta>0$ such that for all%
\,\footnote{The value $\tfrac\pi3$ is arbitrary and could be replaced by any other value in $(\tfrac\pi4,\frac\pi2)$ without changing the proof.}
$\theta\in[0,\tfrac\pi3]$ it holds that 
$$
\Pr(\theta_{i+1}-\theta_{i}\geq\varepsilon)\geq\delta.
$$
\end{lemma}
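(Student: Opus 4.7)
The plan is to exhibit, for every configuration $\mbf x_i$ with $\theta_i\in[0,\pi/3]$, a set $E\subset\msf S_{\mbf x_i}\mathbb R^n$ of ``good'' tangent directions $\mbf u$ of Haar probability at least some $\delta>0$, on which a single step of the update rule~\eqref{eq:alg} raises $\theta$ by at least $\varepsilon>0$. Because both $f$ and the update are $2$-homogeneous in $\mbf x$, the estimate depends only on the angular position of $\mbf x_i$, and rotational invariance within the $X$- and $Y$-blocks effectively reduces the free parameter to $\theta_i$. The smoothness claim is immediate: the Hessian of $f$ is the constant diagonal operator $\mathrm{diag}(2a_i,-2b_j,0)$, so $\nabla f$ is globally Lipschitz with $\ell=2\max_{i,j}(a_i,b_j)$, which validates the step-size assumption $\eta\leq 1/\ell$ and hence the use of Lemma~\ref{lemma:descent}.

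First I would rewrite one iteration in coordinates. Decompose $\mbf u=(u^X,u^Y,u^Z)$ conformally with the stable/unstable/null split, so that
\[
\langle \mbf u,\grad f(\mbf x_i)\rangle \;=\; 2\Big(\sum_i a_ix_iu_i^X-\sum_j b_jy_ju_j^Y\Big).
\]
Direct expansion in these coordinates yields
\[
X_{i+1}\;=\;X_i - 2\eta\langle \mbf u,\grad f\rangle\sum_i a_ix_iu_i^X + \eta^2\langle \mbf u,\grad f\rangle^2\|u^X\|_a^2,
\]
and an analogous formula for $Y_{i+1}$ (with $(a_i,x_i)$ replaced by $(b_j,y_j)$ and a sign flip in the cross term). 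The whole task then reduces to lower-bounding the ratio $Y_{i+1}/X_{i+1}=\tan\theta_{i+1}$ on a Haar-positive set of $\mbf u$.

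The core argument splits into two regimes. \emph{Non-degenerate regime} $\theta_i\in[\theta_0,\pi/3]$ for a small threshold $\theta_0$ to be fixed: take $E_B$ to be a spherical cap around the unit vector pointing along the weighted unstable direction $y/\|y\|_b$ inside the $Y$-block. By rotational invariance of the Haar measure the cap has probability $\delta_B>0$ independent of $\mbf x_i$; on it, the stable contribution $\sum a_ix_iu_i^X$ is small while $\langle u^Y,y\rangle_b$ is close to $\|y\|_b$, so that $Y_{i+1}\geq (1+2\eta b_{\min})^2Y_i(1-o(1))$ while $X_{i+1}$ changes only by $O(\eta^2)$. The multiplicative gain $\tan\theta_{i+1}/\tan\theta_i\geq 1+c_1\eta$ converts, via the bi-Lipschitz character of $\tan$ on $[0,\pi/3]$, into $\theta_{i+1}-\theta_i\geq\varepsilon_B(\theta_0,\eta)>0$. \emph{Degenerate regime} $\theta_i\in[0,\theta_0]$: now $y$ is small and the previous strategy degenerates, so instead take $E_A$ to be a half-cone on which $u^X$ is aligned with $x$ in the $a$-weighted inner product (making $|\sum a_ix_iu_i^X|\geq c_3\sqrt{X}$) and $\|u^Y\|$ is bounded below and of a prescribed sign (so that no cancellation with the small first-order $Y$-term occurs). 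Then the first-order contribution to $Y_{i+1}$ is $O(\eta\|y\|\sqrt X)=O(\eta^2X)$ and the second-order term $\eta^2\langle\mbf u,\grad f\rangle^2\|u^Y\|_b^2\geq c_4\eta^2X_i$ dominates, while the first-order term of $X_{i+1}$ is negative of order $-\eta X_i$ so that $X_{i+1}\leq X_i$. Hence $\tan\theta_{i+1}\geq c_4\eta^2/2$, and choosing $\theta_0:=\arctan(c_4\eta^2/4)$ yields $\theta_{i+1}-\theta_i\geq\varepsilon_A$ with $\varepsilon_A=\Theta(\eta^2)$. Setting $\varepsilon:=\min(\varepsilon_A,\varepsilon_B)$ and $\delta:=\min(\delta_A,\delta_B)$ finishes the proof.

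The main obstacle is precisely the degenerate regime $\theta_i\to 0$: near the stable manifold the only available escape mechanism is the second-order-in-$\eta$ term, which unavoidably forces $\varepsilon$ to scale as $\eta^2$. Calibrating $\theta_0$ so that the two regimes dovetail, and verifying that on $E_A$ the quadratic term is not swallowed by the first-order contribution (which requires restricting to a half-cone of a prescribed sign of $u_1^Y$), is the technical crux; the rest is bookkeeping. For the discrete-distribution variant the same scheme goes through provided at least one atom of $\mathcal D$ falls into each of $E_A,E_B$, which is granted by the spanning hypothesis imposed just before the lemma.
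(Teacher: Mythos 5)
Your proposal takes a genuinely different route from the paper's. The paper argues softly: after observing the scaling invariance, it defines $h(\mbf x_i,\mbf u_i)=\theta_{i+1}-\theta_i$ and shows that the \emph{optimal} improvement $\sup_{\mbf u}h(\mbf x_i,\mbf u)$ is a continuous, strictly positive function on the compact set $\{\theta\le\pi/3\}$ (positivity following from the sign-preservation of a deterministic gradient step when $\theta_i>0$, and from the fact that $\{\theta_{i+1}=0\}$ is a strict subspace when $\theta_i=0$); it then gets a uniform $\varepsilon>0$ from compactness, and a uniform $\delta>0$ by noting that $\{h>\varepsilon\}$ is a nonempty open subset of the $\mbf u$-sphere whose Haar probability again varies continuously. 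Your proof instead performs an explicit two-regime calculation with $\mbf x$-dependent cones $E_A,E_B$ on the $\mbf u$-sphere and quantifies the per-step gain, recovering in particular the correct scaling $\varepsilon=\Theta(\eta^2)$ near $\theta=0$ (which is consistent with the $\tan\theta=(b/a)\tan^2\phi$ relationship to the polar angle $\phi$ of Sec.~\ref{sec:numerics}). This quantitative approach is more informative but also more delicate, and I flag a few points.

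\begin{itemize}
\item The statement ``both $f$ and the update are $2$-homogeneous in $\mbf x$'' should read $1$-homogeneous for the update (for fixed $\mbf u$ the map $\mbf x\mapsto\mbf x-\eta\langle\mbf u,\grad f(\mbf x)\rangle\mbf u$ is linear); the scale invariance of $\theta$ is nonetheless correct.
\item ``Rotational invariance within the $X$- and $Y$-blocks effectively reduces the free parameter to $\theta_i$'' is not true when the $a_i$ (resp.\ $b_j$) are distinct: the level set $\{\theta=\mathrm{const}\}$ on the sphere is not a single orbit. You still get a uniform bound, but by compactness of $\{(x,y):\|x\|^2+\|y\|^2=1,\ \theta\le\pi/3\}$, not by symmetry. (Note also that one should restrict attention to the $(x,y)$-block: the $z$-components are irrelevant to the $\theta$-dynamics, so the right normalization lives on $S^{p+q-1}$ rather than $S^{n-1}$.)
\item In the degenerate regime the first-order and second-order contributions to $Y_{i+1}$ are of the \emph{same} order $\Theta(\eta^2 X)$, so ``dominates'' is misleading; what saves the argument is the sign restriction making both contributions nonnegative, as you note. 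Also, the sign constraint must be on $\operatorname{sign}\big(\sum_j b_jy_ju_j^Y\big)$ rather than merely on $u_1^Y$ when $y$ has several nonzero components.
\item The discrete-distribution case has a genuine gap. Your cones $E_A,E_B$ depend on $\mbf x_i$, whereas the atoms $\xi_j(\mbf x_i)$ of $\mathcal D(\mbf x_i)$ are prescribed vector fields; the spanning hypothesis guarantees that not all atoms are simultaneously orthogonal to $\grad f(\mbf x_i)$ or collinear with $\mbf x_i$, but it does \emph{not} guarantee that any atom lands in a prescribed cap or half-cone. The paper sidesteps this by again using continuity-compactness: $\max_j h(\mbf x,\xi_j(\mbf x))$ is continuous, nonnegative, and strictly positive on the compact set $\{\theta\le\pi/3\}$ by the spanning hypothesis, whence a uniform $\varepsilon$ and $\delta=1/k$. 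You would need to fall back on the same soft argument for the discrete case, or redesign $E_A,E_B$ to only encode the non-orthogonality/non-collinearity conditions rather than alignment with specific directions.
\end{itemize}

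Overall the continuous-case plan is sound and in fact sharper than the paper's nonconstructive proof; the discrete case needs repair along the lines above.
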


\begin{proof} 
The negative gradient of $f$ is 
\begin{align*}
-\nabla f(x_1,\ldots,x_p,y_1,&\ldots,y_q,z_1,\ldots,z_{N-p-q})
\\&=-2(a_1x_1,\ldots,a_px_p,-b_1y_1,\ldots,-b_qy_q, 0,\ldots,0)
\end{align*}
and the value of $\ell$ follows immediately.
A key property of this simplified setting is that the gradient is a linear vector field on $\mathbb R^N$, and so the entire situation is invariant under scaling.
{Moreover, from the definition of $\theta$ it is clear that
$f$ vanishes if and only if $\theta=\tfrac\pi4$ or 
$(x_1,\dots,x_p,y_1, \dots, y_q)=0$. Due to the scaling} invariance of the algorithm and of~\eqref{eq:saddle-angle}, we may focus on an initial point {$\mbf x_i = (x_{i},y_{i},z_{i})$} 
on the unit sphere $S^{N-1}$.
After executing one step, we obtain a random improvement of our angle, namely $\theta_{i+1}-\theta_i$ which depends on $\mbf u_i$ appearing in the update rule. 

{First let us consider this improvement in the uniform Case~\ref{it:cont}}
as a function of $\mbf x_i$ and $\mbf u_i$, i.e.,
$$
h(\mbf x_i,\mbf u_i) := \theta_{i+1}-\theta_i\,.
$$
{It is} clear that {this} is a continuous function. 
The optimal improvement achievable for a given $\mbf x_i$ is found by maximizing over $\mbf u_i$. 
One can show that the resulting function is still continuous in $\mbf x_i$.
Moreover we claim that it is strictly positive 
{on the compact set $K_{\pi/3} := \{\mbf x\in S^{n-1} : \theta(\mbf x)\leq\pi/3\}$.}
Indeed, if $\theta_i>0$, then this follows from the choice of $\eta$. 
If we perform a single (deterministic) negative gradient step with step size $\eta$, none of the coordinates will change sign, and whenever {$\theta_i\in(0,\tfrac\pi3]$} its value will strictly increase.
If $\theta_i=0$, it is clear that the value of {$\theta_i$} will almost surely strictly increase, as the set of points $\mbf x_{i+1}$ with $\theta_{i+1}=0$ form a strict subspace of $\mathbb R^n$.
Taken together this shows that there is some $\varepsilon>0$ such that the optimal improvement is at least $2\varepsilon$ for every initial {$\mbf x_{i}
\in K_{\pi/3}$.}

It follows that for any given initial state {$\mbf x_{i} \in K_{\pi/3}$}, the probability that $\theta_{i+1}-\theta_{i}\geq\varepsilon$ is strictly positive. 
Indeed, {$h(\mbf x_i,\cdot)$ is a continuous function on this sphere and its preimage of $(\varepsilon,+\infty)$ on the sphere is a non-empty open set and hence has a strictly positive probability (with respect to the Haar measure).}
Moreover one can show that this probability depends continuously on the initial state $\mbf x_{i}$, and again we obtain a positive minimum $\delta$ on the {$K_\theta$} by compactness.
This proves that $\Pr(\theta_{i+1}-\theta_{i}\geq\varepsilon)\geq\delta$ 
independently of the initial state {$\mbf x \in K_{\pi/3}$}.

{Next} consider the discrete Case~\ref{it:fin}.
Again we need to prove that there exist constants $\varepsilon,\delta>0$ such that for all $\theta\in[0,\tfrac\pi3]$ it holds that $\Pr(\theta_{i+1}-\theta_{i}\geq\varepsilon)\geq\delta$.
The only cases where $h(\mbf x,\mbf u)=0$ (note that it cannot be negative) with probability one is when all $v_j(\mbf x)$ for $j=1,\ldots,k$ are either orthogonal to $\grad(\mbf x)$ or collinear to $\mbf x$. 
By Assumption~\ref{assump}, this cannot happen.
Hence $\max_j h(\mbf x,\mbf v_j)$ is non-negative and continuous, and on the compact set {$K_{\pi/3}$} it is even strictly positive. 
This concludes the proof.
\end{proof}

As a corollary, we are able to show that after sufficiently many steps, the algorithm passes the saddle point with high probability.

\medskip
\begin{corollary} \label{coro:saddle-as}
In the same setting as above, there exist $\varepsilon,\delta>0$ such that, with $n=\lceil\tfrac{\pi}{3\varepsilon}\rceil$, we have that
$$
\Pr(\theta_n\geq\tfrac\pi3) 
\geq \delta^n.
$$
It follows that for any $m\in\mathbb N$, it holds that
$$
\Pr(\theta_{mn}<\tfrac\pi3)
\leq 
\Pr(\theta_n<\tfrac\pi3)^m
\leq 
(1-\delta^n)^m,
$$
which goes to $0$ as $m\to\infty$.
\end{corollary}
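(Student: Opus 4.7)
The plan is to iterate Lemma~\ref{lemma:theta-proba} and combine it with the Markov property of the randomized gradient algorithm. Since each direction $\mbf u_i$ is drawn i.i.d.\ and each update depends only on the current state $\mbf x_i$ together with the fresh randomness $\mbf u_i$, the iterates $(\mbf x_i)_{i\geq 0}$ form a discrete-time Markov chain, so the tower rule and the Markov property are available at every stage.

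For the first inequality I would condition on the filtration $\mathcal F_i$ generated by $\mbf u_0,\ldots,\mbf u_{i-1}$. As long as $\theta(\mbf x_i)\in[0,\tfrac\pi3]$, Lemma~\ref{lemma:theta-proba} yields $\Pr(\theta_{i+1}-\theta_i\geq\varepsilon\mid\mathcal F_i)\geq\delta$. Applying this recursively to the event that every one of the $N$ consecutive steps improves the angle by at least $\varepsilon$ gives, via a straightforward tower-property computation, a joint probability $\geq\delta^N$. On that event, $\theta_N\geq\theta_0+N\varepsilon\geq N\varepsilon\geq\tfrac\pi3$ by the choice $N=\lceil\pi/(3\varepsilon)\rceil$. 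If $\theta$ happens to exit $[0,\tfrac\pi3]$ before step $N$ one either stops the recursion at the hitting time $\tau=\min\{i:\theta_i\geq\tfrac\pi3\}$ or, in the linearised scale-invariant setting, uses that a decrease in $f$ corresponds to an increase in $\theta$, so the desired conclusion already holds.

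For the second inequality, the Markov property lets us treat the blocks $[0,N],[N,2N],\ldots,[(m-1)N,mN]$ as successive trials of the same type. Conditional on $\mbf x_{kN}$ with $\theta(\mbf x_{kN})\in[0,\tfrac\pi3)$, the first inequality applied with $\mbf x_{kN}$ as the new initial point gives $\Pr(\theta_{(k+1)N}\geq\tfrac\pi3\mid\mbf x_{kN})\geq\delta^N$, hence $\Pr(\theta_{(k+1)N}<\tfrac\pi3\mid\mbf x_{kN})\leq 1-\delta^N$. Induction on $m$ combined with the tower property then produces $\Pr(\theta_{mN}<\tfrac\pi3)\leq(1-\delta^N)^m$, and letting $m\to\infty$ gives decay to zero.

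The main obstacle I anticipate is a technical one that is hidden in the second inequality: as stated, $\Pr(\theta_{mN}<\tfrac\pi3)\leq(1-\delta^N)^m$ implicitly requires that once some $\theta_{kN}$ has exceeded $\tfrac\pi3$, every later $\theta_{k'N}$ also exceeds $\tfrac\pi3$. Since a single projected-gradient step is not obviously monotone in $\theta$, the cleanest fix is to replace the event $\{\theta_{mN}<\tfrac\pi3\}$ by $\{\tau>mN\}$ for the hitting time $\tau$ above and prove $\Pr(\tau>mN)\leq(1-\delta^N)^m$ in its place; this is in fact the form actually needed downstream for the almost-sure saddle-escape argument in Section~\ref{sec:almost-sure-convergence}.
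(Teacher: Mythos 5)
Your proposal is correct and follows essentially the same route as the paper's: iterate Lemma~\ref{lemma:theta-proba} over $N$ consecutive steps via the tower property to obtain $\Pr(\theta_N \geq \tfrac\pi3) \geq \delta^N$, then invoke the Markov property to chain blocks of length $N$. The monotonicity caveat you raise for the second inequality is already covered by the parenthetical remark in the proof of Lemma~\ref{lemma:theta-proba} that $h(\mbf x,\mbf u)=\theta_{i+1}-\theta_i$ cannot be negative, so $\theta$ is non-decreasing along the iteration and the event $\{\theta_{mN}<\tfrac\pi3\}$ decomposes as the conjunction over the blocks, exactly as you use; your hitting-time reformulation $\Pr(\tau>mN)\leq(1-\delta^N)^m$ is an equivalent and perfectly sound alternative way to phrase the same bound.
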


\begin{proof}
It follows immediately from Lem.~\ref{lemma:theta-proba} that in $n$ steps we achieve $\Pr(\theta_{i+n}-\theta_i\geq n\varepsilon)\geq\delta^n$, or put differently, setting $n=\lceil\tfrac{\pi}{3\varepsilon}\rceil$, we get
$$
\Pr(\theta_n\geq\tfrac\pi3) 
\geq
\Pr(\theta_n\geq n\varepsilon)
\geq 
\delta^n
=
\delta^{\lceil\tfrac{\pi}{3\varepsilon}\rceil}
>
0.
$$
This proves the first statement. The second statement follows from the Markovianity of the process.
\end{proof}

In order to generalize Cor.~\ref{coro:saddle-as} to the desired setting we will treat the effects of the Riemannian metric and the higher order terms of $f$ as a perturbation on the algorithm.
We start with the following general perturbation result.

\medskip
\begin{lemma} \label{lemma:perturbation}
Let $G$ be a linear vector field on $\mathbb R^n$ and let $\eta>0$ be given. Moreover let $P$ be {any other}
vector field on $\mathbb R^n$ satisfying $\|P({\mbf x})\|\leq 
C\|{\mbf x}\|^2$ for some $C>0$.
Then it holds that 
$$
\|{\mbf x}-\eta G{\mbf x} - ({\mbf y}-\eta (G+P){\mbf y})\|
\leq 
\|\id+\eta G\| \|{\mbf x}-{\mbf y}\| + C\eta\|{\mbf y}\|^2,
$$
where $\|\id+\eta G\|$ denotes the operator norm.
Let ${\mbf x}_{i+1}={\mbf x}_i-\eta G{\mbf x}_i$ and ${{\mbf y}_{i+1}}={\mbf y}_i-\eta (G+P){\mbf y}_i$.
Hence after $n$ steps, if the states remain in an $R$-ball about the origin, then
$$
\|{\mbf x}_{i+n}-{\mbf y}_{i+n}\|
\leq 
\|\id+\eta G\|^n \Big(\|{\mbf x}_{i}-{\mbf y}_{i}\| + \frac{CR^2}{\|\id+\eta G\|-1}\Big).
$$
\end{lemma}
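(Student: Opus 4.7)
The plan is to prove the single-step bound by direct expansion, then iterate it to obtain the $n$-step bound via a geometric sum. Both arguments are essentially mechanical once set up properly; the only subtle point is the final telescoping.

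For the first inequality, I would rewrite
\begin{equation*}
\mbf x - \eta G\mbf x - \bigl(\mbf y - \eta(G+P)\mbf y\bigr)
= (\mbf x - \mbf y) - \eta G(\mbf x - \mbf y) + \eta P(\mbf y)
= (\id - \eta G)(\mbf x - \mbf y) + \eta P(\mbf y),
\end{equation*}
exploiting the linearity of $G$. Applying the triangle inequality, the operator-norm bound, and the hypothesis $\|P(\mbf y)\| \leq C\|\mbf y\|^2$ gives
\begin{equation*}
\|\mbf x - \eta G\mbf x - (\mbf y - \eta(G+P)\mbf y)\|
\leq \|\id - \eta G\|\,\|\mbf x - \mbf y\| + C\eta\|\mbf y\|^2,
\end{equation*}
which matches the claimed bound (interpreting $\|\id + \eta G\|$ and $\|\id - \eta G\|$ as the relevant contraction/expansion factor dictated by the sign convention of the iteration; the argument is identical either way).

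For the $n$-step bound, I would argue by induction on $n$. Let $\alpha := \|\id + \eta G\|$ for brevity, and assume the iterates $\mbf x_j, \mbf y_j$ remain in the $R$-ball so that $\|\mbf y_j\|\leq R$ at every step. Applying the single-step bound repeatedly,
\begin{equation*}
\|\mbf x_{i+n} - \mbf y_{i+n}\|
\leq \alpha\,\|\mbf x_{i+n-1} - \mbf y_{i+n-1}\| + C\eta R^2,
\end{equation*}
and unrolling this recursion yields
\begin{equation*}
\|\mbf x_{i+n} - \mbf y_{i+n}\|
\leq \alpha^n \|\mbf x_i - \mbf y_i\| + C\eta R^2 \sum_{k=0}^{n-1}\alpha^k.
\end{equation*}
Assuming $\alpha > 1$ (which is the relevant case near a saddle, where $G$ has expansive directions), the geometric series is bounded by $(\alpha^n - 1)/(\alpha - 1) \leq \alpha^n/(\alpha - 1)$, so factoring out $\alpha^n$ gives
\begin{equation*}
\|\mbf x_{i+n} - \mbf y_{i+n}\|
\leq \alpha^n\Big(\|\mbf x_i - \mbf y_i\| + \frac{C\eta R^2}{\alpha - 1}\Big),
\end{equation*}
as claimed (absorbing or interpreting the $\eta$ factor as in the statement, which appears to carry a minor typo in the final displayed bound).

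The main obstacle is really bookkeeping rather than mathematical: one must be careful about the sign convention for $G$ (whether the iteration is $\mbf x \mapsto \mbf x - \eta G \mbf x$ or $\mbf x \mapsto \mbf x + \eta G \mbf x$), and one must justify the assumption that both trajectories stay inside the $R$-ball so that $\|P(\mbf y_j)\| \leq C R^2$ remains valid at every step. In the intended application to escaping a saddle (Lemma~\ref{lemma:perturbation} is used after Corollary~\ref{coro:saddle-as}), this containment is guaranteed by restricting attention to a small neighborhood of the critical point and using Lemma~\ref{lemma:descent} to control the iterate's size.
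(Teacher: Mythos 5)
Your proof is correct and follows essentially the same route as the paper's: expand $(\id - \eta G)(\mbf x - \mbf y) + \eta P(\mbf y)$, apply the triangle inequality with the operator norm and the quadratic bound on $P$, then unroll the resulting linear recurrence and bound the geometric sum by $\alpha^n/(\alpha-1)$. You also correctly flagged the two typos present in the statement (the sign on $\eta G$ inside the operator norm, and the stray $\|\mbf x_{i+n}-\mbf y_{i+n}\|$ that should read $\|\mbf x_i - \mbf y_i\|$, plus the dropped $\eta$), which the paper's own proof reproduces without comment.
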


\begin{proof}
The first inequality follows from the assumptions using the triangle inequality:
\begin{align*}
\|{\mbf x}-\eta G{\mbf x} - ({\mbf y}-\eta (G+P){\mbf y})\|
&=
\|(\id+\eta G)({\mbf x}-{\mbf y}) + \eta P({\mbf y})\|
\\&\leq
\|\id+\eta G\| \|{\mbf x}-{\mbf y}\| + C\eta\|{\mbf y}\|^2.
\end{align*}
Now using the assumption that $\|{\mbf y}\|\leq R$, and by iterating this result for $n$ steps, we find that
\begin{align*}
\|{\mbf x}_{i+n}-{\mbf y}_{i+n}\|
&\leq 
\|\id+\eta G\|^n\|{\mbf x}_{i}-{\mbf y}_{i}\| + (\|\id+\eta G\|^{n-1} + \cdots + 1)CR^2
\\&\leq 
\|\id+\eta G\|^n \Big(\|{\mbf x}_{i}-{\mbf y}_{i}\| + \frac{CR^2}{\|\id+\eta G\|-1}\Big).
\end{align*}
This concludes the proof.
\end{proof}

In our case, we can always choose coordinates in which the problem locally looks like the setting of Lem.~\ref{lemma:theta-proba} with a perturbation as in Lem.~\ref{lemma:perturbation}. 

\medskip
\begin{lemma}
\label{lemma:local-form}
Let $(M,\mathrm g)$ be a Riemannian manifold and let $f:M\to\mathbb R$ be a Morse--Bott function on $M$.
If $\mbf x_0\in M$ is a critical point of $f$, then there exists a chart $\sigma:U\to\mathbb R^n$ about $\mbf x_0$ such that, with $\tilde{\mbf x}=\sigma(\mbf x)=(x_1,\ldots,x_p,y_1,\ldots,y_q,z_1,\ldots,z_{N-p-q})$,
\begin{align*}
\sigma(\mbf x_0)&=0, \\
f\circ\sigma^{-1}(\tilde{\mbf x}) &= c + a_1x_1^2+\ldots+a_px_p^2-(b_1y_1^2+\ldots+b_qy_q^2) + \mathcal O(\|\tilde{\mbf x}\|^2), \\
\mathrm  g_{ij}(\tilde{\mbf x}) &= \delta_{ij} + \mathcal O(\|\tilde{\mbf x}\|),
\end{align*}
where $\delta_{ij}$ denotes the Kronecker symbol.
\end{lemma}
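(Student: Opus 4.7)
The natural strategy is to combine Riemannian normal coordinates at $\mbf x$ with a carefully chosen orthonormal basis of $\msf T_{\mbf x}M$ that diagonalizes the Hessian. The Morse--Bott hypothesis is exactly what makes these two requirements simultaneously achievable and produces the $(p,q,n-p-q)$ block structure appearing in the conclusion.

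Concretely, I would first decompose $\msf T_{\mbf x}M$ into the positive eigenspace, the negative eigenspace, and the kernel of $\operatorname{H}_f(\mbf x)$; by the Morse--Bott assumption the last space equals $\msf T_{\mbf x}C$, where $C$ is the critical submanifold through $\mbf x$. The spectral theorem applied to the symmetric bilinear form $\operatorname{H}_f(\mbf x)$ on the Euclidean space $(\msf T_{\mbf x}M, g(\mbf x))$ yields an orthonormal eigenbasis $(e_1,\ldots,e_n)$ respecting this decomposition, with eigenvalues $2a_1,\ldots,2a_p$, $-2b_1,\ldots,-2b_q$, $0,\ldots,0$. With this basis fixed, I define the chart by $\sigma^{-1}(x_1,\ldots,x_n):=\exp_{\mbf x}\bigl(\sum_{i=1}^n x_i e_i\bigr)$, which is a local diffeomorphism on some neighborhood of $\mbf x$ satisfying $\sigma(\mbf x)=0$.

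The metric estimate is then a standard property of Riemannian normal coordinates: orthonormality of $(e_i)$ gives $g_{ij}(0)=\delta_{ij}$, and the vanishing of the Christoffel symbols at the origin (see e.g.~\cite[Prop.~5.24]{Lee13}) in fact upgrades the claim to $g_{ij}=\delta_{ij}+\mathcal O(|x|^2)$. For the function itself, Taylor-expanding $\tilde f:=f\circ\sigma^{-1}$ at $0$ produces (after dropping the constant $f(\mbf x)$) a vanishing linear term because $\mbf x$ is critical, while the ordinary Hesse matrix of $\tilde f$ at $0$ agrees, via orthonormality of $(e_i)$ at the origin, with the matrix of the intrinsic Hessian in this basis. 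By construction this is $\operatorname{diag}(2a_1,\ldots,2a_p,-2b_1,\ldots,-2b_q,0,\ldots,0)$, which after halving yields the stated quadratic form with a remainder of order $\mathcal O(|x|^3)$ from Taylor's theorem; I read the $\mathcal O(x^2)$ in the statement as a slightly loose notation for this standard higher-order term.

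There is no real obstacle here: the only point to verify is that diagonalizing $\operatorname{H}_f(\mbf x)$ and normalizing the metric to $\delta_{ij}$ at the origin are compatible, which is exactly what the freedom to pick any orthonormal eigenbasis of the Hessian inside $\msf T_{\mbf x}M$ provides. In particular, no appeal to the deeper Morse--Bott Lemma (which would eliminate the higher-order terms at the cost of destroying the normal-metric property) is required for the claim as stated, and this is important because the subsequent application in Lemma~\ref{lemma:perturbation} really does need both the metric to be close to the identity and the function to have a diagonal quadratic part.
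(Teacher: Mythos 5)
Your proof is correct and follows essentially the same route as the paper's (one-sentence) argument: take Riemannian normal coordinates and compose with an orthogonal change of basis that diagonalizes the Hessian, the two steps being interchangeable since an orthogonal transformation of $\msf T_{\mbf x}M$ commutes with the exponential map. Your additional observations — that the remainder for $f$ is really $\mathcal O(|x|^3)$, that normal coordinates in fact give $g_{ij}=\delta_{ij}+\mathcal O(|x|^2)$, and that the full Morse--Bott Lemma would spoil the normal-metric property needed downstream — are all accurate refinements, not deviations.
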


\begin{proof}
Choose coordinates about $\mbf x_0$ such that the metric is Euclidean at $\tilde{\mbf x}_0$ (e.g.\ normal coordinates), then use an orthogonal transformation to diagonalize the Hessian of $f$ at $\tilde{\mbf x}_0$.
\end{proof}

\noindent Now we can combine the three previous results.

\medskip
\begin{corollary} \label{coro:local-escape}
Let $M$ be a Riemannian manifold and let $f$ be a Morse--Bott function on $M$.
For any strict saddle point ${\mbf x}\in M$ with value $f({\mbf x})=c$ there exists {$N>0$} and a neighborhood $U$ of ${\mbf x}$ such that 
$$
{\Pr(f({\mbf x}_{i+N}) < c \,|\, {\mbf x}_{i},\ldots,{\mbf x}_{i+N}\in U)}
>\delta.
$$
\end{corollary}

\begin{proof}
This follows immediately from Cor.~\ref{coro:saddle-as}, Lem.~\ref{lemma:perturbation} and Lem.~\ref{lemma:local-form}.
\end{proof}

\medskip
\begin{lemma}
\label{lemma:saddle-as-gen}
Let $M$ be a Riemannian manifold and let $f$ be a Morse--Bott function on $M$.
Further assume that $f:M\to\mathbb R$ has compact sublevel sets and that $\mbf x_0\in M$ is not a critical point of $f$.
Then the probability that $\mbf x_{i}$ converges to 
{the set of strict saddle points}
as $i\to\infty$ is zero.
\end{lemma}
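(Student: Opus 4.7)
The plan is to reduce the desired global almost-sure statement to the local passage estimate in Lemma~\ref{lemma:local-escape} and then amplify that single-visit passage probability to full probability via a Markov iteration. The crucial rigidifying ingredient is the monotonicity of $f$ along the iteration (Lemma~\ref{lemma:descent}): if an orbit converges to a critical submanifold $C$ with value $c$, then $f(\mbf x_i) \searrow c$ and in particular $f(\mbf x_i) \ge c$ for every $i$, so even a single event $f(\mbf x_{i+N}) < c$ irrevocably rules out convergence to $C$.

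I would first fix a connected component $C$ of the set of strict saddle critical points lying in the compact sublevel set $\{f \le f(\mbf x_0)\}$. Since $f$ is Morse--Bott with compact sublevel sets, only finitely many such components exist, so a union bound at the end reduces the problem to a single compact $C$. Applying Lemma~\ref{lemma:local-escape} at each $\mbf x \in C$ produces an open cover $\{U_{\mbf x}\}$ with associated escape windows $N_{\mbf x}$ and escape probabilities $\delta_{\mbf x} > 0$. Compactness of $C$ yields a finite subcover $U = U_1 \cup \cdots \cup U_s$, and setting $N_* := \max_\ell N_\ell$ and $\delta_* := \min_\ell \delta_\ell$ I claim the uniform bound
\[
\Pr\bigl(f(\mbf x_{i+N_*}) < c \bigm| \mathcal F_i\bigr) \ge \delta_* \quad \text{whenever } \mbf x_i \in U,
\]
which follows because if $f$ drops below $c$ at step $i+N_\ell \le i+N_*$ for some $\ell$ with $\mbf x_i \in U_\ell$, then by monotonicity it remains below $c$ at step $i+N_*$.

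Next, on the event $E = \{\mbf x_i \to C\}$ the iterates eventually enter and stay inside $U$, and $f(\mbf x_i) \ge c$ for all $i$. Therefore $E \subseteq \bigcup_{I \in \mathbb N} F_I$ where
\[
F_I := \{\mbf x_i \in U \text{ and } f(\mbf x_i) \ge c \text{ for all } i \ge I\},
\]
and it suffices to show $\Pr(F_I) = 0$ for every deterministic $I$. Conditioning successively on $\mathcal F_{I + m N_*}$ and using the uniform escape bound together with the Markov property of the iteration gives
\[
\Pr(F_I) \le \Pr\bigl(f(\mbf x_{I + m N_*}) \ge c \text{ for } m = 0, \ldots, M\bigr) \le (1 - \delta_*)^{M},
\]
which tends to $0$ as $M \to \infty$. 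A union bound over $I \in \mathbb N$ and over the finitely many strict saddle components finishes the proof.

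The main obstacle I expect is coordinating the local data of Lemma~\ref{lemma:local-escape}---whose constants $N_{\mbf x}, \delta_{\mbf x}$ depend on the base point---with the global Markov iteration. This is precisely where compactness of $C$ and monotonicity of $f$ must work together: compactness supplies uniform constants on a finite subcover, while monotonicity ensures that enlarging the escape window to a uniform $N_*$ does not weaken the lower bound on the escape probability.
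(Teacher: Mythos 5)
Your proof is correct and uses the same key ingredients as the paper (Lemma~\ref{lemma:local-escape}, compactness of the critical component, monotonicity of $f$, Markovianity), but the way you string them together is cleaner and genuinely different in its central step. The paper reduces to the statement that the trajectory has no accumulation point in $C\cap U$ for a \emph{single} chart-sized neighborhood $U$, and then builds a nested sequence of concentric shells $B_{R_{2k-1}}(\mbf z)\setminus B_{R_{2k}}(\mbf z)$, arguing that each shell traversal forces $N$ consecutive iterates inside $U$ and that there must be infinitely many such traversals; the $N$-step windows are then fed into the Markov argument. You avoid the shell construction entirely by letting $U$ be the union of the finite subcover and observing that convergence to the compact set $C$ forces the tail of the trajectory to lie permanently in $U$ (an open neighborhood of $C$), while monotonicity keeps $f(\mbf x_i)\ge c$; this immediately hands you an infinite sequence of windows $[I+mN_*, I+(m+1)N_*]$ starting inside $U$, and the conditional estimate $(1-\delta_*)^M$ follows by iterated conditioning on $\mathcal F_{I+mN_*}$. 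Two technical points you handle more carefully than the paper: (i) you explicitly uniformize the local escape data over the finite cover via $N_* = \max_\ell N_\ell$ and $\delta_* = \min_\ell \delta_\ell$, and (ii) you use monotonicity to justify that a successful escape in $N_\ell \le N_*$ steps still certifies $f(\mbf x_{i+N_*})<c$, so the enlarged window does not degrade the bound. The paper's shell approach is more robust in one respect: it would also rule out mere \emph{accumulation} in $C$ (rather than full convergence), but for the statement as given your direct route is shorter and dispenses with the step-size bound needed to make the shell-traversal timing rigorous.
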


\begin{proof}
{Let $C$ be the set of all strict saddle points $\mbf x$ with 
$f(\mbf x) \leq f(\mbf x_0)$. By assumption on $f$ the set $C$ is compact; actually, $C$ 
consists of finitely many connected compact submanifolds, cf.~\cite[Def.~2.41]{Nicolaescu11}.
We will show that the probability that $\mbf x_i$ converges to $C$ is zero. Note that by Lem.~\ref{lemma:descent} the function 
value is (surely) not increasing, and therefore we can focus on $C_c := C \cap f^{-1}$$(c)$ for some $c \leq f(\mbf x_0)$. By compactness of $C_c$ we can choose for each $\mbf z \in C_c$ balls $B_{r_{z}}(\mbf z)$, $B_{R_{z}}(\mbf z)$ around $\mbf z$ with $r_z < R_z$ such that both balls are  contained in a neighborhood $U(\mbf z)$ of $\mbf z$ as in Cor.~\ref{coro:local-escape}  and such that any realization $\mbf x_i$ needs at least $N$ steps to transverse
the spherical shells $B_{R_z}(\mbf z) \setminus B_{r_z}(\mbf z)$. Since $C_c$ is
compact we can find a finite covering by balls of the form $B_{r_z/2}(\mbf z)$, 
i.e.
$$ C_c \subset \bigcup_{k=1}^K B_{r_k/2}(\mbf z_k)\,,$$
Next define $r_{*} := \min_{k = 1, \dots K} r_k/2$. Then, by assumption,  there exists 
some $L \in \mathbb N$ such that $d(\mbf x_i,C_c) < r_*$ for all 
$i \geq L$. Hence we conclude $d(\mbf x_L, \mbf z) \leq r_*$ for some $\mbf z \in C_c$ 
and moreover $\mbf z \in B_{r_k/2}(\mbf z_k)$ for some $k \in \{1, \dots, K\}$. Thus,
the triangle inequality implies $\mbf x_L \in B_{r_k}(\mbf z_k)$ and therefore we conclude
$\mbf x_L, \mbf x_{L+1}, \dots, \mbf x_{L+N}$ is in $U(\mbf z_k)$. Since $\mbf x_{L+N}$
again satisfies $d(\mbf x_{L+N},C_c) < r_*$ we can repeat the above arguments and obtain
$\mbf x_{L+N}, \mbf x_{L+N+1}, \dots, \mbf x_{L+2N}$ are in some $U(\mbf z_{k'})$, etc.
Combining this observation  with Cor.~\ref{coro:local-escape} we get
\begin{equation*}
\Pr(\mbf x_i \to C_c \;\text{for}\; i \to \infty)
\leq \sum_{L = 1}^\infty {\Pr}_L = 0\,,
\end{equation*}
where $\Pr_L$ is defined by 
\begin{equation*}
{\Pr}_{L} :=
\Pr \left(
\begin{minipage}{75mm}
$ \forall i \geq L: \, f(\mbf x_i) \geq c \;\wedge\; \forall m  \in \mathbb N \, 
\exists k \in \{1, \dots K\}: \mbf x_{L+(m-1)N}, \dots, \mbf x_{L+mN} \in U(\mbf z_k)$
\end{minipage}
\right) = 0\,.
\end{equation*}
Above we used the fact that by Markovianity the behavior of each sequence of 
$N$ consecutive steps behaves independently. Since there are only finitely  many critical submanifolds contained in $C$ it follows that there are only finitely many $c \leq f(\mbf x_0)$ such that $C_c \neq \emptyset$ and thus we conclude that the probability that $\mbf x_i$ converges to $C$ is also zero. }
\end{proof}

\subsection{Almost Sure Convergence} \label{sec:almost-sure-convergence}

The results proven so far guarantee that the algorithm {(cp.\ Fig.~\ref{fig:intro1})} converges almost surely to the set of local minima:

\medskip
\begin{proposition} 
\label{prop:as-conv}
Let $M$ be a Riemannian manifold and let $f:M\to\mathbb R$ be an $\ell$-smooth Morse--Bott function with compact sublevel sets.
Assume that $\mbf x_0$ is not a critical point.
Then, for stepsize $\eta\leq1/\ell$, the randomized gradient descent algorithm converges almost surely to the set of local minima.
\end{proposition}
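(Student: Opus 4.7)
\medskip
The plan is to combine the monotone descent of cost values with the saddle-avoidance estimate of Lemma~\ref{lemma:saddle-as-gen}, using the Morse--Bott structure to force a clean dichotomy between strict saddles and local minima on the limit set of the iterates.

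First I would establish convergence of the cost values. Since $\eta\leq 1/\ell$, Lemma~\ref{lemma:descent} gives $f(\mbf x_{i+1})\leq f(\mbf x_i)$ for every $i$, so the entire trajectory stays in the compact sublevel set $\{f\leq f(\mbf x_0)\}$ and the sequence $\{f(\mbf x_i)\}$, being monotonically non-increasing and bounded below, converges to some $c\in\mathbb R$. Corollary~\ref{coro:conv-to-crit} then gives that $c$ is a critical value and that almost surely the set of accumulation points of $\{\mbf x_i\}$ is contained in $\mathrm{Crit}(f)\cap f^{-1}(c)$.

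Next I would use the Morse--Bott hypothesis to cut the critical set. On each connected component of $\mathrm{Crit}(f)$ the Hessian operator has constant signature with kernel equal to the tangent space of the component, so each component is either a local-minimum component (the transverse part is positive definite) or a strict-saddle component (the transverse part has a negative eigenvalue). Transverse non-degeneracy forces the critical components to be locally isolated, hence the compact set $\{f\leq f(\mbf x_0)\}$ contains only finitely many of them. For each strict-saddle component $C_j$ inside this sublevel set, Lemma~\ref{lemma:saddle-as-gen} yields that with probability one the sequence $\{\mbf x_i\}$ has no accumulation point in $C_j$. A finite union bound then upgrades this to: almost surely $\{\mbf x_i\}$ has no accumulation point in any strict-saddle component.

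Combining the two steps, almost surely the limit-point set of $\{\mbf x_i\}$ is a non-empty compact subset of $\mathrm{Crit}(f)\cap f^{-1}(c)$ that meets no strict-saddle component, and so lies entirely in the union of local-minimum components; this is the desired conclusion. The main subtlety I expect is that Lemma~\ref{lemma:saddle-as-gen} is phrased as ``$\mbf x_i$ does not converge to $C$'', whereas the argument above needs the stronger statement that the $\omega$-limit set of $\{\mbf x_i\}$ does not meet $C$. Rereading the proof (the spherical shell-crossing argument applied over a finite cover of $C$) shows it in fact delivers this stronger accumulation-point statement, so no new idea is needed, only a careful bookkeeping. A secondary technical point is the local finiteness of Morse--Bott critical components in a compact sublevel set, which follows at once from the transverse non-degeneracy of the Hessian.
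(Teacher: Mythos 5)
Your proof is correct and follows essentially the same route as the paper's: Lemma~\ref{lemma:descent} for monotone descent, Corollary~\ref{coro:conv-to-crit} to land on a critical value, and Lemma~\ref{lemma:saddle-as-gen} to exclude strict-saddle components. The two points you elaborate on — that the shell-crossing proof of Lemma~\ref{lemma:saddle-as-gen} actually rules out any accumulation point in a saddle component (not merely full convergence to it), and that the Morse--Bott normal form plus compactness of sublevel sets gives only finitely many critical components so a union bound closes the argument — are both correct and make explicit what the paper's terse proof leaves implicit.
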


\begin{proof}
Since by Lem.~\ref{lemma:descent} the function value cannot increase, and since by compactness of sublevel sets the function is lower bounded, the function value must (surely) converge to some value $f^\star$. 
By Cor.~\ref{coro:conv-to-crit}, $f^\star$ is almost surely a critical value. 
By Lem.~\ref{lemma:saddle-as-gen} the algorithm almost surely does not converge towards {the set of strict saddle points.  Hence it has converge almost 
surely to the set of local minima.}
\end{proof}

In order to prove convergence to a single local minimum, we first need the following technical lemma.

\medskip
\begin{lemma} \label{lemma:exponential-as}
Let $\mathcal E_i$ be probability distributions over the interval $[0,1]$ and let $E_i\sim\mathcal E_i$ for $i\in\mathbb N$. 
Let $\varepsilon>0$ be given. 
We assume that there is $q>0$ such that
$$\Pr(E_i\geq\varepsilon\,|\,E_1,\ldots,E_{i-1})\geq q$$ 
for all $i\in\mathbb N$.
Then for any $\alpha\in(0,q\ln\tfrac{1}{1-\varepsilon})$ it holds that almost surely
$$
\prod_{i=1}^n (1-E_i) \leq e^{-\alpha n}
$$
for $n$ large enough.
\end{lemma}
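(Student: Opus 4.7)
The plan is to reduce the problem to a sum of indicator variables and apply a martingale strong law of large numbers. Let $\mathcal F_i = \sigma(E_1,\ldots,E_i)$ denote the natural filtration, and define the binary random variables
$$
B_i := \mathbf{1}_{\{E_i \geq \varepsilon\}}.
$$
By hypothesis $\Pr(B_i = 1 \mid \mathcal F_{i-1}) \geq q$ almost surely. The key elementary observation is that since $E_i \in [0,1]$, we have $1 - E_i \leq 1$ always and $1 - E_i \leq 1 - \varepsilon$ whenever $B_i = 1$, which in either case gives
$$
1 - E_i \;\leq\; (1-\varepsilon)^{B_i}.
$$
Taking the product and then the logarithm yields
$$
\prod_{i=1}^n (1-E_i) \;\leq\; (1-\varepsilon)^{S_n}, \qquad S_n := \sum_{i=1}^n B_i,
$$
so it suffices to show that $S_n/n$ stays above $q' := \alpha / \ln\tfrac{1}{1-\varepsilon}$ for all large $n$ almost surely; note $q' < q$ by choice of $\alpha$.

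Next I would deduce the almost sure lower bound on $S_n/n$ from a bounded-difference martingale argument. Let $p_i := \Pr(B_i = 1 \mid \mathcal F_{i-1}) \geq q$ and set $D_i := B_i - p_i$. Then $(D_i)$ is a martingale difference sequence with $|D_i| \leq 1$, so Azuma--Hoeffding gives
$$
\Pr\Bigl(\bigl|\textstyle\sum_{i=1}^n D_i\bigr| \geq \delta n\Bigr) \;\leq\; 2\exp(-\delta^2 n / 2)
$$
for every $\delta > 0$. Summability in $n$ and the Borel--Cantelli lemma then imply $\tfrac{1}{n}\sum_{i=1}^n D_i \to 0$ almost surely, and therefore
$$
\liminf_{n\to\infty} \frac{S_n}{n} \;=\; \liminf_{n\to\infty} \Bigl( \frac{1}{n}\sum_{i=1}^n p_i + \frac{1}{n}\sum_{i=1}^n D_i \Bigr) \;\geq\; q
$$
almost surely.

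Finally I would combine the two ingredients. Since $q' < q$, there is an almost sure event on which $S_n > q' n$ for all sufficiently large $n$; on this event
$$
\prod_{i=1}^n (1-E_i) \;\leq\; (1-\varepsilon)^{S_n} \;=\; \exp\Bigl(-S_n \ln\tfrac{1}{1-\varepsilon}\Bigr) \;\leq\; \exp(-q' n \ln\tfrac{1}{1-\varepsilon}) \;=\; e^{-\alpha n}
$$
for all large $n$, which is the claim.

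The main subtlety is that the $E_i$ are not independent and their conditional laws are only bounded below, not specified, so classical i.i.d.\ strong laws do not apply directly; the cleanest fix is the martingale SLLN via Azuma together with the pointwise bound $p_i \geq q$. Everything else is deterministic manipulation of the inequality $1-E_i \leq (1-\varepsilon)^{B_i}$.
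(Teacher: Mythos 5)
Your proof is correct, and it takes a genuinely different route from the paper's. The paper reduces the claim to the classical strong law of large numbers for i.i.d.\ Bernoulli($q$) coin tosses, arguing that the events $\{E_i \geq \varepsilon\}$ dominate such a sequence; the crucial step is the probability bound $\Pr(E_i \geq \varepsilon,\ \forall i \in I) \geq q^{|I|}$ for every index set $I$, followed by a coupling (only sketched) to transfer the SLLN conclusion. You instead work intrinsically with the indicators $B_i$, observe the clean deterministic bound $1-E_i \leq (1-\varepsilon)^{B_i}$, and establish $\liminf S_n/n \geq q$ via a martingale SLLN built from Azuma--Hoeffding and Borel--Cantelli. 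Your approach is arguably more rigorous as written: the paper's jump from the intersection bound to ``a.s.\ at least $pn$ of the $E_i$ exceed $\varepsilon$'' needs an explicit stochastic-dominance coupling (a bound on intersection probabilities alone is not enough; the full conditional hypothesis is needed), whereas your martingale argument uses the conditional bound $\Pr(B_i=1\mid\mathcal F_{i-1})\geq q$ directly and is self-contained. The paper's route is conceptually lighter when the coupling is taken for granted, while yours trades the coupling step for a concentration inequality and also yields an explicit exponential tail bound on the deviation of $S_n/n$, which could be useful for quantitative refinements.
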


\begin{proof}
First recall the following basic fact. 
Consider a sequence of i.i.d. biased coin tosses $X_i\in\{0,1\}$ with $\Pr(X_i=1)=q$.
By the strong law of large numbers~\cite[Thm.~20.1]{Jacod04} the average $\overline X_n=\tfrac1n\sum_{i=1}^n X_i$ converges almost surely to the expectation $q$. 
Hence for any $p\in (0,q)$, there almost surely exists some $n_0$ large enough such that $\overline X_n\geq p$ for all $n>n_0$.

Let any value $n$, and index set $I=\{i_1,\ldots,i_k\}\subseteq\{1,\ldots,n\}$ (ordered increasingly) be given.
Then, using the law of total probability,
\begin{align*}
\Pr(E_i\geq\varepsilon, \,\,\forall i\in I)
&=
\Pr(E_{i_k}\geq\varepsilon\wedge\ldots\wedge E_{i_1}\geq\varepsilon)
\\&=
\Pr(E_{i_k}\geq\varepsilon \,|\,
E_{i_{k-1}}\geq\varepsilon\wedge\ldots\wedge E_{i_1}\geq\varepsilon) 
\ldots 
\Pr(E_{i_1}\geq\varepsilon)
\\&\geq 
q^k
\\&=
\Pr(X_{i_k}=1,\ldots X_{i_1}=1)
\end{align*}
and thus we have the following:
For all $p\in (0,q)$, there almost surely exists some $n_0$ large enough such that for all $n>n_0$ it holds that at least $pn$ of the $E_i$ are greater than or equal to $\varepsilon$.
In this case
$$
\prod_{i=1}^n (1-E_i) 
\leq 
(1-\varepsilon)^{np}
=e^{-\alpha n},
$$
with $\alpha=p\ln\tfrac{1}{1-\varepsilon}$.
\end{proof}

Finally this allows us to prove the main result:

\medskip
\begin{theorem} 
\label{thm:almost-sure-convergence}
Let $M$ be a Riemannian manifold and let $f:M\to\mathbb R$ be an $\ell$-smooth Morse--Bott function with compact sublevel sets.
Further assume that ${\mbf x}_0$ is not a critical point of $f$.
Then, for step size $\eta\leq1/\ell$, the randomly projected gradient descent algorithm converges almost surely to a local minimum. 
\end{theorem}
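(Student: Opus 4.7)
The plan is to upgrade Proposition~\ref{prop:as-conv}, which already gives almost-sure convergence to the \emph{set} of local minima, to convergence to a single local minimum. Fix a realization for which the function values $f(\mbf x_i) \searrow f^{\star}$ and every accumulation point of $(\mbf x_i)$ is a local minimum; this event has probability one by Proposition~\ref{prop:as-conv}. Let $\mbf p$ be such an accumulation point; the goal is to show that $\mbf x_i \to \mbf p$ along the full sequence and not merely along a subsequence.

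The first step is to derive a local Polyak--\L{}ojasiewicz inequality around $\mbf p$. In the chart of Lemma~\ref{lemma:local-form}, split the coordinates as $(\mbf x, \mbf z)$ with $\mbf x$ normal and $\mbf z$ tangent to the critical submanifold through $\mbf p$. Since $\mbf p$ is a local minimum, the Hessian has no negative eigenvalues, so locally $f - f^{\star} = \sum_{i=1}^{p} a_i x_i^{2} + \text{h.o.t.}$ with $a_i > 0$. A direct computation in these coordinates yields a neighborhood $U$ of $\mbf p$ and constants $\mu, L > 0$ such that, for all $\mbf y \in U$,
$$
2\mu\bigl(f(\mbf y) - f^{\star}\bigr) \;\leq\; \|\grad f(\mbf y)\|^{2} \;\leq\; L\bigl(f(\mbf y) - f^{\star}\bigr).
$$
Combined with Lemma~\ref{lemma:descent}, writing $\Delta_i := f(\mbf x_i) - f^{\star}$ and $c := \eta(1 - \ell\eta/2)>0$, for iterates in $U$ one obtains
$$
\Delta_{i+1} \;\leq\; \Delta_i - c\,\langle \mbf u_i, \grad f(\mbf x_i)\rangle^{2} \;\leq\; \bigl(1 - 2c\mu\,\lambda_i\bigr)\Delta_i,
$$
where $\lambda_i = \langle \mbf u_i, \grad f(\mbf x_i)/\|\grad f(\mbf x_i)\|\rangle^{2} \in [0,1]$.

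For the Haar measure, Lemma~\ref{lemma:beta-distr} shows that $\lambda_i$ follows a state-independent Beta distribution, so there exist $\varepsilon, q > 0$ with $\Pr(\lambda_i \geq \varepsilon \mid \text{past}) \geq q$; the discrete case gives the analogous bound via Lemma~\ref{lemma:overlap} using the spanning assumption. Applying Lemma~\ref{lemma:exponential-as} with $E_i := (2c\mu\lambda_i)\wedge 1$ then yields $\Delta_i \leq e^{-\alpha i}$ eventually, almost surely. The upper PL bound delivers $\|\grad f(\mbf x_i)\| = O(e^{-\alpha i/2})$, so each one-step displacement is bounded by $\eta\|\grad f(\mbf x_i)\| = O(e^{-\alpha i/2})$, and summing this geometric series shows that $(\mbf x_i)$ is Cauchy in the Riemannian distance and therefore converges --- necessarily to $\mbf p$.

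The main obstacle will be ensuring that the iterate actually remains in $U$ long enough for the local estimates to apply, since $\mbf p$ is a priori only an accumulation point and the iterate could leave $U$ before the exponential regime kicks in. I would resolve this by a trapping argument: first shrink $U$ to a ball of radius $r$ small enough that $\eta\sqrt{L}\sum_{i\geq 0} e^{-\alpha i/2} < r/2$; then, using the accumulation property together with $\Delta_i \to 0$, pick $i_0$ at which $\mbf x_{i_0} \in B_{r/2}(\mbf p)$ and $\Delta_{i_0}$ is already in the exponential-decay regime. An induction on $i \geq i_0$ shows that the cumulative displacement from $\mbf x_{i_0}$ stays below $r/2$, so the iterates never leave $U$ and the bounds above close up consistently.
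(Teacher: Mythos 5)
Your proposal follows essentially the same route as the paper's proof: a local Polyak--\L{}ojasiewicz inequality derived from the Morse--Bott normal form (Lemma~\ref{lemma:local-form}), Lemma~\ref{lemma:exponential-as} to get almost-sure eventual exponential decay of $f(\mbf x_i)-f^\star$, and summation of the geometrically decaying step lengths to conclude the trajectory has finite total length and is therefore Cauchy. The only minor differences are that the paper bounds each step directly via $\|\mbf x_{i+1}-\mbf x_i\|\leq\sqrt{|f(\mbf x_{i+1})-f(\mbf x_i)|/(1/\eta-\ell/2)}$ from Lemma~\ref{lemma:descent} (so it never needs your upper estimate $\|\grad f\|^2\leq L(f-f^\star)$), and that it takes a uniform PL constant $a_{\min}$ over the whole connected limit component $C$ rather than localizing at a single accumulation point with an explicit trapping argument --- the issue your trapping paragraph addresses is in fact handled rather tacitly in the paper.
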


\begin{proof}
The idea {of} the proof is as follows: first we show that almost surely the distance to the set of local minima decreases exponentially. {Then this} implies that the size of each step decreases exponentially as well, and hence the algorithm converges absolutely to a local minimum.

Prop.~\ref{prop:as-conv} shows that ${\mbf x}_i$ converges almost surely to some set of local minima denoted $C$.
Let ${\mbf z}\in C$ be such a local minimum, and let $H({\mbf z})$ denote the Hessian at ${\mbf z}$, and let $a_{\min}({\mbf z})$ denote the smallest non-zero eigenvalue of $H({\mbf z})$.
{Assume w.l.o.g.~that $f(\mbf z)=0$.}
By compactness and continuity $a_{\min}({\mbf z})$ has a minimum on $C$, which we denote $a_{\min}$.
{In a sufficiently small neighborhood of ${\mbf z}$, working in a chart as provided by Lem.~\ref{lemma:local-form}, we obtain}
\begin{align*}
\|\grad f({\mbf x})\|^2
&=
\sum_j 4 a_j(\mbf z)^2 (x_j-z_j)^2 + \mathcal O(\|{\mbf x}-\mbf z\|^3)
\\&\geq
4 a_{\min}(\mbf z) f({\mbf x}) + \mathcal O(\|{\mbf x}-\mbf z\|^3)
\\&\geq
2 a_{\min}(\mbf z) f({\mbf x}).
\end{align*}
Together with Lem.~\ref{lemma:descent} and Cor.~\ref{coro:g-props}
we find
$$
f({\mbf x}_{i+1}) \leq \Big(1-((\mbf u_i)_N)^2 \delta 2a_{\min}\Big)f({\mbf x}_{i})
$$
{with $\delta=\eta(1-\tfrac{\ell\eta}{2}) < \tfrac{1}{2 a_{\min}}$.
By Lem.~\ref{lemma:exponential-as}, applied to
$E_i = ((\mbf u_i)_N)^2 \delta a_{\min}$}, this shows that there is some
$\alpha>0$ such that $f({\mbf x}_{i})\leq f({\mbf x}_{0}) e^{-\alpha i}$ 
almost surely for $i$ large enough. Again by Lem.~\ref{lemma:descent} we have that 
$$
\|{\mbf x}_{i+1}-{\mbf x}_{i}\|
\leq 
\sqrt{\frac{|f({\mbf x}_{i+1})-f({\mbf x}_{i})|}{\tfrac{1}{\eta}-\tfrac{\ell}{2}}}.
$$
This shows that {for $L \in \mathbb N$ large enough we can assume
that all $\mbf x_i$ with $i \geq L$ are contained in a single chart of 
the same kind as in Lem.~\ref{lemma:descent} and thus we conclude}
$$
\sum_{i=L}^\infty \|{\mbf x}_{i+1}-{\mbf x}_{i}\| 
\leq 
\frac{1}{\sqrt{\tfrac{1}{\eta}-\tfrac{\ell}{2}}}
\sum_{i=L}^\infty \sqrt{f({\mbf x}_{i})}
\leq
\sqrt{\frac{f({\mbf x}_{0})}{\tfrac{1}{\eta}-\tfrac{\ell}{2}}}
\frac{e^{L \alpha/2}}{1-e^{\alpha/2}}
< \infty,
$$
and so the total length of the trajectory is finite.
Thus ${\mbf x}_{i}$ almost surely {converges} to a local minimum of $f$.
\end{proof}

\section{Quantitative Results in Low Dimensions} \label{sec:low-dims}

The goal of this section is to study how long it takes to pass a saddle point.
More precisely, we want to understand the hitting time $\tau$ of the sublevel set of the critical value at a saddle point. 
By construction, the discrete stochastic process ${\mbf x}_i$ is a Markov chain, since the step ${\mbf x}_{i+1}-{\mbf x}_i$ depends only on the value of ${\mbf x}_i$, and not on previous steps.
As such, for small enough step size $\eta$, we can approximate the algorithm by an appropriate Ito stochastic differential equation (SDE). As we will see, the accuracy of this approximation improves with smaller step size $\eta$, at the cost of slower convergence.

\subsection{Two-Dimensional Euclidean Case} \label{sec:numerics}

In the simplest case, we consider the prototypical Morse function $f(x,y)=x^2-y^2$ on $\mathbb R^2$ in the Euclidean metric.
Due to the scale invariance, we can normalize the state after each iteration and so we obtain a discrete-time stochastic process on the unit circle parametrized by the angle $\phi$.
For small enough step size $0<\eta\ll1$ the iteration rule is approximately%
\footnote{The formula is obtained by locally approximating the unit circle by its tangent line. More precisely, we define $\Delta\phi=-\eta\langle\mbf u,\nabla f\rangle\langle\mbf u,\mbf v\rangle\mbf v$ where $\mbf v=(-\sin\phi,\cos\phi)$. Then $\Delta\phi=\eta\sin(2\phi)-\eta2u_1u_2\overset{d}{=}\eta\sin(2\phi)+\eta u_2$.} 
given by
\begin{align} \label{eq:sde-discrete}
\Delta\phi_i=\phi_{i+1}-\phi_i 
\overset{d}=
\eta\,(\sin(2\phi_i)+u_2) \,,
\end{align}
where $\mbf u=(u_1,u_2)$ is chosen uniformly at random from the unit circle and hence using Lem.~\ref{lemma:beta-distr} we find that
$$
\mathbb E[\Delta\phi_i]
=
\eta\,\sin(2\phi_i),
\quad 
\operatorname{Var}(\Delta\phi_i)
=
\frac\eta2\,.
$$
Using~\cite[Sec.~6.2]{Kloeden92} we can approximate this process using the stochastic differential equation (SDE)
\begin{align} \label{eq:sde-cont}
d\Phi_t = \sin(2\Phi_t) dt
+
\frac{\sqrt\eta}2 dW_t\,,
\end{align}
where $W_t$ denotes a standard Brownian motion.
The approximation is visualized in Fig.~\ref{fig:sde-approx}.

\begin{figure}[htb]
\centering
\includegraphics[width=0.45\textwidth]{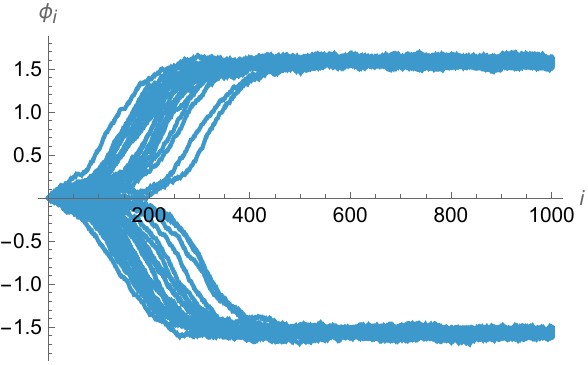}
\quad
\includegraphics[width=0.45\textwidth]{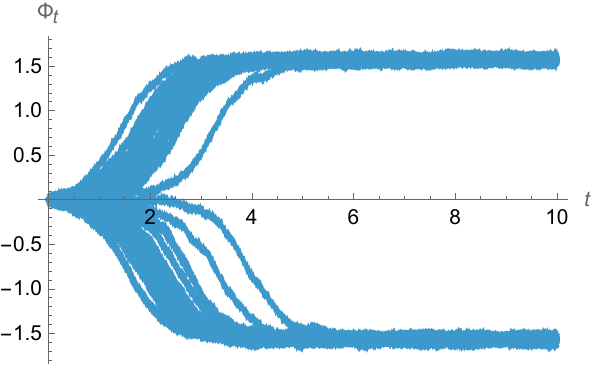}
\caption{We use the step size $\eta=0.01$, total time $T=10$, and initial state $\phi_0=\Phi_0=0$.
Left: $50$ realizations of the discrete-time stochastic process~\eqref{eq:sde-discrete} for $T/\eta=1000$ steps.
Right: $50$ realizations of the continuous-time stochastic process~\eqref{eq:sde-cont} for time $T=10$, simulated using the Euler--Maruyama scheme~\cite[Sec.~9.1]{Kloeden92} with time step size $\Delta t=0.001$.}
\label{fig:sde-approx}
\end{figure}

In order to understand how long it takes the algorithm to pass the saddle point, we are interested in computing the hitting time 
$$
\tau = \inf\{t\geq0 : \Phi_t=\pm\tfrac\pi4\},
$$
where we always assume that $\Phi_0=0$.
Close to $0$, the SDE can be linearized as $d\Phi_t=2\Phi_t dt + \tfrac{\sqrt\eta}2 dW_t$, which is a mean-repelling Ornstein--Uhlenbeck process. 
Away from $0$, the deterministic part dominates and we can solve the (deterministic) ODE $\dot\Phi_t = \sin(2\Phi_t)$.

First we approximate the hitting time distribution of the mean-repelling Ornstein--Uhlenbeck process. More precisely, we find a lower bound on the cumulative distribution function (c.d.f.) of $\tau_c$, the hitting time of $\pm c$.

\medskip
\begin{lemma} \label{lemma:approx-ornstein}
Let $X_t$ be the solution of the SDE $dX_t=\kappa X_t dt + \sigma dW_t$ with $\kappa,\sigma>0$ and $X_0=0$. 
Setting $\tilde\sigma(t)=\sigma\sqrt{\frac{e^{2\kappa t}-1}{2\kappa}}$, it holds that $X_t\sim\mathcal N(0,\tilde\sigma(t)^2)$. 
If we denote by $\tau_c$ the hitting time of $\pm c$ (where $c>0$), we find the lower bound $\Pr[\tau_c\leq t]\geq\Pr[|X_t|\geq c]=1+\operatorname{erf}\big(\frac{-c}{\tilde\sigma(t)\sqrt2}\big)$ where $\operatorname{erf}(\cdot)$ denotes the error function.
\end{lemma}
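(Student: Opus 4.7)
The plan is to solve the linear SDE explicitly by an integrating factor, identify the distribution of $X_t$ as a centered Gaussian with the stated variance, and then deduce the hitting-time lower bound from pathwise continuity.

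First I would apply It\^o's formula to $Y_t = e^{-\kappa t} X_t$. Because the drift of $X_t$ is exactly $\kappa X_t$, the drift cancels and we obtain $dY_t = \sigma e^{-\kappa t} dW_t$. Integrating from $0$ to $t$ and using $X_0 = 0$ yields the closed form
\begin{equation*}
X_t = \sigma \int_0^t e^{\kappa(t-s)} \, dW_s.
\end{equation*}
Since $X_t$ is a Wiener integral of a deterministic function, it is a centered Gaussian random variable.

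Next I would compute its variance via the It\^o isometry:
\begin{equation*}
\mathbb{E}[X_t^2] = \sigma^2 \int_0^t e^{2\kappa(t-s)} \, ds = \sigma^2 \, \frac{e^{2\kappa t}-1}{2\kappa} = \tilde\sigma(t)^2,
\end{equation*}
which establishes $X_t \sim \mathcal N(0, \tilde\sigma(t)^2)$.

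For the hitting-time bound, observe that $X_\cdot$ has continuous sample paths (it is a Gaussian process with continuous covariance). On any sample path with $|X_t| \geq c$, since $X_0 = 0 < c$, the intermediate value theorem gives some $s \in [0,t]$ with $|X_s| = c$, so $\tau_c \leq t$. Hence $\{|X_t| \geq c\} \subseteq \{\tau_c \leq t\}$ and monotonicity of probability yields $\Pr[\tau_c \leq t] \geq \Pr[|X_t| \geq c]$. Finally, using the symmetry of $X_t$ and the relation $\Phi(x) = \tfrac12\bigl(1+\operatorname{erf}(x/\sqrt 2)\bigr)$ for the standard normal CDF, one computes $\Pr[|X_t| \geq c] = 2\bigl(1 - \Phi(c/\tilde\sigma(t))\bigr) = 1 - \operatorname{erf}\bigl(c/(\tilde\sigma(t)\sqrt 2)\bigr) = 1 + \operatorname{erf}\bigl(-c/(\tilde\sigma(t)\sqrt 2)\bigr)$, as claimed.

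The argument is essentially a standard exercise in linear SDEs, so there is no real obstacle; the only mildly subtle point is the pathwise continuity step used to pass from the event on the marginal $X_t$ to the event on the hitting time, and the bookkeeping between $\Phi$ and $\operatorname{erf}$ at the end.
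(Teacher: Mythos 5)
Your proof is correct and follows essentially the same route as the paper's: solve the linear SDE by an integrating factor to get a Wiener integral, read off the Gaussian law and variance, and deduce the hitting-time bound from pathwise continuity; you merely spell out the intermediate-value and $\operatorname{erf}$ bookkeeping steps that the paper leaves implicit. Incidentally, your exponent $e^{\kappa(t-s)}$ is the correct one (the paper's $e^{\kappa(s-t)}$ is a sign slip, as only $e^{\kappa(t-s)}$ reproduces the stated variance $\tilde\sigma(t)^2$).
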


\begin{proof}
The SDE is linear and hence has the well-known solution
$$
X_t 
= 
\sigma \int_0^t e^{\kappa(s-t)}dW_s 
\sim 
\mathcal N(0,\tilde\sigma(t)^2),
$$
where $\tilde\sigma(t)=\sigma\sqrt{\frac{e^{2\kappa t}-1}{2\kappa}}$, see~\cite[Sec.~4.2 \&~4.4]{Kloeden92}.
It is clear that if $|X_t|\geq c$ then $\tau_c\leq t$, which implies the last statement.
\end{proof}

The approximation is very accurate, as illustrated in Fig.~\ref{fig:approx-ornstein}. 
The following remark gives some details on the quality of the approximation.

\begin{figure}[htb]
\centering
\includegraphics[width=0.45\textwidth]{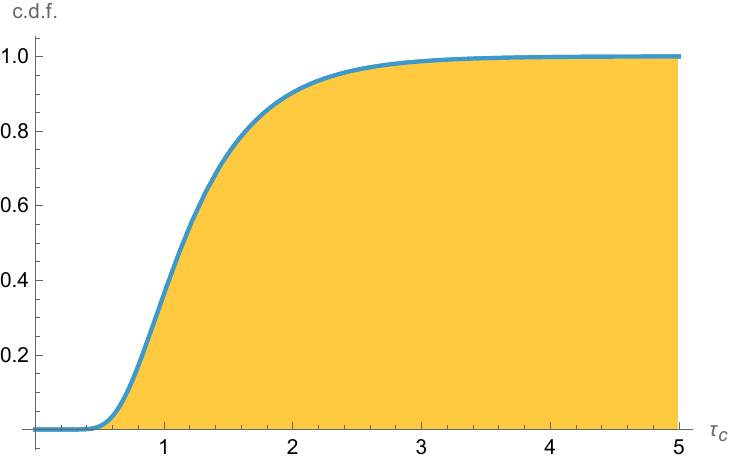}
\quad
\includegraphics[width=0.45\textwidth]{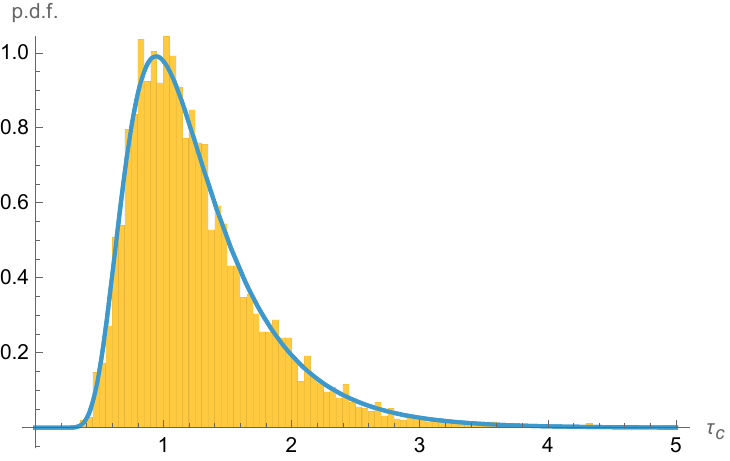}
\caption{Lower bound for the hitting time $\tau_c$ of the mean-repelling Ornstein--Uhlenbeck process, as obtained in Lem.~\ref{lemma:approx-ornstein}.
We used the constants $\kappa=2$, $\sigma=3$ and $c=10$. 
Shown are the histograms of the cumulative distribution function (c.d.f.) and probability density function (p.d.f.) of $\tau_c$ for the SDE $dX_t=\kappa X_t dt + \sigma dW_t$ computed using time steps of size $0.001$ (orange), and the analytical lower bound for the c.d.f.\ and the resulting p.d.f.\ obtained in Lem.~\ref{lemma:approx-ornstein} (blue).}
\label{fig:approx-ornstein}
\end{figure}

\medskip
\begin{remark}
As in the proof of Lem.~\ref{lemma:approx-ornstein} the solution of the SDE with initial condition $X_0=c$ is given by
$$
X_t = e^{\kappa t}c + \sigma \int_0^t e^{\kappa(s-t)}dW_s. 
$$
Then 
$$
e^{\kappa t}c - n\tilde\sigma(t) \geq c
\implies
c \geq n\sigma \sqrt{\frac{e^{2\kappa t}-1}{2\kappa}}\frac{1}{e^{\kappa t}-1}
\geq \frac{n\sigma}{\sqrt{2\kappa}}\max\Big(1,\sqrt{\tfrac2{\kappa t}}\Big).
$$
Hence, for the error in $\tau_c$ to be small compared to some given $t>0$, we need $c\gg\frac{\sigma}{\sqrt\kappa}$ if $\kappa t$ is big, and $c\gg \frac\sigma{\kappa\sqrt{t}}$ if $\kappa t$ is small.
\end{remark}

\smallskip
Now if we choose $c$ small enough that the linearization of $\sin(\cdot)$ is accurate, but large enough that the approximation of Lem.~\ref{lemma:approx-ornstein} is good (which is always possible if $\eta$ is small enough), then we can approximate $\tau$ as follows. 

We fix some value $c$ and note that the ODE $\dot\phi(t)=\sin(2\phi(t))$ with initial condition $\phi(0)=c$ has the solution $\phi(t)=\arctan(e^{2t}\tan(c))$, and hence the hitting time of $\pi/4$ is $\tilde\tau_c=-\tfrac12\ln(\tan(c))$. Hence we have approximately $\tau\simeq\tau_c+\tilde\tau_c$. This is illustrated in Fig.~\ref{fig:hitting-times} for realistic values.

\begin{figure}[htb]
\centering
\includegraphics[width=0.5\textwidth]{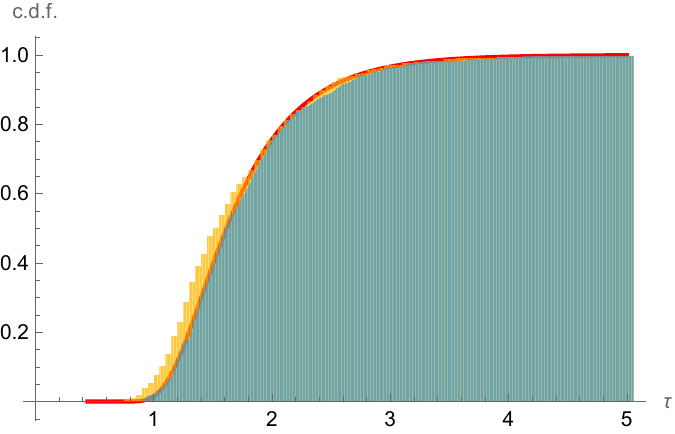}
\caption{Using $\eta=0.05$ we plot the c.d.f. of the hitting time $\tau$ of $\tfrac\pi4$. 
In orange for the difference equation~\eqref{eq:sde-discrete}, in blue for the stochastic differential equation~\eqref{eq:sde-cont} (with time step $\Delta t=0.001$), and in red for the analytic approximation as described above.}
\label{fig:hitting-times}
\end{figure}

\section{{Major Application:\\ Ground-State Optimization in Quantum Physics}} \label{sec:gs-opt}

Among the Riemannian optimization
techniques adopted in quantum information science,
hybrid quantum-classical algorithms \cite{bharti2022noisy} such as variational quantum algorithms (VQAs) \cite{cerezo2021variational, tilly2022variational}, have been developed to solve ground-state (smallest eigenvalue) problems \cite{peruzzo2014variational}.

From the perspective of quantum information and computation,
in VQAs, a fixed parameterized quantum circuit, i.e.~a parameterized set of unitary transformations (usually implemented by a fixed set of unitary gates), is iteratively optimized in tandem with a classical computer to minimize a cost function whose global optima encode the solution(s) to the desired problem. 
More precisely, a quantum computer, including a measurement device, is used to output the current (e.g., expectation) value of the cost function in each iterative step. 
Interleaved, a classical computer component provides the optimization routine that determines the parameter update for the next iteration on the quantum computer. 
Since quantum computers can, in principle, handle some classically exponentially complex problems, the hope is that VQAs will be able to solve problems of large size that a solely classical algorithm would be incapable of. 

However, since the optimization problems VQAs aim to solve are highly non-linear, which can often be traced back to the quantum circuit parametrization \cite{magann2021pulses, lee2021progress}, the iterative search for the optimal parameters can get stuck in suboptimal solutions \cite{bittel2021training}. 
To overcome this challenge, adaptive quantum algorithms have been designed.  
Instead of fixing a quantum circuit and picking a parametrization as in VQAs, adaptive algorithms successively grow a quantum circuit based on measurement data from the quantum computer. 
In this approach, the cost function is minimized \textit{while} the quantum circuit is grown, see \cite{grimsley2019adaptive,wiersema2023optimizing, magann2022feedback, magann2023randomized}. 
One of the most prominent examples of such a strategy is the so-called {\sc adapt-vqe} algorithm \cite{grimsley2019adaptive}, which was originally proposed to solve ground state problems in chemistry, and was further developed later to solve combinatorial optimization problems on quantum computers \cite{tang2021qubit}. 

Unfortunately, since typically in adaptive quantum algorithms the circuit growth is informed by gradient estimates performed by the quantum computer, adaptive quantum algorithms face similar challenges as VQAs, i.e.\ the adaptive circuit growth can get stuck when gradients (asymptotically) vanish.
This issue has recently been addressed by identifying adaptive quantum algorithms as quantum-computer implementations of Riemannian gradient flows on the special unitary group $\text{SU}(d)$ of dimension $d=2^{n}$ where $n$ is the number of qubits of the quantum computer \cite{wiersema2023optimizing}. 
Since implementing Riemannian gradient methods on quantum computers requires in general exponential resources (i.e.\ the number of gates or the number of iterations of the corresponding quantum circuit grows exponentially in $n$), \cite{wiersema2023optimizing} proposed projecting the Riemannian gradient into smaller dimensional subspaces that scale polynomially in $n$, which in turn yields scalable quantum-computer implementations. 
As already mentioned,  ``dimension reduction'' for efficient quantum-computer implementations comes at the cost of convergence not only to local minima of the cost function but also to
artificial minima induced by projections of the full Riemannian gradient into fixed subspaces.
These artifacts can be avoided by ``random projections'' of the Riemannian gradient \cite{magann2023randomized}.
The random projections can be implemented on quantum computers as depicted in Figs.~\ref{fig:intro1} and~\ref{fig:intro2}. 
More precisely, an efficient quantum-computer implementation of these random 
directions can be achieved through exact \emph{and} approximate unitary 2-designs \cite{dankert2009exact}.

\begin{figure}[htb]
\centering
\includegraphics[width=0.85\textwidth]{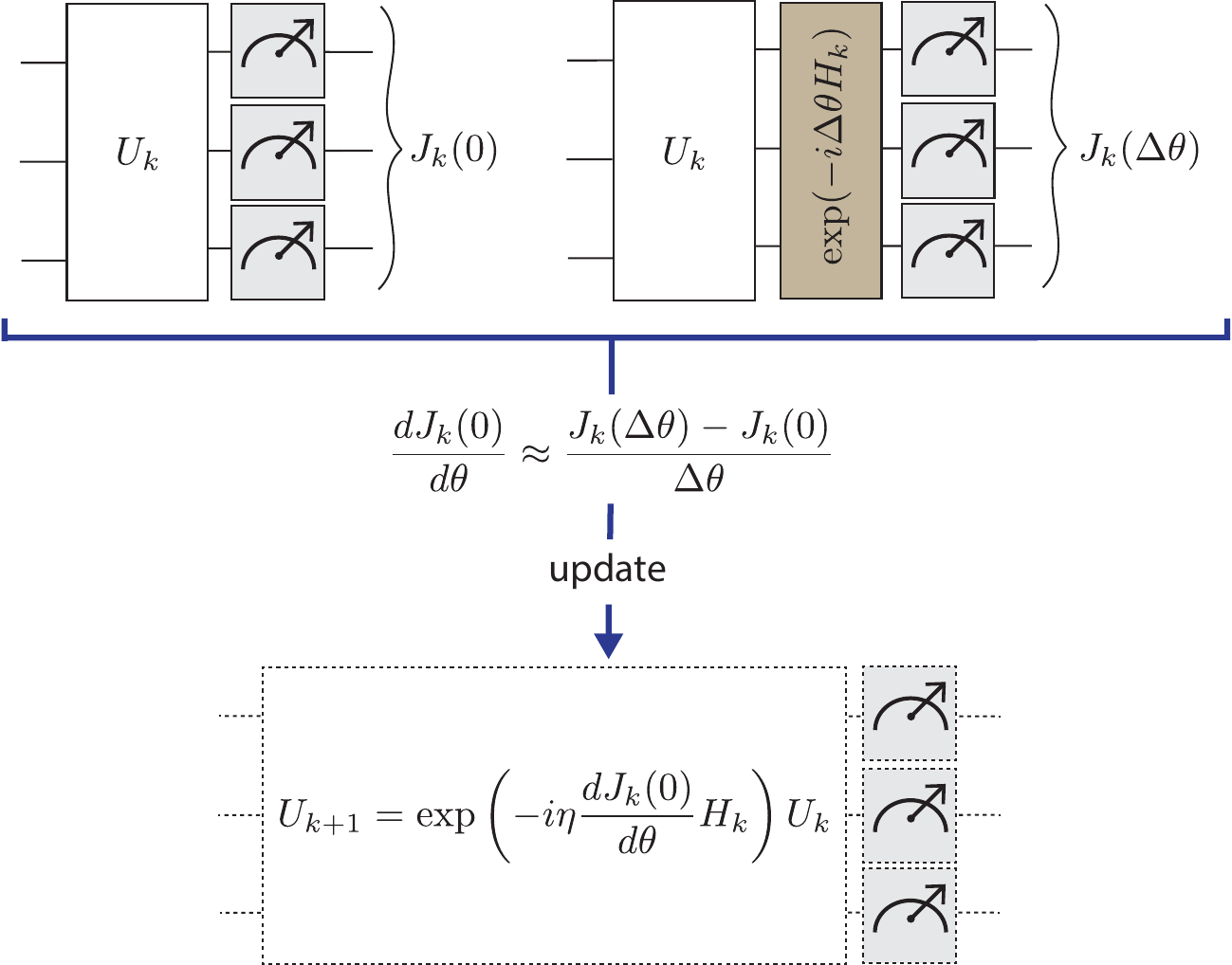}
\caption{Schematic implementation 
of the discretized Riemannian gradient scheme by a quantum algorithm in case the {manifold is $M=\SU(2^n)$.
Thus} the dimension of the problem scales exponentially with the number $n$ of quantum particles {(qubits)}. 
Each quantum circuit {$U_{k} \in \SU(2^n)$} is adaptively turned into $U_{k+1}=\exp\left( \frac{-i\eta\, dJ_{k}(0)}{d\theta}H_{k}\right)U_{k}$ by moving in a random tangent-space direction $iH_{k}$ whose corresponding projection $\frac{dJ_{k}(0)}{d\theta}=\langle \text{grad}J(U_{k}),iH_{k}U_{k}\rangle$ is measured on a quantum device.
Here $\eta$ denotes the step size and {$J_{k}(\theta)$ 
the cost function $\theta \mapsto J_k(\theta) := \mathrm{tr}(A\exp(-i \theta H_{k})U_{k}\rho U_{k}^{*} \exp(i \theta H_{k}))$} of the $k$-th {iteration.}
The {measurement
of $J_k(\theta)$} is done on the quantum computer, and the statistics of repeated measurements
{to obtain the expectation value is done on a classical computer}.  
In this way a randomized gradient flow as in Fig.~\ref{fig:intro1} is implemented.
In this work we show that for sufficiently small $\eta$ such a (discretized) flow converges almost surely to the global optimum.\\\\}
\label{fig:intro2}
\end{figure}

{To continue the mathematical perspective of `ground state' optimization,
i.e. of finding the smallest eigenvalue (`ground state energy') of a Hermitian operator $A$ (Hamiltonian) that describes the energy of the system,} we leave the general notation $f:M\to\mathbb R$ above and specialize to the cost function over the unitary group (or orbit) $J: \U(d)\to\mathbb R$ we aim to minimize. 
For finding the {smallest eigenvalue or, more generally, the} smallest expectation value of $A$ with respect to the initial state $\rho$, the cost function is given by 
\begin{equation}\label{eq:cost-J}
J(U):=\operatorname{tr}(A U\rho U^*)=:\langle A, \Ad_U(\rho)\rangle, 
\end{equation}
where $\rho$ is a density matrix representing the initial state of the system and $U$ is a unitary transformation that describes a quantum circuit (writing $U^*$ for the complex conjugate transpose).

By choosing an appropriate eigenbasis of $A$, we may assume that $A$ is diagonal with eigenvalues in non-increasing order. 
The function we want to optimize is of the form 
of Eqn.~\eqref{eq:cost-J}.
Many properties of this function can be found in~\cite{Duistermaat83} in the more general setting of semisimple Lie groups. We recall the relevant properties here, adapted to our setting:

\medskip
\begin{proposition}
Assume that $\rho$ and $A$ are diagonal\,\footnote{This amounts to a shift in the function {$J$ which results from an appropriate choice of $U := U_0^*V_0$ 
and the identity $\operatorname{tr}(A (U_0^*V_0)\rho (U_0^*V_0)^*) = \operatorname{tr}(U_0 A U_0^* V_0\rho V_0^*)$}.}. 
Then the following hold.
\begin{enumerate}[(i)]
\item \label{it:critical} $U$ is a critical point of $f$ if and only if $[A,\Ad_U(\rho)]=0$. Hence, the critical set of {$J$} is equal to 
$$
\SU(d)_A \,\mc S_d\, \SU(d)_\rho,
$$
where $\mc S_d$ is the set of $d$ by $d$ permutation matrices {and $\SU(d)_A$, $\SU(d)_\rho$ denote the stabilizer subgroups for the action of conjugation}.
In particular, it is a disjoint union of finitely many compact connected submanifolds.
\item \label{it:hessian} The Hessian at a critical point $U$ is determined by
$$
\frac{d^2}{dt^2}\langle A,\Ad_{e^{itH}}(\rho)\rangle
=
\sum_{i>j} 2(a_i-a_j)(\rho_i-\rho_j)|\langle i|\Ad_U(H)|j\rangle|^2,
$$
and in particular {$J$} is Morse--Bott.\footnote{Note that the function {$J$} is never Morse, since the maximal torus in $\SU(d)$ stabilizes $A$ and $\rho$.}
\item \label{it:optima} There is only one local minimal (resp. maximal) value.
\end{enumerate}
\end{proposition}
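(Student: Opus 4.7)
The plan is to establish (i), (ii), (iii) in sequence, each building on the previous.

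For (i) I would differentiate $J(e^{itH}U) = \tr(A\,e^{itH}\Ad_U(\rho)e^{-itH})$ at $t=0$ for traceless Hermitian $H$ and use cyclicity of the trace to arrive at
\[
\tfrac{d}{dt}\Big|_{t=0} J(e^{itH}U) = -i\,\tr\big([A,\Ad_U(\rho)]\,H\big).
\]
Since $[A,\Ad_U(\rho)]$ is anti-Hermitian, this linear functional of $H$ vanishes for every such $H$ iff $[A,\Ad_U(\rho)] = 0$. For the structural statement I would simultaneously diagonalize $A$ and $\Ad_U(\rho)$: since the spectrum of $\Ad_U(\rho)$ equals that of $\rho$, there exist $K_A \in \SU(n)_A$ and a permutation matrix $P$ such that $\Ad_U(\rho) = \Ad_{K_A P}(\rho)$, which forces $P^{-1}K_A^{-1}U \in \SU(n)_\rho$, hence $U \in \SU(n)_A \cdot \mc S_n \cdot \SU(n)_\rho$; the converse inclusion is immediate. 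Compactness and connectedness of each double coset (as the continuous image of the compact connected set $\SU(n)_A \times \SU(n)_\rho$) together with $|\mc S_n| < \infty$ give the claimed decomposition.

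For (ii) I plan to compute the second derivative along the same curve,
\[
\tfrac{d^2}{dt^2}\Big|_{t=0} J(e^{itH}U) = -\tr\big([A,H]\,[H,\Ad_U(\rho)]\big),
\]
and then expand in the joint eigenbasis of $A$ and $\Ad_U(\rho)$ (which exists by the critical-point equation of (i)) to obtain the claimed quadratic form in the matrix entries of $\Ad_U(H)$, the $\rho_i$ denoting the suitable permutation of the eigenvalues of $\rho$. To verify the Morse--Bott condition I would then show that the kernel of this Hessian coincides with the tangent space $\mf{su}(n)_A + \Ad_U\big(\mf{su}(n)_\rho\big)$ to the critical submanifold: an element $H$ lies in the kernel iff $\Ad_U(H)$ is supported on index pairs $(i,j)$ with $a_i = a_j$ or $\rho_i = \rho_j$, and any such element splits (non-canonically) as a sum of an element commuting with $A$ and one commuting with $\Ad_U(\rho)$; the reverse inclusion is immediate from the same block-support description.

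For (iii) I would invoke the rearrangement inequality of Hardy--Littlewood--P\'olya. The value of $J$ on the critical component indexed by a permutation $\pi$ is $\sum_i a_i\,\rho_{\pi(i)}$, yielding only finitely many critical values; among these, the minimum value is attained precisely when $\pi$ pairs the $a_i$ and $\rho_i$ in opposite monotone order, and the maximum when in the same order. For any non-optimal $\pi$ there exist indices where this pairing fails to be monotone, and plugging the corresponding elementary Hermitian direction into the Hessian formula from (ii) yields a sign that certifies a strict saddle; hence the only critical components that can be local minima are those realizing the rearrangement-minimum value, and symmetrically for the maximum.

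The step I expect to need the most care is the matching of the Hessian kernel with the tangent space of the critical submanifold in (ii), especially when $A$ or $\rho$ have degenerate eigenvalues: the parameterization $(K_A,K_\rho)\mapsto K_A U K_\rho$ is then no longer locally injective, and one has to verify that the resulting orbit is still a smooth manifold of the dimension predicted by the Hessian-kernel description. Beyond this, the main remaining effort is careful but routine, modulo a consistent bookkeeping of signs between the left- and right-invariant parameterizations of tangent vectors in $\SU(n)$.
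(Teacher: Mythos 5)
Your proof is correct, but it takes a genuinely different route from the paper: the paper's ``proof'' of this proposition consists entirely of citations to Lemma~1.1, Propositions~1.2--1.4, Corollary~1.5, and Remark~1.6 of Duistermaat, Kolk, and Varadarajan (1983), whereas you give a self-contained derivation from first principles. Your computation for (i) and (ii) is the standard trace/cyclicity calculation and matches what the cited reference proves in the more general semisimple setting. Two small bookkeeping points worth noting. First, a sign: direct computation along $t\mapsto e^{itH}U$ gives $\frac{d^2}{dt^2}\big|_{0}J=-\tr\big([A,H][H,\Ad_U(\rho)]\big)=-\sum_{i>j}2(a_i-a_j)(b_i-b_j)\lvert K_{ij}\rvert^2$ with $K=\Ad_U(H)$ and $b_i$ the diagonal of $\Ad_U(\rho)$; the formula as printed in the proposition is missing this overall minus sign (your version has it). Second, the stabilizers appearing in the statement are in $\SU(n)$ while the simultaneous-diagonalization argument most naturally produces elements of $\mathrm U(n)$ together with permutation matrices of determinant $\pm1$; this is easily repaired by absorbing scalars, but you should flag it. Regarding (ii), your split of a kernel element $K$ into $K_1$ supported on pairs with $a_i=a_j$ and $K_2=K-K_1$ (then necessarily supported on pairs with $b_i=b_j$) is exactly right, and after conjugating back by $U$ you recover the tangent space of the double coset $\SU(n)_A\,U\,\SU(n)_\rho$, so Morse--Bott holds; your worry about local non-injectivity of $(K_A,K_\rho)\mapsto K_AUK_\rho$ in the degenerate case is resolved precisely because the double coset is an orbit of the compact group $\SU(n)_A\times\SU(n)_\rho$ acting on $\SU(n)$, hence automatically an embedded submanifold, and your tangent-space computation confirms the dimension matches. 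For (iii), the rearrangement-inequality plus Hessian-sign argument is exactly the right idea and is essentially what Duistermaat et al.\ do in Remark~1.6; when $a$ or $\rho$ has repeated eigenvalues one should observe that permutations differing only inside a degenerate block give the same critical value (indeed the same critical component), so one can always find a genuine inversion $(i,j)$ with $a_i\neq a_j$ and $b_i\neq b_j$ at any non-optimal component. Overall, your proof buys self-containedness and concreteness at the cost of the generality (general compact or semisimple groups, arbitrary flag manifolds) available in the cited reference.
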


\begin{proof}
\ref{it:critical}: See Lem.~1.1 and Props.~1.2 and~1.3 of~\cite{Duistermaat83}. 
\ref{it:hessian}: See Prop.~1.4 and Cor.~1.5 of~\cite{Duistermaat83}. 
\ref{it:optima}: See Rmk.~1.6 of~\cite{Duistermaat83}. 
\end{proof}

A key tool for the experimental implementation of the randomized gradient descent algorithm on quantum hardware are unitary $t$-designs, whose purpose is to efficiently implement random unitaries.

\medskip
\begin{definition}[Thm.\ 3 in \cite{GrAuEi07}]
\label{def:t-design}
A unitary representation $\pi:G \to\U(d)$, $g \mapsto \pi(g) =: U_g$ of a finite group $G$ is called a \emph{unitary $t$-design} with $t \in \mathbb{N}_0$ if one of the following equivalent conditions is satisfied:
\begin{enumerate}
\item[(a)]
For all polynomials $p \in \mathbb{C}_{t}[Z,\bar{Z}]$ one has the equality
\begin{equation*}
    \frac{1}{|G|}\sum_{g \in G} p(U_g,\bar{U}_g) =
\int_{\mathrm U(d)} p(U,\bar{U}) dU\,,
\end{equation*}
where $\mathbb{C}_t[Z,\bar{Z}]$ denotes the set of all polynomials in $Z := (z_{ij})_{i,j = 1, \dots d}$ and $\bar{Z} :=(\bar{z}_{ij})_{i,j = 1, \dots d}$, which are $t$ homogeneous in $Z$ and $\bar{Z}$., i.e.%
\footnote{This is equivalent to  $p(\lambda Z, \bar\lambda\bar{Z})=|\lambda|^{2t} p(Z,\bar{Z})$.}
with 
$p(\lambda Z,\mu\bar{Z})=\lambda^t\mu^t p(Z,\bar{Z})$.
\item[(b)]
For all $H \in \mathbb{C}^{d^t \times d^t}$ one has the equality
\begin{equation*}
\frac{1}{|G|}\sum_{g \in G}
\big(U_g \otimes \dots \otimes U_g\big) H
\big(U_g \otimes \dots \otimes U_g\big)^* = \int_{\mathrm U(d)} 
\big(U \otimes \dots \otimes U\big) H
\big(U \otimes \dots \otimes U\big)^* dU\,.
\end{equation*}
\item[(c)]
One has the equality
\begin{equation*}
    \frac{1}{|G|}
    \sum_{g \in G}
    \big(U_g \otimes \dots \otimes U_g\big) \otimes \big(\bar{U}_g \otimes \dots \otimes \bar{U}_g\big) 
    = \int_{\mathrm U(d)} 
    \big(U \otimes \dots \otimes U\big)
    \otimes \big(\bar{U} \otimes \dots \otimes \bar{U}\big) dU\,.
\end{equation*}
\end{enumerate}
For $t = 2$ one has the further equivalence \medskip
\begin{enumerate}
\item[(d)]
The tensor square representation $\pi \otimes \pi: G \to \U(d) \otimes \U(d)$, $g \mapsto U_g \otimes U_g$ acting on
$\mathbb{C}^d \otimes \mathbb{C}^d$ has exactly two irreducible components, namely $\mathrm{Sym}(\mathbb{C}^d \otimes \mathbb{C}^d)$ and
$\mathrm{Alt}(\mathbb{C}^d \otimes \mathbb{C}^d) := \mathbb{C}^d \wedge \mathbb{C}^d$.
\end{enumerate}
\end{definition}

\medskip
\begin{remark}
A  more general notion of $t$-designs  \cite{DankertMsc05, dankert2009exact} (see
also \cite{GrAuEi07})
allows any finite subset of $\U(d)$ which satisfies (a), (b) or, equivalently,(c). However, since almost all known examples are constructed via group representations, we focus here on this restricted approach which is sometimes called group design. Moreover, the reader should note 
the following to facts: (i) One can  assume without loss of generality that $\pi$ is faithful (i.e., one-to-one) because it is
straightforward to show that $[g] \mapsto \pi(g)$, $[g] \in G/\operatorname{ker} \pi$
yields a $t$-design whenever $\pi$ is a $t$-design.
(ii) Every $t$-design is also $t'$-design for 
$t' \leq t$. This follows readily from
condition (b) by choosing $H$ of the form
$H' \otimes I_n \otimes \cdots \otimes I_n$.
\end{remark}
\medskip 

Now we need the following technical lemma:

\medskip
\begin{lemma}
\label{lem:t-design-1}
Let $\pi:G \to \U(d)$ be any unitary representation. Then $\pi$ yields a $2$-design in the sense of Def.~\ref{def:t-design} if and only if
$\pi$ acts irreducibly on $i \mf{su}(n)$
via conjugation, i.e.~the representation 
$\hat{\pi}:G \to U\big(i \mf{su}(d)\big)$,        $\pi(g)\,H := U_g H U_g^*$
is irreducible.
\end{lemma}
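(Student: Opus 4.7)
The plan is to translate the $2$-design property through a sequence of equivalent commutant identities, eventually arriving at the stated irreducibility of the conjugation action on $i\mf{su}(n)$.

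First I would exploit condition~(b) of Definition~\ref{def:t-design}: $\pi$ is a $2$-design iff the group-averaged conjugation on $\operatorname{End}(\mathbb{C}^n\otimes\mathbb{C}^n)$ equals the $\mathrm{U}(n)$-averaged one. Each average is the orthogonal projection onto the corresponding commutant, so $2$-design amounts to $\operatorname{End}_G(\mathbb{C}^n\otimes\mathbb{C}^n)=\operatorname{End}_{\mathrm{U}(n)}(\mathbb{C}^n\otimes\mathbb{C}^n)$. By Schur--Weyl duality (equivalently, condition~(d) applied to $\mathrm{U}(n)$ itself), the right-hand side is the two-dimensional algebra spanned by the identity and the flip operator, so the $2$-design property reduces to $\dim\operatorname{End}_G(\mathbb{C}^n\otimes\mathbb{C}^n)=2$.

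Second I would transport this commutant to the conjugation representation. The canonical $G$-equivariant isomorphism $(\mathbb{C}^n\otimes\mathbb{C}^n)\otimes(\mathbb{C}^n\otimes\mathbb{C}^n)^*\cong\operatorname{End}(\mathbb{C}^n)\otimes\operatorname{End}(\mathbb{C}^n)$ yields $\operatorname{End}_G(\mathbb{C}^n\otimes\mathbb{C}^n)\cong\operatorname{End}_G(\operatorname{End}(\mathbb{C}^n))$. Since conjugation preserves the trace, $\operatorname{End}(\mathbb{C}^n)=\mathbb{C}\,I\oplus\mf{sl}(n,\mathbb{C})$ splits $G$-invariantly; the trivial summand contributes $1$ to the commutant dimension, so the $2$-design condition becomes complex irreducibility of the conjugation action of $G$ on $\mf{sl}(n,\mathbb{C})$.

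Finally I would match this with real irreducibility on $i\mf{su}(n)$. The antilinear $G$-equivariant involution $\tau(X):=X^*$ has $i\mf{su}(n)$ as its fixed-point set and realises $\mf{sl}(n,\mathbb{C})$ as the complexification of $i\mf{su}(n)$. In the forward direction, any proper real $G$-subspace $A\subseteq i\mf{su}(n)$ complexifies to the proper complex $G$-subspace $A+iA$ of $\mf{sl}(n,\mathbb{C})$. For the converse, given a proper complex $G$-subspace $W$, the subspace $\tau(W)$ is again a complex $G$-subrepresentation, and the $\tau$-stable $G$-subspaces $W\cap\tau(W)$ and $W+\tau(W)$ decompose into Hermitian and skew-Hermitian parts, from which one extracts a proper real $G$-subspace of $i\mf{su}(n)$.

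The main obstacle is this last equivalence, because a real-irreducible representation can a priori complexify into a complex-irreducible representation and its conjugate (``complex type''). Excluding this degenerate case requires the specific structure of $\tau$, in particular that it preserves the $G$-invariant Hilbert--Schmidt inner product, so that orthogonality arguments combined with a case analysis on whether $W\cap\tau(W)$ is trivial suffice to produce the desired proper real subspace.
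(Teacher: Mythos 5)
Your plan mirrors the paper's argument quite closely, with only a cosmetic change in packaging: you go through condition (b) of Def.~\ref{def:t-design} together with Schur--Weyl duality and a tensor-regrouping isomorphism to identify $\operatorname{End}_G(\mathbb{C}^n\otimes\mathbb{C}^n)$ with $\operatorname{End}_G(\operatorname{End}(\mathbb{C}^n))$, whereas the paper goes through condition (d) and exhibits an explicit partial-transpose map $\Phi\colon A\otimes B\mapsto A\otimes B^{\T}$ intertwining $\operatorname{Ad}_{U_g\otimes\bar U_g}$ and $\operatorname{Ad}_{U_g\otimes U_g}$, hence matching the two commutants. These are two ways of saying the same thing (the commutants of $\pi\otimes\pi$ and of $\pi\otimes\bar\pi$ have equal dimension), and both reduce the $2$-design property to ``$\mathbb{C} I_n$ and $\mathfrak{sl}(n,\mathbb{C})$ are the only irreducible components of the conjugation representation on $\operatorname{End}(\mathbb{C}^n)$.''

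The more substantive point is your final step, and there you are actually more careful than the source. The paper passes from ``$\mathfrak{sl}(n,\mathbb{C})$ is $\mathbb{C}$-irreducible'' to ``$i\mathfrak{su}(n)$ is $\mathbb{R}$-irreducible'' with a bare ``or, equivalently'' and no justification, whereas you explicitly identify the nontrivial direction and the obstruction: a real-irreducible module can a priori complexify to something reducible (complex or quaternionic type), so showing that reducibility of $\mathfrak{sl}(n,\mathbb{C})$ forces reducibility of $i\mathfrak{su}(n)$ genuinely requires using the extra structure. Your case analysis on $W\cap\tau(W)$ and $W+\tau(W)$ handles the cases where one of these is a proper nonzero $\tau$-stable subspace, but, as you yourself concede, it does \emph{not} dispose of the degenerate case $W\cap\tau(W)=0$, $W+\tau(W)=\mathfrak{sl}(n,\mathbb{C})$; in that case $i\mathfrak{su}(n)$ is $\mathbb{R}$-isomorphic to $W$ viewed over $\mathbb{R}$, which is $\mathbb{R}$-irreducible, so the argument as sketched produces no proper subspace. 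Note that this case can only arise when $n$ is odd (so that $\dim_\mathbb{R} i\mathfrak{su}(n)=n^2-1$ is even), and ruling it out does require more than the Hilbert--Schmidt orthogonality you mention --- one must use the specific self-duality of the conjugation action (the $G$-invariant symmetric trace form $B(X,Y)=\operatorname{tr}(XY)$ and its compatibility with $\tau$) to force the relevant isotypic component to be $\tau$-stable. So your proposal has the same structure as the paper's proof, correctly flags the only nontrivial step that the paper asserts without argument, but does not itself supply a complete proof of that step.
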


\begin{proof}
First, $\hat{\pi}$ obviously ``extends''
to $\tilde{\pi}: G \to U\big(\mathbb{C}^{d\times d}\big)$, $\tilde{\pi}(g)\,A := 
U_g A U_g^{-1}$. Then a straightforward computation shows that the representations $\tilde{\pi}$ and $g \mapsto U_g \otimes \bar U_g$ are equivalent and thus for simplicity we will use the same symbol 
for both representations.
Next, we investigate the commutant of the representations $U_g \otimes \bar U_g$ and $U_g \otimes U_g$ in 
$\mathbb{C}^{d\times d} \otimes \mathbb{C}^{d\times d}\cong \mathbb{C}^{d^2 \times d^2}$,
i.e.~we are interested in the solutions $Z$ of
\begin{equation}
\label{eq:comm-1}
    (U_g \otimes \bar U_g) Z = Z (U_g \otimes \bar U_g) \quad \text{for all} \quad g \in G\,,
\end{equation}
and
\begin{equation}
\label{eq:comm-2}
    (U_g \otimes U_g) Z = Z (U_g \otimes U_g) \quad \text{for all} \quad g \in G\,,
\end{equation}
respectively. Eq.~\eqref{eq:comm-1} and \eqref{eq:comm-2} are equivalent to $\Ad_{U_g \otimes \bar U_g} (Z) = Z$ and $\Ad_{U_g \otimes U_g} (Z) = Z$, respectively. Finally, a tedious but straightforward computation shows that the partial transposed operator $\Phi: \mathbb{C}^{d \times d} \otimes \mathbb{C}^{d \times d} \to \mathbb{C}^{d \times d} \otimes \mathbb{C}^{d \times d}$, $A \otimes B \mapsto A \otimes B^T$ yield an intertwining map for
$\Ad_{U_g \otimes \bar U_g}$ and $\Ad_{U_g \otimes U_g}$, i.e.
$$\Ad_{U_g \otimes \bar U_g} \circ\, \Phi = \Phi \circ  \Ad_{U_g \otimes U_g}\,.$$
This implies $\Phi$ maps the commutator of 
$\tilde{\pi}$ to the commutator of 
$\pi \otimes \pi$ and therefore the dimensions of commutators (and consequently the number of irreducible subspaces) coincide. Thus $\mathrm{Sym}(\mathbb{C}^d \otimes \mathbb{C}^d)$ and
$\mathrm{Alt}(\mathbb{C}^d \otimes \mathbb{C}^d)$ are the only irreducible subspaces of $\pi \otimes \pi$ if and only if $\mathbb{C} \, I_d$ and $\mathfrak{sl}_0(d)$ are the only irreducible subspaces to $\tilde{\pi}$ (or, equivalently, $i\mathfrak{su}(d)$ is the only irreducible subspace of $\hat{\pi}$).
\end{proof}

\begin{remark}
Note that the above result does not mean that the tensor square representation $\pi \otimes \pi$ and $\tilde{\pi}$ are equivalent and, in fact, they are not --- as easy examples demonstrate. 
A similar result for Lie algebra representations was elaborated on in~\cite{ZZ15} as follow-up to~\cite{ZS11} and in particular to~\cite{CZ11}.
\end{remark}

The following is the main result of this section, showing that unitary $2$-designs satisfy Assumption~\ref{assump}. As a consequence, the main convergence result (Thm.~\ref{thm:almost-sure-convergence}) applies to the problem considered in this section.

\medskip
\begin{proposition}
\label{prop:t-desing-2}
Let $\pi: G \to \mathrm U(n)$ be a unitary $t$-design with $t \geq 2$ and let $H_s\in i\mf{su}(n)$ be a traceless Hermitian matrix.
Then any subset of $\{U_g H_s U_g^*\}_{g \in G}$, where one element is removed, still spans $i\mf{su}(n)$.
\end{proposition}

\begin{proof}
By
Lem.~\ref{lem:t-design-1} we know that
the $\hat{\pi}$-invariant subspace spanned by $\{U_g H_s U_g^* : g \in G\}$ has to coincide with $i\mf{su}(d)$ and thus $\{U_g H_s U_g^* : g \in G\}$ contains a basis of $i\mf{su}(d)$. 
Moreover, by the defining property (b) it is straightforward to see that every $1$-design, and therefore also every $2$-design, has to act irreducibly on $\mathbb{C}^d$. 
Thus we conclude
$$
\sum_{g \in G} U_g H_s U_g^* = 0
$$
because the left hand side of the above equation belongs to the commutant of $\pi$ and has trace zero.
Now if there is some $g_0 \in G$ such that $\{U_g H_s U_g^* : g \in G\,,g \neq g_0\}$ does not span $i\mf{su}(d)$, then $U_{g_0} H_s U_{g_0}^*$ does not lie in the span of the others, and hence $\sum_{g \in G} U_g H_s U_g^*\neq0$, which contradicts the above.
\end{proof}

\medskip
\begin{corollary}
The randomized gradient descent algorithm introduced in Sec.~\ref{sec:algorithm-convergence}, applied to the cost function~\eqref{eq:cost-J} on the unitary group, with randomly chosen directions $\{U_g H_s U_g^*\}_{g \in G}$ as in Prop.~\ref{prop:t-desing-2}, converges almost surely to a global minimum.
Moreover, by continuity, this continues to hold if $U_g$ is an approximate $2$-design, as long as the approximation is good enough.
\end{corollary}

\subsubsection*{On the Orbit}

Since the map $J:\mathrm{SU}(d)\to\mathbb R$, which depends on the initial state $\rho$, is invariant under the stabilizer subgroup of $\rho$, denoted $\mathrm{SU}(d)_\rho$, the map can be factored through the corresponding quotient space. 
Equivalently, one may define the map on the unitary orbit of the initial state $\rho$.
We denote this map by $\bar J:\mathrm{SU}(d)\rho\to\mathbb R$.
Working with this function has some theoretical advantages, such as the reduced dimension of the state space. Moreover, in many cases the function is even Morse:

\medskip
\begin{lemma}
If $A$ is non-degenerate, then the function $\bar J$ is Morse.
\end{lemma}

\begin{proof}
See~\cite[Coro.~3.7, Sec.~4]{Duistermaat83}.
\end{proof}

Despite the nice properties of $\bar J$, we do not use this formulation, since it is not clear how to efficiently sample random unitaries yielding directions in the tangent space of the orbit.

\section{Conclusion, Discussion, and Outlook}

We have analyzed the convergence properties of a recently introduced Haar-randomly projected gradient descent algorithm for cost functions taking the form of a smooth Morse--Bott function (with compact sublevel sets) on a Riemannian manifold.
For making it efficient in quantum optimizations, one can approximate the Haar-random projections via unitary 2-designs.
For both scenarios 
we have proven that (i)
the respective algorithm {\em almost surely escapes saddle points} (Lem.~\ref{lemma:saddle-as-gen})
and 
(ii) it {\em almost surely converges to a local minimum} 
(Thm.~\ref{thm:almost-sure-convergence}). 
Moreover, we have studied the time required by the algorithm to pass a saddle point in a simple two-dimensional setting (Sec.~\ref{sec:numerics}).
Note that unlike adiabatic ground-state preparation strategies (that rest on first preparing some desired initial state to find the ground state of some target Hamiltonian), our randomized Riemannian gradient flow algorithm does not require knowledge of the initial state, as the algorithm converges for almost all initial points. 
Furthermore, as in the quantum setting for ground state problems the critical points just comprise saddles and global extrema,
here our result implies almost sure convergence to the {\em global minimum}.  

However, our approach inherits a key problem already arising without projections: the overall speed of convergence to globally optimal solutions may be slow 
\cite{magann2023randomized},
since it depends on the scaling of the magnitude of the (projected) Riemannian gradient. 
For Riemannian optimizations over the unitary group, 
the gradient magnitude  
converges to its expectation value (being zero), 
while the variance is inversely proportional to the dimension $d=2^{n}$ \cite{mcclean2018barren}. 
Thus the probability for a gradient magnitude  larger than the noise level of a quantum computer decays exponentially with the number of qubits $n$.
Well known as {\em barren plateaux problem} in quantum optimization \cite{mcclean2018barren}, such exponentially flat regions 
constitute a main challenge 
to all variational quantum algorithms 
in high dimensions on quantum computers. 

The algorithmic steps analyzed here are entirely modular. In view of future applications, the (approximate) random projections w.r.t.\ the Haar measure may well be replaced by selections from other problem-adapted measures without sacrificing the convergence properties. In particular, one may wish to select the measures such that the projections of the gradient do not subside in numerical noise prematurely and thus circumvent the notorious barren plateaux problem. Moreover, one may consider random projections into subspaces by techniques of compressed sensing and shadow tomography to better approximate the full gradient flow. Likewise,  approximations with tensor-network methods (such as, e.g., \cite{Verstraete16,Verstraete19}) may be envisaged as long as one remains on a Riemannian manifold.
More specifically, see \cite{Barthel23} for the connection of trotterized MERA tensor networks with Riemannian optimization (extended to quasi-Newton) of VQAs in view of hybrid quantum computing. {Another interesting recent application, which may potentially profit from the random methods described here, is
Riemannian quantum circuit optimization based on matrix product operators as suggested by~\cite{Mendl25}.}

Needless to say, the randomized gradient techniques presented here in this paper lend themselves to be taken over to higher-order quasi Newton methods (like L-BFGS) as described in~\cite{SGDH08} also on quantum computers.

Thus we anticipate that the convergence guaranteed for (randomized) Riemannian gradient flows with respect to Morse--Bott type smooth cost functions will encourage wide application in and beyond quantum optimization.

\section*{Acknowledgements}

E.M. and T.S.H. are supported by the 
{\em Munich Center for Quantum
Science and Technology} (MCQST) and the {\em Munich Quantum Valley} (MQV)
with funds from the Agenda Bayern Plus.
C.A. acknowledges support from the National Science Foundation (Grant No. 2231328) and Knowledge Enterprise at Arizona State University. 

\section*{Conflicts of Interest}

All authors declare that they have no conflicts of interest.

\bigskip

\appendix 

\section*{Appendix}

\section{Some Technical Results}

The probability distributions of $u_N$ and $u_N^2$ are important for understanding the behavior of the algorithm. 
Fortunately, they can be described quite easily using the beta distribution. 
Recall that the p.d.f. of the beta distribution with parameters \mbox{$a,b>0$} is given by $f(x) = \mathrm{const}\cdot x^{a-1}(1-x)^{b-1}$.

\medskip
\begin{lemma} \label{lemma:beta-distr}
If $\mbf u\sim\mathcal U(S^{N-1})$ and $u_N$ is the last coordinate, then
$$
u_N\sim\mathcal B_N := 2\,\mathrm{Beta}\left(\frac{N-1}{2},\frac{N-1}{2}\right)-1,
\quad 
u_N^2\sim\mathcal B_N^2 := \mathrm{Beta}\left(\frac12,\frac{N-1}2\right),
$$ 
where $\mathrm{Beta}(a,b)$ denotes the beta distribution, and hence
$$
\mathbb E[u_N^2] = \frac{1}{N}, \quad
\operatorname{Var}(u_N^2) = \frac{2(N-1)}{N^2(N+2)}.
$$
\end{lemma}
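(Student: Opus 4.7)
The plan is to derive the density of $u_N$ from an elementary slicing argument on the sphere and then deduce the claim about $u_N^2$ either by a direct change of variables or, more cleanly, through the Gaussian representation $u = Z/\|Z\|$ with $Z \sim \mathcal N(0,I_N)$. The moment formulas for $u_N^2$ then follow immediately from standard formulas for the Beta distribution.

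First I would show that the density of $u_N$ is proportional to $(1-t^2)^{(N-3)/2}$ on $[-1,1]$. This is a direct co-area computation: the level set $\{u \in S^{N-1} : u_N = t\}$ is an $(N-2)$-sphere of radius $\sqrt{1-t^2}$, so its $(N-2)$-volume is proportional to $(1-t^2)^{(N-2)/2}$, and the gradient on the sphere of the projection $u \mapsto u_N$ has norm $\sqrt{1-t^2}$, contributing a Jacobian factor $(1-t^2)^{-1/2}$. After the substitution $s = (t+1)/2$, one has $1-t^2 = 4s(1-s)$, so the density of $s$ is proportional to $s^{(N-3)/2}(1-s)^{(N-3)/2}$, which is exactly $\mathrm{Beta}((N-1)/2,(N-1)/2)$. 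This proves the first distributional claim.

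Next, for $u_N^2$, the quickest route uses the Gaussian representation: writing $u = Z/\|Z\|$, we have
\begin{equation*}
u_N^2 = \frac{Z_N^2}{Z_N^2 + \sum_{i<N} Z_i^2},
\end{equation*}
where $Z_N^2 \sim \chi_1^2$ and $\sum_{i<N} Z_i^2 \sim \chi_{N-1}^2$ are independent. The classical identity that, for independent $X \sim \chi_a^2$ and $Y \sim \chi_b^2$, the ratio $X/(X+Y)$ follows $\mathrm{Beta}(a/2, b/2)$ then yields $u_N^2 \sim \mathrm{Beta}(1/2,(N-1)/2)$. Alternatively, one may transform the density of $u_N$ under $s = u_N^2$ (summing over the two signs and including the Jacobian $(2\sqrt{s})^{-1}$) to obtain the same density $s^{-1/2}(1-s)^{(N-3)/2}$.

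Finally, the moment formulas follow at once from the standard identities $\mathbb E[X] = a/(a+b)$ and $\operatorname{Var}(X) = ab/((a+b)^2(a+b+1))$ for $X\sim \mathrm{Beta}(a,b)$, specialized to $a=1/2$, $b=(N-1)/2$. The computation is essentially bookkeeping; there is no substantive obstacle. The only place where one has to be a little careful is the co-area factor in the first step and the sign-doubling in the change of variables $u_N \mapsto u_N^2$, both of which are straightforward once stated.
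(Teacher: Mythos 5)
Your proposal is correct, and the central step for $u_N^2$ is exactly the paper's: represent $u = Z/\|Z\|$ with $Z \sim \mathcal N(0,I_N)$, so $u_N^2 = Z_N^2/(Z_N^2 + \sum_{i<N}Z_i^2)$, and invoke the fact that for independent $X\sim\chi^2_a$, $Y\sim\chi^2_b$ one has $X/(X+Y)\sim\mathrm{Beta}(a/2,b/2)$; the moments then follow from the standard Beta moment formulas. Where you depart from the paper is in the derivation of the law of $u_N$ itself: you compute it directly by a co-area/slicing argument on $S^{N-1}$ (level sets are $(N-2)$-spheres of radius $\sqrt{1-t^2}$, co-area Jacobian $\sqrt{1-t^2}$, yielding density $\propto (1-t^2)^{(N-3)/2}$, then affinely rescaling to the unit interval). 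The paper instead goes in the reverse direction, obtaining $u_N^2$ first and then deducing the density of $u_N$ from that of $u_N^2$ together with the sign symmetry. Your route has the mild advantage of being geometrically self-contained and making the two distributional claims logically independent; the paper's route is shorter because it only ever needs the Gaussian trick once. Both are clean and correct, and your version of the change of variables (sign-doubling plus Jacobian $1/(2\sqrt s)$) agrees with the paper's symmetry argument.
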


\begin{proof}
Let $Z_1,\ldots,Z_N$ be i.i.d.\ standard normals. 
Then, a uniformly random unit vector can be obtained by normalizing the vector $(Z_1,\ldots,Z_N)\in\mathbb R^N$. 
Considering the last coordinate we see that $u_N\overset{d}{=}Z_N/\sqrt{Z_1^2+\ldots+Z_N^2}$. Hence using a well-known relation between the beta and $\chi^2$ (chi-squared) distributions%
\footnote{If $X\sim\chi^2_a$ and $Y\sim\chi^2_b$ are independent, then $\frac{X_{\phantom{j}}}{X^{\phantom{|}}+\,Y}\sim \mathrm{Beta}\left(\frac{a}2,\frac{b}2\right)$, see~\cite[Sec.~20.8 and 24.4]{Balakrishnan03}.} 
we find
$$
u_N^2 
\overset{d}{=}
\frac{Z_N^2}{(Z_1^2+\cdots+Z_{N-1}^2)+Z_N^2}
\sim 
\mathrm{Beta}\left(\frac12,\frac{N-1}2\right).
$$
Since $u_N$ is symmetrically distributed around $0$, it is easy to compute its p.d.f. starting from that of $u_N^2$, and one obtains the form given above.
The expectation and variance of $u_N^2$ follow easily from the formula for the moments of a beta distributed variable $X\sim\mathrm{Beta}(a,b)$:
$$
\mathbb E[X^k] = \prod_{r=0}^{k-1}\frac{a+r}{a+b+r}.
$$
This concludes the proof.
\end{proof}

\medskip
\begin{lemma} \label{lemma:high-dim} 
For $N\geq5$ it holds that
$$
d_K(\sqrt N\mathcal{B}_N,\mathcal N(0,1)) \leq \frac{1}{N}\,,
$$
where $d_K$ denotes the Kolmogorov distance (the supremum distance between the cumulative distribution functions).
This shows that $u_N^2$ is almost distributed according to the distribution $1/N \chi^2_1$, more precisely
$$
d_K(N (\mathcal{B}_N)^2, \chi^2_1) \leq \frac{2}{N}.
$$
If $\Phi$ denotes the cumulative distribution function of a standard normal distribution, then for any $k\geq 0$ it holds that
$$
\Pr\Big(u^2_N \geq \frac{1}{k^2 N}\Big) \geq 2\Big(1-\Phi(\tfrac{1}{k}) - \frac{1}{N}\Big).
$$
\end{lemma}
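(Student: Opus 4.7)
The plan is to establish the first (Kolmogorov) bound by a direct density-comparison argument and then to deduce the remaining two statements as routine corollaries.

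From Lemma~\ref{lemma:beta-distr}, $u_N$ has density $f_{u_N}(y)=B(1/2,(N-1)/2)^{-1}(1-y^2)^{(N-3)/2}$ on $(-1,1)$, so $\sqrt{N}\,u_N$ has density
$$
f_N(x) = \frac{1}{\sqrt{\pi N}}\,\frac{\Gamma(N/2)}{\Gamma((N-1)/2)}\,\Big(1 - \frac{x^2}{N}\Big)^{(N-3)/2}, \qquad |x|\le\sqrt{N},
$$
and zero otherwise. I would compare $f_N$ to $\phi(x)=(2\pi)^{-1/2}e^{-x^2/2}$ in two regimes. In the bulk $|x|\le N^{1/4}$, Stirling's formula gives $\sqrt{2/N}\,\Gamma(N/2)/\Gamma((N-1)/2) = 1 - 3/(4N) + O(N^{-2})$, while Taylor expanding $\tfrac{N-3}{2}\log(1-x^2/N)$ yields $(1-x^2/N)^{(N-3)/2} = e^{-x^2/2}\bigl(1 + (3x^2/2 - x^4/4)/N + O(N^{-2})\bigr)$. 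Combining,
$$
\frac{f_N(x)}{\phi(x)} = 1 + \frac{p(x)}{N} + O(N^{-2}),\qquad p(x) = -\tfrac34 + \tfrac{3x^2}{2} - \tfrac{x^4}{4}.
$$
In the tails $N^{1/4}<|x|\le\sqrt{N}$ both densities are super-polynomially small, so their contribution is negligible.

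A key structural cancellation is that $\int_{\mathbb R} p(x)\phi(x)\,dx = 0$, which means the leading-order perturbation does not shift total mass and gives $d_K = O(1/N)$ rather than $O(1/\sqrt N)$. Explicit integration in fact yields $\int_{-\infty}^a p(x)\phi(x)\,dx = a(a^2-3)\phi(a)/4$, and a short analysis of its critical points bounds the supremum comfortably below $1$. The main technical obstacle is the bookkeeping: carrying through the Stirling and Taylor expansions with explicit (non-asymptotic) remainder bounds so that one actually concludes $d_K(\sqrt{N}\,\mathcal B_N,\mathcal N(0,1))\le 1/N$ for every $N\ge 5$. The restriction $N\ge 5$ presumably reflects that the smallest cases must be verified more directly, perhaps numerically.

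Granting the first bound, the other two claims follow quickly. For the chi-squared bound, for any $t\ge 0$, writing $F_N$ for the cdf of $\sqrt{N}\,\mathcal B_N$,
$$
\bigl|\Pr(N\mathcal B_N^2\le t) - \Pr(\chi_1^2\le t)\bigr| = \bigl|[F_N(\sqrt t)-F_N(-\sqrt t)] - [\Phi(\sqrt t)-\Phi(-\sqrt t)]\bigr| \le 2\,d_K(\sqrt{N}\,\mathcal B_N,\mathcal N(0,1)) \le 2/N.
$$
For the final probability estimate, using the symmetry $\Phi(-1/k)=1-\Phi(1/k)$ and applying the Kolmogorov bound on both tails,
\begin{align*}
\Pr\bigl(u_N^2\ge\tfrac{1}{k^2N}\bigr)
&= \Pr\bigl(|\sqrt{N}\,u_N|\ge\tfrac1k\bigr) = 1 - \bigl[F_N(\tfrac1k)-F_N(-\tfrac1k)\bigr] \\
&\ge 1 - \bigl[(\Phi(\tfrac1k)+\tfrac1N)-(\Phi(-\tfrac1k)-\tfrac1N)\bigr] = 2\bigl(1-\Phi(\tfrac1k)-\tfrac1N\bigr).
\end{align*}
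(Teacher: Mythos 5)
Your treatment of the second and third inequalities is correct and is essentially identical to the paper's: squaring at most doubles the Kolmogorov distance (your two-sided CDF identity makes this explicit), and the tail bound is the same two-sided application of the Kolmogorov bound plus symmetry. No issue there.

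For the first inequality your route is genuinely different from the paper's, and it is not complete. The paper does not prove the bound $d_K(\sqrt N\mathcal B_N,\mathcal N(0,1))\le 1/N$ from scratch; it cites Pinelis' explicit non-asymptotic results on the closeness between the Student-$t$ and standard normal distributions (Pinelis 2015; Pinelis--Molzon 2016), exploiting that $\sqrt N\,u_N$ is a monotone function of a $t_{N-1}$ variable. You instead attempt a direct Edgeworth-type density comparison. Your formulas check out: the density of $\sqrt N\,u_N$ is as you write, $\int p\,\phi=0$, and the antiderivative $\int_{-\infty}^a p\,\phi\,dx=a(a^2-3)\phi(a)/4$ is correct; its supremum over $a$ is roughly $0.14$, comfortably below $1$, so the leading term alone is consistent with a $1/N$ bound with a lot of slack. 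But you stop precisely at the point where the real work begins: you split into bulk and tail, wave at "super-polynomially small" tail contributions, and declare the "bookkeeping" of explicit remainder bounds to be the obstacle. That bookkeeping \emph{is} the proof of the first inequality here. An $O(N^{-2})$ remainder with an unspecified constant does not yield a clean $\le 1/N$ bound valid for all $N\ge 5$; you would need a uniform explicit bound on the $N^{-2}$ term (e.g.\ via explicit Stirling bounds and a quantitative version of $\log(1-s)=-s-s^2/2-\dots$ on $|s|\le N^{-1/2}$), plus an explicit tail estimate, plus a check of the small cases $N=5,6,\dots$ against the resulting constant. So as written, your argument for the first display is a plausible sketch rather than a proof; either carry the remainder estimates through explicitly or, as the paper does, invoke the Pinelis bounds directly.
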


\begin{proof}
The first inequality follows from~\cite[p.~20]{Pinelis16v5} and~\cite[Thm.~1.2]{Pinelis15}. It is easy to see that taking the square of both distributions at most doubles the Kolmogorov distance, and this yields the second inequality. Finally we compute
$$
\Pr\big(u^2_N \geq \tfrac{1}{k^2 N}\big)
=
2\big(1-\Pr\big(u_N\leq\tfrac{1}{k\sqrt{N}}\big)\big)
\geq 
2\big(1-\Phi\big(\tfrac{1}{k}\big) - \tfrac{1}{N}\big),
$$
proving the third inequality.
\end{proof}

%\bibliography{general}

\end{document}